\newtheorem{normapprox}{Normal Approximation Result}
\newtheorem{normcomp1}{Normal Comparison Result}
\newtheorem{normcomp2}[normcomp1]{Normal Comparison Result}
\newtheorem{normcomp3}[normcomp1]{Normal Comparison Result}
\newtheorem{correst1}{Correlation Estimate}
\newtheorem{thm}{Theorem}[section]
\newtheorem{Con}[thm]{Conjecture}
\newtheorem{lem}[thm]{Lemma}
\newtheorem{pro}[thm]{Proposition}
\theoremstyle{definition}
\numberwithin{equation}{section}
\newcommand{\pr}{\mathbb{P}}
\newcommand{\ex}{\mathbb{E}}
\newcommand{\re}{\textup{Re}}
\newcommand{\R}{\mathbb{R}}
\newcommand{\E}{\mathbb{E}}
\newcommand{\p}{\mathbb{P}}
\newcommand{\Z}{\mathbb{Z}}
\newcommand{\C}{\mathbb{C}}
\newcommand{\var}{\mathrm{Var}}
\newcommand{\Q}{\mathbb{Q}}
\begin{document}

\baselineskip=17pt

\title[Weakly correlated random variables and prime number races]{Orderings of weakly correlated random variables, and prime number races with many contestants}

\author{Adam J Harper}
\address{Jesus College, Cambridge, CB5 8BL, England}
\email{A.J.Harper@dpmms.cam.ac.uk}
\thanks{AJH is supported by a research fellowship at Jesus College, Cambridge; and, when this work was started, by a postdoctoral fellowship from the Centre de Recherches Math\'{e}matiques, Montr\'{e}al. YL  is partially supported by a Discovery Grant from the Natural Sciences and Engineering Research Council of Canada}

\author{Youness Lamzouri}
\address{Department of Mathematics and Statistics, York University,
4700 Keele Sreet, Toronto,
 ON M3J 1P3, Canada}
\email{lamzouri@mathstat.yorku.edu}

\date{}

\begin{abstract}
We investigate the race between prime numbers in many residue classes modulo $q$, assuming the standard conjectures GRH and LI.

Among our results we exhibit, for the first time, prime races modulo $q$ with $n$ competitor classes where the biases do not dissolve when $n, q\to \infty$. We also study the leaders in the prime number race, obtaining asymptotic formulae for logarithmic densities when the number of competitors can be as large as a power of $q$, whereas previous methods could only allow a power of $\log q$.

The proofs use harmonic analysis related to the Hardy--Littlewood circle method to control the average size of correlations in prime number races. They also use various probabilistic tools, including an exchangeable pairs version of Stein's method, normal comparison tools, and conditioning arguments. In the process we derive some general results about orderings of weakly correlated random variables, which may be of independent interest.
\end{abstract}

\maketitle

\section{Introduction}

In an 1853 letter to Fuss, Chebyshev noted that on a fine scale there seem to be more primes congruent to $3$ than to $1$ modulo $4$. This observation led to the birth of comparative prime number theory,  which investigates the discrepancies in the distribution of prime numbers. A central problem is the so-called ``Shanks--R\'enyi prime number race'' which is described by Knapowski and Tur\'an~\cite{KT}: let $q\geq 3$ and $2\leq n\leq \varphi(q)$ be positive integers, (where the Euler function $\varphi(q)$ denotes the number of residue classes mod $q$ that are coprime to $q$), and denote by $\mathcal{A}_n(q)$ the set of ordered $n$-tuples $(a_1,a_2,\dots, a_n)$ of distinct residue classes that are coprime to $q$. For $(a_1,a_2,\dots, a_n)\in \mathcal{A}_n(q)$ consider a game with $n$ players called ``$a_1$'' through to ``$a_n$'', where at time $x$, the player $a_j$ has a score of $\pi(x;q,a_j)$ (where $\pi(x;q,a)$ denotes the number of primes $p\leq x$ with $p\equiv a\bmod q$). Among the questions that Knapowski and Tur\'an  asked in \cite{KT} are the following:
\begin{itemize}
\item[Q1.] Will each player take the lead for infinitely many integers $x$?
\item[Q2.] Will all $n!$ orderings of the players occur for infinitely many integers $x$?
\end{itemize}

It is generally believed that the answer to the stronger question Q2 (and thus to Q1) is yes for all $q$ and all $(a_1,a_2,\dots, a_n)\in \mathcal{A}_n(q)$. An old result of Littlewood \cite{Li} shows that this is indeed true when $(q,a_1,a_2)=(4,1,3)$ and $(q,a_1,a_2)=(3,1,2)$. 
Since then, this problem has been extensively studied by various authors, including  Knapowski and Tur\'an \cite{KT},  Kaczorowski \cite{Ka1, Ka2, Ka3}, Feuerverger and Martin \cite{FeM}, Ford and Konyagin \cite{FK1}, Ford, Konyagin and Lamzouri \cite{FKL}, Fiorilli and Martin \cite{FiM}, Fiorilli \cite{Fi}, and Lamzouri \cite{La1, La2}.  For a complete history as well as recent developments, see the expository papers of Granville and Martin \cite{GM},  Ford and Konyagin \cite{FK2}, and Martin and Scarfy \cite{MS} (which includes a very comprehensive list of references). 

Assuming the Generalized Riemann hypothesis GRH and the Linear Independence hypothesis LI (which is the assumption that the nonnegative imaginary parts of the nontrivial zeros of Dirichlet $L$-functions attached to primitive characters are linearly independent over $\mathbb{Q}$), Rubinstein and Sarnak~\cite{RuSa} affirmatively answered questions Q1 and Q2. In fact, under these hypotheses, they established the stronger result that for any $(a_1,\dots, a_n)\in \mathcal{A}_n(q)$, the set of real numbers $x\geq 2$ such that
\begin{equation}\label{FullOrdering}
 \pi(x;q,a_1)>\pi(x;q,a_2)>\dots>\pi(x;q,a_n),
 \end{equation}
  has a positive logarithmic density, which we denote by $\delta(q;a_1, \dots, a_n)$. (Recall that the logarithmic density of a subset $S$ of $\mathbb{R}$ is defined as
$$ \lim_{x\to \infty}\frac{1}{\log x}\int_{t\in S\cap [2,x]}\frac{dt}{t},$$
provided that this limit exists.)  This density can be regarded as the ``probability'' that for each $1\leq j\leq n$, the player $a_j$ is at the $j$-th position in the prime race.

Among their results on question Q2,  Rubinstein and Sarnak \cite{RuSa} showed that for $n$ fixed,
\begin{equation}
\lim_{q\to\infty}\max_{(a_1,\dots,a_n)\in\mathcal{A}_n(q)}\left|\delta(q;a_1,\dots,a_n)-\frac{1}{n!}\right|=0.
\end{equation}
Feuerverger and Martin~\cite{FeM} raised the question of having a uniform version of this statement, in which the number of contestants $n \to \infty$ as $q\to \infty$. In response to this, Lamzouri \cite{La1} established that for any integer $n$ such that $2\leq n\leq \sqrt{\log q}$ we have
 $$\delta(q;a_1, \dots, a_n)= \frac{1}{n!}\left(1+O\left(\frac{n^2}{\log q}\right)\right),$$
uniformly for all $n$-tuples $(a_1,\dots,a_n)\in\mathcal{A}_n(q)$. Feuerverger and Martin~\cite{FeM} also asked whether for $n$ sufficiently large in terms of $q$ the asymptotic formula $\delta(q;a_1, \dots, a_n)\sim 1/n!$ might become false. A few years ago, Ford and Lamzouri (unpublished) developed a heuristic argument which suggests that there should be a transition in the behaviour of the densities when $n=(\log q)^{1+o(1)}$. More specifically, they formulated the following conjecture.
\begin{Con}[Ford and Lamzouri]\label{ExpectedRange}  Let $\varepsilon>0$ be small and $q$ be sufficiently large. 
\begin{enumerate}
\item If $2\leq n\leq (\log q)^{1-\varepsilon}$, then uniformly for all $n$-tuples $(a_1,\dots,a_n)\in\mathcal{A}_n(q)$ we have
$\delta(q;a_1, \dots, a_n)\sim 1/n!$ as $q\to \infty$.

\item If $ (\log q)^{1+\varepsilon} \leq n\leq \varphi(q)$, then there exist  $n$-tuples $(a_1,\dots,a_n), (b_1,\dots, b_n)\in\mathcal{A}_n(q)$ for which $n! \cdot \delta(q;a_1, \dots, a_n) \to 0$ and $n! \cdot \delta(q;b_1, \dots, b_n) \to \infty$ as $q\to\infty$.

\end{enumerate} 
\end{Con}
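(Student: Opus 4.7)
Under GRH and LI, Rubinstein and Sarnak \cite{RuSa} identify $\delta(q;a_1,\dots,a_n)$ with the probability $\pr(X_{q,a_1} > X_{q,a_2} > \cdots > X_{q,a_n})$, where $X_{q,a}$ has mean $-C(q,a)$ for $C(q,a) := -1+\#\{b\bmod q : b^2\equiv a\}$, variance $\sigma^2 \asymp \varphi(q)\log q$, and explicit representation
$$X_{q,a} = -C(q,a) + \sum_{\chi \neq \chi_0} \sum_{\gamma_\chi > 0} \frac{2\,\re(\overline{\chi}(a)\,Z_{\gamma,\chi})}{\sqrt{\tfrac14+\gamma_\chi^2}},$$
with the $Z_{\gamma,\chi}$ independent and uniform on the unit circle. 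Both halves of the conjecture are thereby reduced to quantitative statements about this explicit model.

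\emph{Part 1.} I would run a three-stage comparison. First, the exchangeable-pairs Stein method announced in the abstract replaces the joint law of $(X_{q,a_j})_{j\leq n}$ by a multivariate normal $\mathbf{Y}$ of matching mean and covariance. Second, a Slepian-type normal comparison replaces $\mathbf{Y}$ by a vector $\mathbf{Y}'$ of independent normals with the same marginals, at a cost controlled by $\sum_{i<j}|\cov(X_{q,a_i},X_{q,a_j})|/\sigma^2$; since
$$\cov(X_{q,a},X_{q,b}) = \sum_{\chi \neq \chi_0} \chi(a)\overline{\chi}(b)\,V(\chi), \qquad V(\chi) := \sum_{\gamma_\chi>0} \frac{4}{\tfrac14+\gamma_\chi^2},$$
splitting $V(\chi) = \overline{V} + (V(\chi)-\overline{V})$ reduces the estimate to orthogonality of characters plus a character sum weighted by the fluctuation of $V(\chi)$, which is exactly the object the paper's Hardy--Littlewood circle-method inputs are designed to handle. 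Third, i.i.d.\ continuous variables give each ordering probability exactly $1/n!$ by symmetry, and the tiny normalised mean shifts $C(q,a_j)/\sigma \ll q^{o(1)}/\sqrt{\varphi(q)\log q}$ are absorbed via smoothness of the Gaussian ordering probability.

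\emph{Part 2.} For $n \geq (\log q)^{1+\varepsilon}$ I would construct adversarial tuples whose accumulated bias overwhelms the near-symmetric behaviour. Noting that $C(q,a)$ ranges from $-1$ on non-residues up to $2^{\omega(q)}-1$ on the deepest residues, choose $(a_1,\dots,a_n)$ so that $j \mapsto -C(q,a_j)$ is strictly decreasing (the ``natural'' order). Writing $X_{q,a_j} = -C(q,a_j) + \sigma W_j$ with $(W_j)$ weakly correlated and approximately Gaussian, the ordering event becomes a system of $n-1$ inequalities $W_j - W_{j+1} > (C(q,a_j)-C(q,a_{j+1}))/\sigma$, each biased in the favoured direction; a tilted-Gaussian computation, iterated over the $n-1$ pairs, should push $n!\cdot \delta \to \infty$. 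Reversing the tuple produces $n!\cdot \delta \to 0$ by the same argument run in reverse.

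\textbf{Main obstacle.} The fundamental difficulty is that $1/n!$ decays super-polynomially in $n$, so any \emph{additive} error $o(1)$ from the Stein or normal-comparison steps completely swamps the target probability once $n$ is even moderate. The real work is therefore to extract \emph{multiplicative} error terms, exploiting the independence of the $Z_{\gamma,\chi}$ across $\chi$ to get exponential rather than polynomial decay of the comparison error. Part 2 has the symmetric problem: at the precise threshold $n \asymp \log q$ the accumulated bias is only just beating the noise, and the quantitative lower bound on $n!\cdot\delta$ must be pushed through to match the upper bound on the comparison error at exactly the conjectured power of $\log q$ --- which is what makes the Ford--Lamzouri threshold sharp.
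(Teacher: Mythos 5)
You are attempting to prove a \emph{conjecture} that the paper itself does not establish in full. The authors prove part (1) as Theorem~\ref{FullRace} (in the sharper range $n\leq \log q/(\log\log q)^4$, via Stein's method followed by a direct estimate of the multivariate Gaussian density in Proposition~\ref{Density}), but for part (2) the paper only proves the much weaker Theorem~\ref{BigBias}: for $\varphi(q)^{\varepsilon}\leq n\leq\varphi(q)$ there is a tuple with $\delta<(1-c_\varepsilon)/n!$. This is a fixed relative deficit, not $n!\cdot\delta\to 0$, it is one-sided, and it does not touch the conjectured threshold range $(\log q)^{1+\varepsilon}\leq n<\varphi(q)^{\varepsilon}$. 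So there is no ``paper proof'' of the statement to compare against; any proposal must supply a genuinely new argument, and yours has two concrete gaps.

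For part~(2) your mechanism is wrong at the outset. As recorded in section~\ref{pnrsetup}, $C_q(a)$ takes only \emph{two} values on reduced residues: $C_q(a)=-1$ if $a$ is a non-square mod $q$, and $C_q(a)=C_q(1)$ if $a$ is a square. There is no gradation $-1$ up to $2^{\omega(q)}-1$ across residues, so a tuple with $j\mapsto -C_q(a_j)$ strictly decreasing cannot exist once $n\geq 3$. Moreover the normalized shift $C_q(a)/\sqrt{\var(q)}\ll q^{o(1)}/\sqrt{\varphi(q)\log q}\ll \varphi(q)^{-1/2+o(1)}$ is far too small to tilt the ordering probability by a constant factor, let alone drive $n!\cdot\delta$ to $0$ or $\infty$. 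The bias the paper actually exploits (Lemma~\ref{NonStandardDensity} and estimate~\eqref{LargeCov}) comes from the \emph{correlation} structure: $B_q(1,-1)=-(\log 2)\varphi(q)+O(\log^2 q)$ gives a normalized off-diagonal entry $r_{1,2}\asymp -1/\log q$, which appears in the exponent of the Gaussian density and becomes visible exactly when the typical values of $x_1x_2$ in the relevant integral are $\asymp\log n$. That is why the transition occurs at $(\log n)/\log q\asymp 1$; the means play no role.

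For part~(1) your middle step does not survive scrutiny either. A Slepian/Li--Shao comparison between the Gaussian model and an independent vector produces an \emph{additive} error controlled by $\sum_{i<j}|r_{i,j}|\asymp n\log^2 n/\log q$ (via Correlation Estimate~1), which is enormous relative to $1/n!\leq e^{-n\log n+O(n)}$ for all but bounded $n$. You correctly flag this as the obstacle, but the proposed remedy --- exploiting independence of the $U(\gamma_\chi)$ across $\chi$ to upgrade the comparison to exponential decay --- is not what makes the argument go. In the paper the Stein step is left additive (Normal Approximation Result~1, giving $O(n^3/\varphi(q)^{1/8})$) and is absorbed precisely because $1/n!\geq q^{-o(1)}$ in this range; the \emph{multiplicative} control comes instead from Proposition~\ref{Density}, a direct pointwise estimate of the Gaussian density $f(x)=(1+O(\cdot))\,(2\pi)^{-n/2}\exp(-\tfrac12\|x\|^2(1+O(\cdot)))$, which on integration yields $(1+O(n(\log n)^4/\log q))/n!$ with no additive loss beyond the Stein term. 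Replacing your Slepian step with a density estimate of this type is what is actually needed, and it is exactly the point at which the bound $n\ll\log q$ enters.
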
 

Our first result establishes part (1) of this conjecture.
\begin{thm}\label{FullRace}
Assume GRH and LI. Let $2\leq n\leq \log q/(\log\log q)^4$ be a positive integer.  Then, uniformly for all $n$-tuples $(a_1,\dots,a_n)\in\mathcal{A}_n(q)$ we have
$$\delta(q;a_1, \dots, a_n)= \frac{1}{n!}\left(1+O\left(\frac{n(\log n)^4}{\log q}\right)\right).$$
\end{thm}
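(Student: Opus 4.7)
The plan is to work within the probabilistic framework of Rubinstein--Sarnak: under GRH and LI,
\[
\delta(q; a_1, \ldots, a_n) = \mathbb{P}(X(a_1) > X(a_2) > \cdots > X(a_n)),
\]
where $X = (X(a))_{(a,q)=1}$ is the limiting random vector of normalised prime counting errors. Each $X(a)$ decomposes as a bounded bias term $-c_q(a)$ plus a sum of independent random variables indexed by nontrivial zeros of Dirichlet $L$-functions and weighted by character values $\chi(a)$, with total variance of order $\varphi(q)\log q$. Since for an i.i.d.\ continuous random vector each ordering occurs with probability exactly $1/n!$, the broad strategy is to approximate $(X(a_j))_{j=1}^n$ by a Gaussian, show that its covariance matrix is close to a multiple of the identity, and then invoke a Gaussian comparison inequality to reduce to the i.i.d.\ case.

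The first step is a multivariate normal approximation. I would apply the Normal Approximation Result, obtained via an exchangeable pairs version of Stein's method (resampling the contribution of one character or zero at a time). This produces a Gaussian vector $G$ matching the first two moments of $(X(a_j))_j$, with an error on ordering probabilities that decays in $\log q$ and remains harmless throughout the target range $n \leq \log q / (\log\log q)^4$. The rescaled biases $c_q(a_j)/\sqrt{\var X(a_j)}$ are of size $O(1/\sqrt{\varphi(q)\log q})$, so a short Gaussian perturbation argument absorbs their contribution into the final error.

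Next I would handle the Gaussian comparison. The diagonal is uniform, $\var X(a_j) = \varphi(q)\log q + O(\varphi(q))$, so after rescaling the problem reduces to estimating $\mathbb{P}(G'_1 > \cdots > G'_n) - 1/n!$ for a Gaussian vector $G'$ with unit variances and correlations $\rho_{jk}$. A Normal Comparison Result of Slepian type bounds this difference in terms of the $|\rho_{jk}|$, while the Correlation Estimate controls the average size of the $\rho_{jk}$ by applying Hardy--Littlewood circle method techniques to the relevant character sums. Together these inputs give an aggregate bound of the shape $n(\log n)^4/\log q$.

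The main obstacle I expect is that a small proportion of pairs $(a_j, a_k)$ --- those whose ratio $a_j a_k^{-1}$ is algebraically exceptional, e.g.\ of very small multiplicative order or lying in a small subgroup --- will yield correlations bounded away from zero, so a naive Slepian bound fails on these worst pairs. Following the abstract's hint, the fix is a conditioning argument: identify a small $\sigma$-algebra $\mathcal{G}$ generated by the few characters or zeros responsible for the large correlations, apply the normal comparison conditionally on $\mathcal{G}$ where the residual correlations are genuinely small on average, and integrate out $\mathcal{G}$ at the end. Keeping the combined loss from this conditioning and the ordering combinatorics down to a mere $(\log n)^4$ factor, rather than a polynomial in $n$, is the delicate technical part of the argument.
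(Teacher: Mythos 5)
Your setup is right---Rubinstein--Sarnak correspondence, Stein's method normal approximation, covariance averaging via the circle-method Correlation Estimate---but the final probabilistic step is wrong for this theorem, and the paper takes a genuinely different route there. You propose to finish with a Slepian/Li--Shao normal comparison plus a conditioning argument to handle ``exceptional'' pairs. The paper explicitly notes that these comparison lemmas cannot be used directly here: they produce an \emph{additive} error of shape $\sum_{i<j}|\rho_{ij}| \asymp n(\log n)^2/\log q$ (for threshold events, not orderings, which is itself a mismatch), whereas the target $1/n!$ requires a \emph{multiplicative} error of size $n(\log n)^4/\log q$, i.e. an additive error of size $n(\log n)^4/(n!\log q)$. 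As soon as $n$ grows, the comparison-lemma bound swamps $1/n!$ and nothing nontrivial comes out. The conditioning idea you mention is indeed in the paper, but it is deployed for Theorems 1.4 and 1.6 (the leader and first-$k$ theorems), where one first conditions on $X_1$ (or $X_1,\dots,X_k$) being large; it is not how Theorem 1.2 is proved, and there is no population of ``algebraically exceptional'' pairs that needs quarantining---every off-diagonal correlation is $O(1/\log q)$, and the Correlation Estimate controls their average.

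What the paper actually does for this theorem is a direct density estimate. Using Lemma 3.2 (matrix perturbation estimates for $\det \mathcal{C}$ and $\mathcal{C}^{-1}$ when $\mathcal{C}$ is $\epsilon$-close to the identity), combined with the averaged covariance bounds from Correlation Estimate 1 and a dyadic decomposition of the coordinates of $x$ by size, one shows (Proposition 5.1) that the joint Gaussian density satisfies
\[
f(x) = \bigl(1+O(\tfrac{n(\log n)^2}{(\log q)^2})\bigr)\,\frac{1}{(2\pi)^{n/2}}\exp\Bigl(-\tfrac{\|x\|^2}{2}\bigl(1+O(\tfrac{(\log n)^4}{\log q})\bigr)\Bigr),
\]
uniformly in $x$. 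Since this approximating density is symmetric in $x_1,\dots,x_n$, integrating over the ordered region $x_1>\cdots>x_n$ gives $\frac{1}{n!}$ times a Gaussian-type integral, yielding the claimed multiplicative error directly. No comparison lemma or conditioning is needed. If you want to salvage a comparison-lemma approach you would need to condition first, as in the leader theorem, but that machinery is considerably heavier than the density calculation and in fact appears in the paper precisely for the harder theorems where the density method alone is insufficient.
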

The second part of Conjecture \ref{ExpectedRange} implies, in particular, that the asymptotic formula $\delta(q;a_1, \dots, a_n)\sim 1/n!$ need not hold for all $n$-tuples $(a_1,\dots,a_n)\in\mathcal{A}_n(q)$, if $n$ lies in the range $(\log q)^{1+\varepsilon} \leq n\leq \varphi(q)$. We believe this to be true because we encounter various error terms in our arguments, from different sources, of size about $1/q$. Thus we believe that when the target density $1/n!$ becomes much smaller than any negative power of $q$, which happens when $n\geq  (\log q)^{1+\varepsilon}$, it is no longer reasonable to expect all of the $\delta(q;a_1, \dots, a_n)$ to be close to $1/n!$. Our next result shows that this is indeed the case in the smaller range $\varphi(q)^{\varepsilon}<n \leq \varphi(q)$. Thus we are able to exhibit, for the first time, prime number races for which the biases do not dissolve when $q\to \infty$, confirming the prediction of Feuerverger and Martin \cite{FeM}.  
\begin{thm}\label{BigBias}
Assume GRH and LI. Let $\varepsilon > 0$ and let $q$ be sufficiently large in terms of $\varepsilon$. For every integer $\varphi(q)^{\varepsilon} \leq n \leq \varphi(q)$ there exists an $n$-tuple $(a_1,\dots,a_n)\in\mathcal{A}_n(q)$ such  that
$$ \delta(q;a_1, \dots, a_n)<\big(1-c_{\varepsilon}\big)\frac{1}{n!},$$
for some positive constant $c_{\varepsilon}$ which depends only on $\varepsilon$. 
\end{thm}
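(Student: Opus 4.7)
My plan is to reduce the construction to a leadership asymmetry in the race, and then exhibit an $n$-tuple where one distinguished class leads the race with probability bounded away from its ``fair share'' $1/n$.

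Concretely, first I record the averaging identity
\begin{equation*}
\sum_{\substack{\sigma \in S_n \\ \sigma(1) = \ell}} \delta(q; a_{\sigma(1)}, \dots, a_{\sigma(n)}) \;=\; \delta_{\mathrm{lead}}(q; a_\ell; \{a_1, \dots, a_n\}),
\end{equation*}
where $\delta_{\mathrm{lead}}(q; a; S)$ denotes the logarithmic density of those $x \geq 2$ for which $\pi(x;q,a) > \pi(x;q,a')$ for every $a' \in S\setminus\{a\}$. Thus if I can exhibit a tuple $(a_1, \dots, a_n)$ together with an index $\ell$ satisfying $\delta_{\mathrm{lead}}(q; a_\ell) \leq (1-c_\varepsilon)/n$, then by pigeonhole at least one of the $(n-1)!$ orderings on the left-hand side has density at most $(1-c_\varepsilon)/n!$, which is what we want.

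Under GRH and LI, Rubinstein--Sarnak expresses both $\delta$ and $\delta_{\mathrm{lead}}$ as probabilities involving the limiting random vector $(X(q;a))_a$, with means $-(N(q,a)-1)$ (the Chebyshev bias) and a covariance matrix $\Sigma$ determined by Dirichlet characters and nontrivial $L$-function zeros. Combined with the normal approximation result developed earlier in the paper, this reduces the task to a Gaussian orthant-type computation. I would then construct the tuple to concentrate leadership on a single anchor class $a^*$ by causing the remaining $n-1$ classes to ``cluster'': e.g.\ take $\{a_j : j \neq \ell\}$ inside a single coset of a subgroup $H \leq (\mathbb{Z}/q\mathbb{Z})^*$ of carefully chosen index, so that their $X(q;a_j)$ share a common random contribution (from the characters trivial on $H$) while $X(q;a^*)$ is independent of that shared contribution. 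Provided the clustering is strong enough, this inflates $\delta_{\mathrm{lead}}(q; a^*)$ above $1/n$ by a constant additive amount, which by the averaging identity forces some other $a_\ell$ to satisfy the required leadership bound.

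The main obstacle will be making the leadership inflation of \emph{constant} order in $q$: since the per-coordinate standard deviations of $X(q;a)$ are of order $\sqrt{\varphi(q) \log q}$ while the Chebyshev biases are only of order $1$, and the maximum of $n-1 \geq \varphi(q)^\varepsilon$ roughly independent such Gaussians scales as $\sqrt{\varphi(q) \log q \cdot \log n}$, na\"ive perturbative estimates yield an inflation that vanishes with $q$. Overcoming this requires genuinely exploiting the structured covariance, likely by invoking the paper's normal comparison inequalities to replace $\Sigma$ by a tractable reference covariance (for instance, a low-rank perturbation of a diagonal matrix) and then tracking, via the exchangeable-pairs Stein method, how the cumulative effect of many small character-sum asymmetries builds up to a constant shift in the leadership probability. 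It may also be necessary to combine the clustering construction with a class $a^*$ carrying additional Chebyshev bias (for example, $a^* = 1$ when $q$ has many prime factors) to boost the effect.
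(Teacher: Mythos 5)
Your pigeonhole skeleton is sound, and in fact it is exactly the kind of reduction the paper uses: reduce the full-ordering density $\delta$ to a coarser density via the averaging identity, exhibit a constant-order deficit in that coarser density for a suitable tuple, and then invoke the pigeonhole principle over the $\ (n-1)!$ or $(n-2)!$ ways of completing the ordering. But the choice of coarse density is where your plan breaks down, and it breaks down for a reason the paper itself proves.

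You reduce to finding a tuple with $\delta_{\mathrm{lead}}(q;a_\ell) \leq (1-c_\varepsilon)/n$, i.e.\ a class whose probability of being the outright leader has a constant-factor deficit. However, Theorem \ref{Leader} asserts that
$$ \delta_1(q;a_1,\dots,a_n) = \frac{1}{n}\Bigl(1 + O\bigl(n^4\varphi(q)^{-1/8} + (n\log q)^{-12/25}\bigr)\Bigr) $$
\emph{uniformly} over all $n$-tuples, for all $2 \leq n \leq \varphi(q)^{1/32}$. Since the $n$ leadership probabilities of a fixed tuple sum to $1$, no class can have a constant-order surplus or deficit in this range. The range $\varphi(q)^\varepsilon \leq n < \varphi(q)^{1/32}$ (which for small $\varepsilon$ is the heart of the theorem, and exactly the primary range the paper handles before the padding trick) is therefore a range where your target cannot be achieved. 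The ``main obstacle'' you flagged at the end is not a technical difficulty to be overcome; it is a genuine obstruction, since leadership densities really do equidistribute.

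The paper's route is to apply the same pigeonhole strategy one level deeper: it proves (Theorem \ref{OrderFirstk2} with $k=2$) the existence of a tuple with a constant-order deficit in $\delta_2(q;a_1,\dots,a_n)$, the density of the event that $a_1$ is first \emph{and} $a_2$ is second, and then pigeonholes over $S_{n-2}$. The bias mechanism that makes this work is visible in Lemma \ref{NonStandardDensity}: choosing $a_1=1,\, a_2=-1$ forces a correlation $r_{1,2} = -(\log 2 + o(1))/\log q$, and in the Gaussian density this enters through a term $x_1 x_2 \, r_{1,2}$ in the exponent. On the event in question both $x_1$ and $x_2$ are forced to be about $\sqrt{2\log n}$, so $x_1 x_2 r_{1,2} \asymp (\log n)/\log q$, which is of constant order precisely when $n$ is a fixed power of $q$. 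For the leader event alone only one coordinate is forced to be large, so no comparable cross term survives in the exponent, which is why $\delta_1$ stays at $(1/n)(1+o(1))$ and your approach cannot be salvaged.
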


We next consider a stronger form of question Q1, concerning the leader in a prime number race with many contestants.  By the work of Rubinstein and Sarnak, it follows that for any $(a_1,\dots, a_n)\in \mathcal{A}_n(q)$, the set of real numbers $x\geq 2$ such that 
$$
\pi(x;q, a_1)> \max_{2\leq j\leq n}\pi(x;q, a_j),
$$
has a positive logarithmic density which we denote by $\delta_1(q; a_1, \dots, a_n)$. Kaczorowski~\cite{Ka1} has considered this leadership question (in the special case $a_1 =1$), and obtained some positive lower density results assuming only GRH rather than LI. One can ask the following natural quantitative question:
\begin{itemize}
\item[Q3.] Will each of the players $a_1, \dots, a_n$ have an ``equal chance'' $1/n$ of leading the race, when $q\to \infty$?
\end{itemize}
It follows from Theorem \ref{FullRace} that the answer to this question is yes, if the number of contestants $n$ lies in the range $2\leq n = o(\log q/(\log\log q)^4)$. Using a different approach we extend this significantly, showing that $n$ can be as large as a small power of $q$.

\begin{thm}\label{Leader}
Assume GRH and LI. Let $2 \leq n\leq \varphi(q)^{1/32}$ be an integer. Then, uniformly for all $n$-tuples $(a_1,\dots,a_n)\in\mathcal{A}_n(q)$ we have

 $$ \delta_1(q; a_1, \dots, a_n)=\frac{1}{n}\left(1+O\left(\frac{n^4}{\varphi(q)^{1/8}}+ \frac{1}{(n\log q)^{12/25}}\right)\right).$$
\end{thm}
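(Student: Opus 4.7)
The strategy is to pass to the Rubinstein--Sarnak probabilistic model and then apply, in turn, a multivariate normal approximation and a normal comparison argument, reducing to the case of independent Gaussians. Under GRH and LI the renormalised prime counting functions $(\log x/\sqrt{x})(\varphi(q)\pi(x;q,a_j)-\pi(x))$ have a joint limiting distribution, namely that of a random vector $\mathbf{X}=(X_{a_1},\ldots,X_{a_n})$ with
$$X_a=-c_q(a)+\sum_{\chi\neq \chi_0}\sum_{\gamma>0,\,L(1/2+i\gamma,\chi)=0}\frac{2\,\re(\overline{\chi(a)}\,Z_{\chi,\gamma})}{\sqrt{1/4+\gamma^2}},$$
where the $Z_{\chi,\gamma}$ are i.i.d.\ uniform on the unit circle and $c_q(a)$ is the standard Chebyshev bias constant. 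Consequently $\delta_1(q;a_1,\ldots,a_n)=\pr(X_{a_1}>X_{a_j}\text{ for all }j\geq 2)$. The diagonal entries of the covariance matrix $\Sigma$ of $\mathbf{X}$ all equal $V\sim 2\varphi(q)\log q$, while the off-diagonal entries $\Sigma_{ij}$ are much smaller in absolute value.

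First I would apply an exchangeable-pairs Stein-method estimate to approximate $\mathbf{X}$ by the Gaussian vector $\mathbf{G}$ with the same mean and covariance $\Sigma$, evaluated on the leader event. After a standard smoothing of this non-smooth event, the Stein bound produces a discrepancy of size $O(n^4/\varphi(q)^{1/8})$: the $\varphi(q)^{1/8}$ saving comes from the large number of independent summands $Z_{\chi,\gamma}$ contributing to each coordinate of $\mathbf{X}$, and the $n^4$ arises from the dimensionality of the test function.

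Next I would decorrelate $\mathbf{G}$. A quantitative normal comparison lemma of Slepian/Berman type controls the difference between $\pr(G_{a_1}>G_{a_j}\,\forall j\geq 2)$ and the analogous probability for the product Gaussian $\widetilde{\mathbf{G}}$ (same means and variances, but independent coordinates) by an expression involving the normalised covariances $|\Sigma_{ij}|/V$ and the tail behaviour of the maximum of $n$ Gaussians. The crucial quantitative input here is a sharp average bound on $|\Sigma_{ij}|$, obtained via harmonic analysis related to the Hardy--Littlewood circle method applied to character sums such as $\sum_{\chi\neq\chi_0}\chi(a_i)\overline{\chi(a_j)}/(1/4+\gamma_\chi^2)$. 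Optimising the cutoff between bulk and tail ranges of the Gaussian maxima produces the $(n\log q)^{-12/25}$ contribution. For the decorrelated model one has
$$\pr(\widetilde{G}_{a_1}>\widetilde{G}_{a_j}\,\forall j\geq 2)=\int_{\R}\frac{1}{\sqrt{V}}\phi\!\left(\frac{y+c_q(a_1)}{\sqrt{V}}\right)\prod_{j=2}^{n}\Phi\!\left(\frac{y+c_q(a_j)}{\sqrt{V}}\right)dy,$$
with $\phi,\Phi$ the standard normal density and CDF; since $c_q(a_j)=O(1)$ while $\sqrt{V}\to\infty$, a Taylor expansion reduces this to the symmetric integral $\int_{\R}\phi(z)\Phi(z)^{n-1}dz=1/n$ up to an additive $O(1/\sqrt{V})$, which is absorbed into the stated error.

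The main obstacle will be the normal comparison step with its sharp exponent $12/25$. Standard Slepian-style bounds lose too much once $n$ is a power of $\varphi(q)$, because the typical maximum of $n$ Gaussians grows like $\sqrt{V\log n}$; a uniform bound on correlations has to be traded against the width of the region where the Gaussian density is non-negligible. Balancing these quantities against the average correlation bound supplied by the circle method is what produces the stated exponent. The Stein step, by contrast, should be essentially routine once the exchangeable pair is set up by resampling a single $Z_{\chi,\gamma}$.
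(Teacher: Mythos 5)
Your high-level outline — Rubinstein--Sarnak model, then a Stein-method (exchangeable pairs) multivariate normal approximation, then a normal comparison step to decorrelate, then an explicit computation with independent Gaussians — matches the paper's strategy, and the correlation input you identify (an averaged bound on the $|\Sigma_{ij}|/V$ via harmonic analysis) is exactly what the paper proves as Correlation Estimate~1. However, the normal comparison step as you have described it would not work, and you have not supplied the idea that makes it go through.

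The difficulty you yourself flag is real: if you apply a Li--Shao/Berman comparison directly to the event $\{G_{a_1}>G_{a_j}\ \forall j\}$ (say after changing variables to $G_{a_1}-G_{a_j}$), the exponential factors $\exp(-(u_i^2+u_j^2)/2(1+\rho))$ in the comparison bound are evaluated at thresholds $u_i=0$ and give no decay, so the error is of order $\sum_{i<j}|\Sigma_{ij}|/V\asymp n^2/\log q$. This is much bigger than the target $1/n$ once $n$ is any fixed power of $\varphi(q)$, so the bound is worse than trivial. ``Optimising the cutoff between bulk and tail ranges of the Gaussian maxima'' does not by itself rescue this, because the comparison inequality is being applied to the wrong quantity. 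The paper's key observation (Theorem~\ref{probleader} and Lemma~\ref{probleaderlemma}) is a \emph{conditioning} trick: write
$$\p(G_{a_1}>\max_{j\ge 2}G_{a_j})=\int_{-\infty}^{\infty}\frac{e^{-x^2/2}}{\sqrt{2\pi}}\,\p\bigl(\max_{j\ge 2}G_{a_j}\le x\mid G_{a_1}=x\bigr)\,dx,$$
note that the integral is dominated by $x\approx\sqrt{(2-o(1))\log n}$ (the contribution from smaller $x$ is killed by Slepian's Lemma / Normal Comparison Result~2), decorrelate $G_{a_1}$ from the other coordinates by an explicit linear change of variables, and \emph{only then} apply Li--Shao to the resulting conditional probability with thresholds $\approx x\approx\sqrt{2\log n}$. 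Now the exponential factors contribute $\exp(-(u_i^2+u_j^2)/2(1+\rho))\approx n^{-2(1-O(\epsilon))}$, which multiplies the $O(n^2/\log q)$ sum down to something that beats $1/n$ as long as $\log n \ll \log q$. Without this step, the decorrelation you propose cannot reach $n$ as large as $\varphi(q)^{1/32}$.

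Two smaller points: the paper's smoothed Stein bound (Normal Approximation Result~1) gives an \emph{absolute} discrepancy $O(n^3/\varphi(q)^{1/8})$, so the $n^4$ in the theorem is the \emph{relative} error after dividing by $1/n$; your phrasing blurs this. And the Chebyshev bias shifts $c_q(a)=O(q^{\epsilon})$ are absorbed into the smoothing parameter $\delta$ rather than handled by a Taylor expansion of the integrand, but that distinction is cosmetic.
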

The key ingredient in the proof of this theorem is the following probabilistic result, which may be of independent interest. It investigates the probability that a given random variable is the leader among weakly correlated Gaussian random variables. 
\begin{thm}\label{probleader}
Suppose $\varepsilon > 0$ is sufficiently small, and $n$ is sufficiently large. Let $X_{1},...,X_{n}$ be mean zero, variance one, jointly normal random variables, and write $r_{i,j} := \E X_{i}X_{j}$, and suppose that $|r_{i,j}| \leq \varepsilon$ whenever $i \neq j$. Then
$$ |\p(X_{1} > \max_{2 \leq i \leq n} X_{i}) - 1/n| \ll n^{-100} + n^{-1.99}\sum_{2 \leq i \leq n} |r_{1,i}| + n^{-2.99} \sum_{2 \leq i < j \leq n} |r_{i,j}| . $$
\end{thm}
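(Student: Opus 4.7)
The strategy is to condition on $X_1 = t$ in order to reduce to a near-independent Gaussian orthant probability, and then compare with the genuinely independent case (whose orthant probability integrates exactly to $1/n$) via a Slepian--Plackett type interpolation. For $2 \leq i \leq n$ set $Z_i := (X_i - r_{1,i} X_1)/\sqrt{1-r_{1,i}^2}$; the vector $(Z_i)$ is independent of $X_1$, jointly normal, mean zero, variance one, with correlations $\gamma_{i,j} := (r_{i,j} - r_{1,i}r_{1,j})/\sqrt{(1-r_{1,i}^2)(1-r_{1,j}^2)} = O(\varepsilon)$. The event $\{X_1 > \max_{i\geq 2} X_i\}$ translates to $\{Z_i < s_i(t)\ \forall i\}$ with $s_i(t) := t\sqrt{(1-r_{1,i})/(1+r_{1,i})}$, so
\[
\p\bigl(X_1 > \max_{i\geq 2} X_i\bigr) \;=\; \int_{-\infty}^{\infty} \phi(t)\, \p\bigl(Z_i < s_i(t)\ \forall i\bigr)\, dt,
\]
and the contribution from $|t| > 20\sqrt{\log n}$ is $\ll n^{-100}$ by a Gaussian tail bound.

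Let $(Z_i^*)$ be iid standard normal, $Z_i(u) := \sqrt{u}\, Z_i + \sqrt{1-u}\, Z_i^*$; the Plackett/Li--Shao identity gives
\[
\p(Z_i < s_i\ \forall i) \;=\; \prod_{i=2}^{n}\Phi(s_i) + \sum_{2\leq i<j\leq n} \gamma_{i,j}\int_{0}^{1}\phi_{2}(s_i, s_j; u\gamma_{i,j})\, P^{(i,j)}_{u}(s)\, du,
\]
where $\phi_2(\cdot,\cdot;\rho)$ is the bivariate normal density and $P^{(i,j)}_u(s) = \p(Z_k(u) < s_k,\ k\neq i,j\mid Z_i(u)=s_i,\, Z_j(u)=s_j)$. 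Since $|u\gamma_{i,j}| \leq \varepsilon$ is small, $\phi_2(\cdot;u\gamma_{i,j}) \ll \phi(s_i)\phi(s_j)$; and since the conditional covariance matrix of $(Z_k(u))_{k\neq i,j}$ is an $O(\varepsilon)$-perturbation of the identity with $O(\varepsilon)$ mean shifts, a careful analysis gives $P^{(i,j)}_u(s) \ll \prod_{k\neq 1,i,j}\Phi(s_k(t))$.

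For the main term, write $s_i(t) = t + \tau_i(t)$ with $\tau_i(t) = -tr_{1,i} + O(tr_{1,i}^2)$, Taylor expand $\Phi(s_i) = \Phi(t) + \phi(t)\tau_i + \cdots$, and multiply out to get $\int\phi(t)\prod\Phi(s_i(t))\,dt = 1/n + J_1\sum_i r_{1,i} + (\text{higher order})$ with $J_1 := -\int t\phi(t)^2\Phi(t)^{n-2}\,dt$. Integration by parts against $\tfrac{d}{dt}\Phi(t)^{n-1}$, combined with $\E[\max_{i\leq n} U_i^2] \ll \log n$ for iid $N(0,1)$ variables $U_i$, gives $|J_1| \ll \log n/n^2 \leq n^{-1.99}$. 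For the error term, the bound on $P^{(i,j)}_u(s)$ together with another IBP estimate $\int\phi^3\Phi^{n-3}\,dt \ll \log n/n^3 \leq n^{-2.99}$ shows that the double sum is $\ll \sum_{i<j}|\gamma_{i,j}|\cdot\log n/n^3$. Since $|\gamma_{i,j}| \leq |r_{i,j}| + O(|r_{1,i}||r_{1,j}|)$, the first piece yields the target $n^{-2.99}\sum|r_{i,j}|$, while the cross-term $(\sum|r_{1,i}|)^2\log n/n^3$ is absorbed into $n^{-1.99}\sum|r_{1,i}|$ since $\sum|r_{1,i}| \leq \varepsilon n \ll n^{1.01}/\log n$ for $n$ large.

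The \emph{main obstacle} is controlling the conditional probability $P^{(i,j)}_u(s)$ uniformly in $(i,j)$ and $u \in [0,1]$: the inhomogeneous thresholds $s_k(t)$ combined with $O(\varepsilon)$ conditional mean shifts must not inflate $\prod_k \Phi(s_k)$ by more than a constant factor. This is precisely where the smallness hypothesis $|r_{i,j}| \leq \varepsilon$ is used essentially, and it also accounts for the specific exponents $1.99$ and $2.99$; the higher-order iterations of the Slepian formula, by the same reasoning, contribute geometrically smaller terms in $\varepsilon$ and are thus negligible.
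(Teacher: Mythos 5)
Your conditioning transformation ($X_1 = t$, $Z_i := (X_i - r_{1,i}X_1)/\sqrt{1-r_{1,i}^2}$, thresholds $s_i(t) = t\sqrt{(1-r_{1,i})/(1+r_{1,i})}$) is exactly the one the paper uses, and your idea to control the conditional probability $P^{(i,j)}_u(s)$ is indeed the natural way to make a Plackett-type interpolation work. But the claimed bound $P^{(i,j)}_u(s) \ll \prod_{k\neq 1,i,j}\Phi(s_k(t))$, which you then rely on to kill the contribution from small $t$, is false, and this gap is not repairable by ``a careful analysis'' in the range you need it. Consider the extreme admissible configuration where all off-diagonal $\gamma_{k,l}$ equal $+\varepsilon$ and $t$ is near $0$, so every $s_k(t)\approx 0$. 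Realizing the correlated normals as $\sqrt{\varepsilon}\,Z_0 + \sqrt{1-\varepsilon}\,\xi_k$ with $Z_0, \xi_k$ iid $N(0,1)$, one sees that $\p(\max_k Z_k(u) \leq 0)$ is bounded below by a constant times $e^{-C/\varepsilon}$ (coming from the event $Z_0 \lesssim -1/\sqrt{\varepsilon}$), whereas $\prod_k\Phi(0) = 2^{-(n-3)}$. Since $\varepsilon>0$ is a fixed small constant and $n\to\infty$, $e^{-C/\varepsilon}$ dwarfs $2^{-(n-3)}$, so $P^{(i,j)}_u(s)$ can be exponentially (in $n$) larger than $\prod_k\Phi(s_k)$. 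Plugging the honest worst case $P^{(i,j)}_u\ll e^{-C/\varepsilon}$ back into your error integral over $|t|\lesssim 1$ gives a contribution $\gg e^{-C/\varepsilon}\sum_{i<j}|\gamma_{i,j}|$, which can be as large as $e^{-C/\varepsilon}\varepsilon n^2$; that does not fit under $n^{-100}$ for $n$ large.

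The paper sidesteps this problem instead of fighting it. Its Lemma \ref{probleaderlemma} proves precisely the conditional-probability comparison you want (same $V_i = Z_i$, two applications of the Li--Shao inequality against independent normals), but it is stated as a bound that is only useful when $x$ is large, namely $\gg \sqrt{\log n}$. The crucial extra step, which your sketch is missing, is a \emph{separate} argument for the small-$X_1$ range: Normal Comparison Result 2, which uses Slepian's Lemma to replace the $X_i$ ($i\geq 2$) by equicorrelated normals and then a common-factor construction and integral estimate to show $\p(\max_{2\leq i\leq n}X_i \leq \sqrt{(2-\delta)\log n}) \ll n^{-100}$ once $\varepsilon$ is small enough in terms of $\delta$. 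That bound directly dominates the contribution of the problematic range $t < \sqrt{(2-\delta)\log n}$ to $\p(X_1 > \max_i X_i)$ without ever invoking Li--Shao there, and only afterwards is Lemma \ref{probleaderlemma} integrated against the Gaussian density over $t > \sqrt{(2-\delta)\log n}$, where the exponential factors $e^{-x^2/2(1+O(\varepsilon))}$ produce exactly the $n^{-1.99}$ and $n^{-2.99}$ weights in the theorem. Your main-term analysis via Taylor expansion of $\Phi(s_i(t))$ and integration by parts would likely go through but is more laborious than the paper's one-line bound $|\Phi(s_i(t)) - \Phi(t)| \ll |r_{1,i}|\,t\,e^{-(t^2/2)(1+O(\varepsilon))}$; the real missing idea is the Slepian-based pre-truncation of the small-$X_1$ range.
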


Note that the probability would be exactly $1/n$ if the $X_i$ were independent of one another, by symmetry.

We also consider a variant of question Q3 concerning the ordering of the first $k$ contestants in a prime race with many competitors. To this end, for each integer $1\leq k\leq n$, we define
 $\delta_k(q;a_1, \dots, a_n)$ to be the logarithmic density of the set of real numbers $x\geq 2$ such that 
$$
\pi(x;q, a_1)>\pi(x;q, a_2) > \cdots> \pi(x;q, a_k)> \max_{k+1\leq j\leq n}\pi(x;q, a_j).
$$
Note that the cases $k=n-1$ and $k=n$ both correspond to the full ordering \eqref{FullOrdering}, and hence $\delta_{n-1}(q;a_1,\dots, a_n)=\delta_{n}(q;a_1,\dots, a_n) = \delta(q;a_1,\dots, a_n)$. 

It follows from Theorem \ref{FullRace} that in the range $1 \leq k \leq n - 1 \leq \log q/(\log\log q)^4$, we have 
$$ \delta_k(q;a_1, \dots, a_n)=\frac{(n-k)!}{n!}\left(1+O\left(\frac{n(\log n)^4}{\log q}\right)\right).$$
Now $\frac{(n-k)!}{n!} = n^{-k(1+o(1))}$ as $n \rightarrow \infty$, so the heuristic discussed following Theorem \ref{FullRace} leads us to expect that $\delta_k(q;a_1, \dots, a_n) \sim \frac{(n-k)!}{n!}$ even for very large $n$, provided roughly that $k \ll (\log q)/\log n$. We will show that the asymptotic does holds on almost this entire range of $k$, for all $n$ such that $(\log n)/\log q\to 0$ as $q\to\infty$. Moreover, unlike the case $k=1$ (where Theorem \ref{Leader} permits $n$ to be a small fixed power of $q$), we show that the condition $(\log n)/\log q\to 0$ is necessary to guarantee an asymptotic for any $k \geq 2$.

\begin{thm}\label{OrderFirstk1}
Assume GRH and LI. Let $2\leq k\leq n-2$ be integers, and suppose $(\log q)/\log n$ is large enough and $k(\log k)^{10} \leq (\log q)/\log n$.  Then, uniformly for all $n$-tuples $(a_1,\dots,a_n)\in\mathcal{A}_n(q)$ we have
$$ \delta_k(q;a_1, \dots, a_n)= \frac{(n-k)!}{n!}\left(1+O\left(k(\log k)^6\frac{\log n}{\log q}+\frac{1}{n \log^{1/10}q} \right)\right).$$
\end{thm}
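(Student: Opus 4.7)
The plan is to reduce $\delta_k$ to a probabilistic statement about weakly correlated Gaussian random variables via the Rubinstein--Sarnak framework, and then to prove that Gaussian statement by conditioning on the value of the $k$-th variable. This cleanly separates the ordering question among the first $k$ contestants (to be handled in the style of Theorem~\ref{FullRace}) from the leader question among the last $n-k$ (to be handled in the style of Theorems~\ref{Leader} and~\ref{probleader}).

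Under GRH and LI, Rubinstein--Sarnak identifies $\delta_k(q;a_1,\dots,a_n)$ with $\p(X_1>X_2>\cdots>X_k>\max_{j>k} X_j)$, where the $X_j$ are the limiting random variables attached to the classes $a_j$ (sums over nontrivial zeros of Dirichlet $L$-functions with independent uniform random phases). The Normal Approximation Result of the earlier sections then replaces $(X_1,\dots,X_n)$ by a mean-zero, variance-one, jointly normal vector $(Y_1,\dots,Y_n)$, whose pairwise correlations $r_{i,j}=\E Y_iY_j$ are small by the Correlation Estimate.

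The main probabilistic reduction is to condition on $Y_k=t$. Given $Y_k=t$, the remaining variables form a jointly normal vector with conditional means $r_{i,k}\,t$ (negligible unless $|t|$ is large) and with almost unchanged covariance; in particular the cross-correlations between $\{Y_i:i<k\}$ and $\{Y_j:j>k\}$ remain $O(\varepsilon)$, so one can treat the event $\{Y_1>\cdots>Y_{k-1}>t\}$ and the event $\{\max_{j>k} Y_j<t\}$ as approximately independent. For the first event I would adapt the ordering analysis underlying Theorem~\ref{FullRace}, combining the Normal Comparison Results with the small cross-correlations to show
\[
\p(Y_1>\cdots>Y_{k-1}>t\mid Y_k=t) = \frac{(1-\Phi(t))^{k-1}}{(k-1)!}\bigl(1+O(k(\log k)^4 \log n/\log q)\bigr).
\]
For the second event I would invoke the exchangeable-pairs argument behind Theorem~\ref{probleader}, applied in the conditional distribution, to get
\[
\p(\max_{j>k} Y_j<t\mid Y_k=t) = \Phi(t)^{n-k}\bigl(1+O(\text{small correlation sum})\bigr).
\]
Multiplying these factors and integrating against the standard normal density $\phi(t)$ recovers the main term via the beta-integral identity
\[
\int_{-\infty}^{\infty}\phi(t)\,\frac{(1-\Phi(t))^{k-1}}{(k-1)!}\,\Phi(t)^{n-k}\,dt = \frac{1}{(k-1)!}\int_0^1(1-u)^{k-1}u^{n-k}\,du = \frac{(n-k)!}{n!},
\]
after the substitution $u=\Phi(t)$.

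The main obstacle will be obtaining the required uniformity in the conditioning value $t$. The integral against $\phi(t)$ is dominated by $t$ of order $\sqrt{\log n}$, and at such scales neither the conditional mean shifts $r_{i,k}\,t$ nor the target probability itself is close to its unconditional value; both the ordering estimate for the first $k-1$ contestants and the conditional leader estimate for the last $n-k$ contestants therefore have to be proved in forms that remain sharp across this tail regime. The $(\log k)^{10}$ and $(\log k)^6$ logarithmic factors in the hypothesis and the error should emerge from combining (i) one factor of $(\log n)/\log q$ per pairwise comparison among the first $k$ variables (from the Normal Comparison Results applied to their Gaussian joint distribution), (ii) several extra logarithmic losses absorbed at the critical tail scale $t\asymp\sqrt{\log n}$, and (iii) the accumulation of errors across the $k-1$ consecutive orderings. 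Finally, the hypothesis that $(\log q)/\log n$ be large reflects the general principle, visible also in Theorem~\ref{BigBias}, that one cannot hope for an asymptotic $\delta_k\sim (n-k)!/n!$ once the target size $n^{-k}$ drops below the ambient error floor of size roughly $q^{-c}$.
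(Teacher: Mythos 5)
Your high-level framing is reasonable --- pass from the race to a jointly Gaussian vector via the Normal Approximation Result, then condition so that the ordering of the first $k$ contestants and the leadership event among the last $n-k$ can be handled separately, finally integrating into a beta-type identity --- but the specific conditioning you propose does not actually decouple the two pieces, and that is where the argument collapses.

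You condition only on $Y_k=t$ and then assert that $\{Y_1>\cdots>Y_{k-1}>t\}$ and $\{\max_{j>k}Y_j<t\}$ may be ``treated as approximately independent'' because the cross-correlations are $O(\varepsilon)$. Making that precise is the whole theorem. After conditioning on $Y_k=t$ alone, the cross-covariances between $\{Y_i:i<k\}$ and $\{Y_j:j>k\}$ are $r_{i,j}-r_{i,k}r_{j,k}$, still of size up to $\asymp 1/\log q$. If you decorrelate the two blocks by a standard Gaussian comparison (Normal Comparison Result 3 of Li--Shao), the error is of the shape $\sum_{i<k<j}|r_{i,j}|\,e^{-\Theta(t^2)}$; at the scale $t\asymp\sqrt{2\log(n/k)}$ where the beta integral concentrates, this is roughly $\sqrt{kn}\,(\log^{2}n/\log q)\cdot(k/n)^{2-o(1)}$, while the target main term is $(n-k)!/n!\asymp n^{-k}$. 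For $k\ge 2$ this forces $n$ to be at most a tiny power of $\log q$, whereas the theorem permits $n$ as large as $\exp\bigl((\log q)/(k\log^{10}k)\bigr)$. In short, the probabilities are too small and the absolute covariance bounds too large for the off-the-shelf tools; this is exactly the obstruction the paper identifies.

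The paper's proof departs from your plan in several essential ways. It conditions on the \emph{entire} vector $(Z_1,\dots,Z_k)=(x_1,\dots,x_k)$, which permits forming variables $V_i:=Z_i-\sum_{l\le k}u_{i,l}Z_l$ (for $i>k$) that are exactly independent of the first $k$ coordinates, so the two blocks genuinely factor under the conditioning; your $\{Y_k=t\}$ conditioning cannot produce this. The joint density of $(Z_1,\dots,Z_k)$ is then estimated directly by Proposition~\ref{Density}, which is sharp because $k$ is small relative to $(\log q)/\log n$. The conditional probability $\p(\max_{i>k}Z_i\le x_k\mid Z_1,\dots,Z_k)$ is controlled not by the standard Li--Shao inequality but by a new hybrid comparison (Lemma~\ref{hybridcomparison}), whose error carries a prefactor of the main term $\prod_i\Phi(w_i)$ itself (of size roughly $e^{-k}$ in the relevant regime), obtained by running Slepian's lemma \emph{inside} the comparison proof; this is what renders the comparison non-trivial at the small scale $n^{-k}$. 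Making the prefactor effective further relies on the number-theoretic input \eqref{PosImpliesSmall} that positive covariances $B_q(a,b)$ are $O(\log q)$ (not $O(\varphi(q))$), so a much smaller $\epsilon_1$ can be fed into Slepian's lemma than the two-sided bound $\epsilon\asymp 1/\log q$. Finally, a truncation $Z_k>A$ with $A$ chosen so that $e^{0.51A^2}=n/(k\log n)$ discards the small-$t$ range (where comparison is hopeless) at cost $\ll n^{-2k}$. None of these ingredients are present in your outline, and without them the ``approximately independent'' step does not yield an error smaller than the main term. (A smaller point: the estimate of Theorem~\ref{probleader} is a Slepian/Li--Shao argument, not an exchangeable-pairs argument; exchangeable pairs enter only through the Reinert--R\"ollin normal approximation, one level earlier.)
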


\begin{thm}\label{OrderFirstk2}
Assume GRH and LI. Let $\varepsilon > 0$ and let $q$ be sufficiently large in terms of $\varepsilon$. Let $k\geq 2$ be fixed and let $n$ be an integer in the range $\varphi(q)^{\varepsilon} \leq n<\varphi(q)^{1/41}$. Then there exists an $n$-tuple $(a_1,\dots,a_n)\in\mathcal{A}_n(q)$ such that
$$ \delta_k(q;a_1, \dots, a_n)<\big(1-c_{\varepsilon}\big)\frac{(n-k)!}{n!},$$
for some positive constant $c_{\varepsilon}$ that depends only on $\varepsilon$. 
\end{thm}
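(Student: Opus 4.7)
This is the partial-ordering analogue of Theorem~\ref{BigBias} for the density $\delta_k$, in the range $n<\varphi(q)^{1/41}$ and fixed $k\ge 2$. The plan is to adapt the construction used for Theorem~\ref{BigBias} and reduce the bound for $\delta_k$ to a symmetric ``top-$k$'' probability, which can then be estimated via the normal-approximation, normal-comparison, and correlation-estimate tools announced in the introduction.

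Choose the tuple as follows. Let $a_1,\dots,a_k$ be $k$ distinct non-trivial quadratic residues modulo $q$, all with the same Chebyshev parameter $c_q(a_i):=-1+\#\{b\bmod q:b^2\equiv a_i\}\ge 1$ (for prime $q$, any $k$ non-trivial squares will do, each having $c_q=1$); let $a_{k+1},\dots,a_n$ be non-residues modulo $q$, for which $c_q(a_j)=-1$. Both choices are possible since quadratic residues and non-residues each fill about $\varphi(q)/2$ classes and $n\le\varphi(q)^{1/41}\ll\varphi(q)$. Under GRH and LI, $\delta_k(q;a_1,\dots,a_n)$ equals a probability involving the Rubinstein--Sarnak limiting random variables $X_{a_i}$ with means $-c_q(a_i)$: in our tuple, $X_{a_1},\dots,X_{a_k}$ are biased low while $X_{a_{k+1}},\dots,X_{a_n}$ are biased high, exactly opposite to the configuration required by the event defining $\delta_k$.

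The variables $X_{a_1},\dots,X_{a_k}$ are approximately exchangeable: they share the same marginal distribution, and their pairwise covariances differ only by character sums small enough to be absorbed via the normal-comparison inequalities of the paper. Summing the defining event of $\delta_k$ over the $k!$ orderings of $a_1,\dots,a_k$ therefore gives
\begin{equation*}
\delta_k(q;a_1,\dots,a_n)=\frac{1}{k!}\,\p\big(\{a_1,\dots,a_k\}\text{ is the top }k\big)+o\!\left(\frac{(n-k)!}{n!}\right),
\end{equation*}
so it suffices to show the probability on the right that the $k$ squares form the top $k$ is at most $(1-c_\varepsilon)/\binom{n}{k}$. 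To prove this, use the normal-approximation theorem to replace $(X_{a_i})$ by a jointly Gaussian vector of matching mean and covariance; the approximation error is a small power of $\varphi(q)$, and the upper bound $n<\varphi(q)^{1/41}$ is exactly what forces this error to be smaller than the target $1/\binom{n}{k}$. On the Gaussian side, the top-$k$ event asks each of the $n-k$ high-mean non-squares to lie below the minimum of the $k$ low-mean squares; each such individual comparison carries a multiplicative downward bias of order $1-c/\sqrt{\log q}$ relative to the unbiased case, and the correlation estimates of the paper allow these $n-k$ comparisons to be treated as essentially independent. The joint bias is then at most $(1-c/\sqrt{\log q})^{n-k}\le e^{-c(n-k)/\sqrt{\log q}}$, which is bounded above by $1-c_\varepsilon$ for some $c_\varepsilon=c_\varepsilon(\varepsilon)>0$, since $(n-k)/\sqrt{\log q}\to\infty$ when $n\ge\varphi(q)^\varepsilon$.

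The main obstacle is rigorously turning this accumulated-bias heuristic into a genuine bound in the presence of the non-trivial correlations among the $X_{a_i}$ (they all live in the span of a shared collection of character-indexed random variables coming from zeros of Dirichlet $L$-functions) and the small failure of strict exchangeability within $a_1,\dots,a_k$. This is exactly where the circle-method-based correlation estimates and the normal-comparison tools do the heavy lifting; the exponent $1/41$ presumably reflects the quantitative losses in those estimates. Structurally, the proof should parallel that of Theorem~\ref{BigBias}, with the top-$k$ event replacing the full-ordering event and the symmetry reduction over the first $k$ positions being an additional preliminary simplification.
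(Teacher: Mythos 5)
Your construction is based on the wrong mechanism and would not produce a $\Theta(1)$ bias. You propose to exploit the Chebyshev shifts $C_q(a)$ by taking $a_1,\dots,a_k$ to be quadratic residues and $a_{k+1},\dots,a_n$ non-residues, so that the random variables $X(q,a_i)$ for $i\le k$ are shifted down relative to those for $i>k$. But after the normalization that matters (division by $\sqrt{\var(q)}\asymp\sqrt{\varphi(q)\log q}$), these shifts have magnitude $\ll q^{\epsilon}/\sqrt{\varphi(q)\log q}\ll 1/\varphi(q)^{1/2-\epsilon}$, which is negligible; the paper explicitly notes in section~\ref{pnrsetup} that the shifts $C_q(a)$ can be ignored as $q\to\infty$. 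Your claimed per-comparison bias of order $1-c/\sqrt{\log q}$ is unjustified: a mean displacement of size $\mu\ll 1/\sqrt{\varphi(q)\log q}$ at a threshold $x\approx\sqrt{2\log n}$ perturbs $\Phi(x)$ by roughly $\mu e^{-x^2/2}\approx\mu/n$, so the accumulated effect over $n-k$ comparisons is $\exp\{-O(\mu)\}=1+o(1)$, far too weak. (Had a $1-c/\sqrt{\log q}$ per-comparison bias been real, accumulating it over $n-k\gg\varphi(q)^{\varepsilon}$ comparisons would actually make $\delta_k$ vanish much faster than $(1-c_\varepsilon)(n-k)!/n!$, which should have been a warning sign.)

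The actual source of the bias in the paper is not the means but the covariance. The paper takes $a_1=1$, $a_2=-1$ (and $a_j=(p_1p_2)^j$ for $3\le j\le k$), so that by \eqref{LargeCov} the covariance $B_q(1,-1)=-(\log 2)\varphi(q)+O(\log^2 q)$ is as large (in absolute value) as possible; after normalization this gives a correlation $r_{1,2}=-(\log 2+o(1))/\log q$, which is of the maximal order $1/\log q$ and \emph{not} washed out by the normalization. Lemma~\ref{NonStandardDensity} then shows the joint density of $(Z_1,Z_2)$ picks up an extra factor $\exp\bigl(-x_1x_2(\log 2+o(1))/\log q\bigr)$, and on the range where the $\delta_k$-integral concentrates (namely $x_1,x_2\approx\sqrt{2\log n}$) this factor is $\exp\{-\Theta(\log n/\log q)\}=1-\Theta(\varepsilon)$ once $n\ge\varphi(q)^{\varepsilon}$. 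The paper also reduces the general fixed $k\ge 2$ to the case $k=2$ by the same permutation-counting argument used to deduce Theorem~\ref{BigBias} from Theorem~\ref{OrderFirstk2}. Your symmetry reduction over the first $k$ positions and the use of Normal Approximation Result~1 and the correlation estimates are in the right spirit, but without the covariance-based construction there is nothing for those tools to amplify.
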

Theorem \ref{BigBias} can be deduced from Theorem \ref{OrderFirstk2} as follows. Take $k=2$, let $\varepsilon>0$ be suitably small, and suppose first that $\varphi(q)^{\varepsilon} \leq n <\varphi(q)^{1/41}$ is an integer. Then it follows from Theorem \ref{OrderFirstk2} that there exists an $n$-tuple $(a_1,\dots,a_n)\in\mathcal{A}_n(q)$ such that 
\begin{equation}\label{BiasedDensity2}
 \delta_2(q; a_1, \dots, a_n)<\big(1-c_{\varepsilon}\big)\frac{(n-2)!}{n!}
 \end{equation} for some positive constant $c_{\varepsilon}$. Let $S_{n-2}$ be the symmetric group on $n-2$ elements. Since the logarithmic density of the set of real numbers $x\geq 2$ for which $\pi(x;q,a)=\pi(x;q,b)$ is $0$ (which follows from equation \eqref{DensityMeasure} below), we get
$$ \delta_2(q; a_1, \dots, a_n)= \sum_{\sigma\in S_{n-2}}\delta(q; a_1, a_2, a_{\sigma(3)}, \dots, a_{\sigma(n)}).$$
Thus by \eqref{BiasedDensity2}, there exists $\sigma\in S_{n-2}$ for which 
$$ \delta(q; a_1, a_2, a_{\sigma(3)}, \dots, a_{\sigma(n)})< \big(1-c_{\varepsilon}\big)\frac{1}{n!},$$
completing the proof of Theorem \ref{BigBias} provided $n < \varphi(q)^{1/41}$.

However, if $n$ is larger then we can set $m := \lfloor \varphi(q)^{1/41} \rfloor - 1$, and note that by the previous discussion there exists an $m$-tuple $(a_1,\dots,a_m)\in\mathcal{A}_m(q)$ for which
$$ \delta(q;a_1, \dots, a_m)<\big(1-c \big)\frac{1}{m!} . $$
Then if we choose any other coprime residues $a_{m+1}, ..., a_n$ mod $q$, we have
$$ \delta(q;a_1, \dots, a_m) = \sum_{\substack{\sigma \in S_{n} : \\ \sigma^{-1}(1) > \sigma^{-1}(2) > ... > \sigma^{-1}(m)}} \delta(q; a_{\sigma(1)}, a_{\sigma(2)}, \dots, a_{\sigma(n)}) . $$
There are $n!/m!$ terms in the sum, so it follows that for at least one permutation $\sigma$ we must have $\delta(q; a_{\sigma(1)}, a_{\sigma(2)}, \dots, a_{\sigma(n)}) < (1-c)\frac{1}{n!}$, as claimed.
\qed

\vspace{12pt}
Next we shall try to indicate the main ideas in the proofs of our theorems.

The work of Rubinstein and Sarnak~\cite{RuSa} showing the existence of the logarithmic densities $\delta(q;a_1, \dots, a_n)$ (assuming GRH and LI) in fact shows that
\begin{equation}\label{RubinSarnak}
 \delta(q;a_1, \dots, a_n) = \p(X(q,a_1) > X(q,a_2) > ... > X(q,a_n)) , 
 \end{equation}
where each $X(q,a)$ is a sum of the same independent random variables twisted by certain arithmetic coefficients depending on $q$ and $a$. Using a quantitative multivariate form of the central limit theorem (which we extract from Stein's method, replacing direct and somewhat messy characteristic function calculations in the previous literature), one can replace the $X(q,a)$ by jointly Gaussian random variables with the same means and covariances. Since the behaviour of Gaussians is entirely determined by those means and covariances, our task is then to obtain as much information as possible about them (on the number theory side), and deduce the best results we can on the ordering probabilities (on the probabilistic side).

Theorem \ref{FullRace} uses a relatively naive probabilistic treatment, namely a direct estimation of the relevant multivariate Gaussian density. The improvement over the result of Lamzouri~\cite{La1} comes from substantially improved estimates for the average size of the covariances feeding into that density (they are typically small, so we are close to a standard multivariate Gaussian). These estimates rely on a harmonic analysis lemma related to the Hardy--Littlewood method, which we use to deduce that the differences $a_i - a_j$ cannot too often be divisible by large divisors of $q$.

All of our other theorems exploit more sophisticated tools for comparing a multivariate Gaussian distribution with the standard multivariate Gaussian, such as the famous comparison lemmas of Slepian (see e.g. Piterbarg~\cite{pit}) and Li and Shao~\cite{lishao}. However, none of these tools seem directly able to prove our theorems, because the probabilities of the events we are interested in are rather small and the bounds we have on the off-diagonal covariances are comparatively large. For example, in the case of Theorem \ref{Leader} we need to show that
$$ \delta_1(q; a_1, \dots, a_n) = \p(X(q,a_1) > \max_{2 \leq j \leq n} X(q,a_j)) = \frac{1}{n}(1+o(1)) , $$
where potentially $1/n$ is as small as $1/\varphi(q)^{1/32} = 1/q^{1/32 + o(1)}$, but where the largest off-diagonal covariances of the $X(q,a)$ (when they are normalised to have variance 1) are $\asymp 1/\log q$. To address this problem, we observe that if $Z_1, Z_2 , ..., Z_n$ are independent standard normal random variables, then with very high probability we have
$$ \max_{2 \leq j \leq n} Z_j = \sqrt{(2-o(1))\log n} , \;\;\;\;\; \text{and} \;\;\;\;\; \p(Z_1 > \sqrt{(2-o(1))\log n}) = \frac{1}{n^{1+o(1)}} . $$
In other words, {\em most of the size $1/n$ of $\p( Z_1 > \max_{2 \leq j \leq n} Z_j)$ is determined just by the probability of $Z_1$ being large enough to possibly be the leader}. This means that we can ``factor out'' most of the small size of our target probability $1/n$ by first conditioning on $X(q,a_1)$ being roughly large enough to be in the lead, leaving a more achievable error bound to be obtained from comparison inequalities. Since the $X(q,a_i)$ are not really independent of one another, this conditioning step itself requires some work and the use of Slepian's Lemma. Theorem \ref{probleader} is a general probabilistic statement that we will prove using these methods.

Our result on the first $k$ places in the race, Theorem \ref{OrderFirstk1}, is proved by combining the direct density arguments of Theorem \ref{FullRace} with the conditioning arguments of Theorem \ref{Leader}. However, since the size of our target probability is now much smaller compared with $n$ the latter part of the argument becomes more challenging, and in fact will be the hardest element of this paper. In particular, we need to prove a modified normal comparison lemma incorporating within it an application of Slepian's lemma, and when applying this we exploit the remarkable known fact that if the correlations of the random variables $X(q,a)$ have positive sign, then they are very small.

Finally, the negative result Theorem \ref{OrderFirstk2} is proved by noting that in an event
$$ X(q,a_1) > X(q,a_2) > ... > X(q,a_k) > \max_{k+1 \leq j \leq n} X(q,a_j) , $$
with very high probability the random variables $X(q,a_1), ..., X(q,a_k)$ will have (normalised) size $\sqrt{(2-o(1))\log n}$, and so in the relevant probability integral there will be terms in the exponential of size $\asymp \log n$. In particular, if there is a correlation between the random variables $X(q,a_i)$ of size about $1/\log q$, this will appear in the density in the exponential and will noticeably distort the multivariate normal probability if $(\log n)/\log q$ isn't small. So by choosing the tuple $(a_1,...,a_n)$ so that there is a correlation of size $\asymp 1/\log q$, which is (the maximum) possible, we obtain an ordering probability that does not converge to uniformity.

\vspace{12pt}
We end by explaining the organisation of the rest of the paper. In section \ref{pnrsetup} we explicitly state the correspondence between logarithmic densities in prime races and orderings of suitable random variables. In section \ref{covarsec} we prove average estimates for the covariances of those random variables. Section \ref{probtoolssec} contains various probabilistic tools tailored to our needs, as well as the proof of Theorem \ref{probleader} and the new probabilistic preparations for Theorem \ref{OrderFirstk1}. Sections \ref{secFullRace} and \ref{secLeader} are relatively short, and contain the deductions of Theorems \ref{FullRace} and \ref{Leader}. Finally, section \ref{secorderfirstk} contains the somewhat difficult proof of Theorem \ref{OrderFirstk1}, and section \ref{secdeviation} has the proof of our negative result, Theorem \ref{OrderFirstk2}.

We have tried to use notation that will not cause confusion between readers from a more number theoretic or a more probabilistic background, but two brief remarks might be in order. Firstly, we use Vinogradov's notation $\ll$, which has the same meaning as the ``big Oh'' notation (thus $x \ll x/10$, for example). In particular, $\ll$ does {\em not} mean ``much less than''. Secondly, if the implicit constant in a statement depends on some ambient parameter, we may adorn the notation with that parameter to reflect the dependence (e.g. we might write $f(x) = O_{\epsilon}(x^{\epsilon})$, meaning that $|f(x)| \leq C(\epsilon) x^{\epsilon}$ for some $C(\epsilon)$).

\section{Logarithmic densities of prime races and corresponding random variables}\label{pnrsetup}

Let $a_1,\dots, a_n$ be distinct reduced residues modulo $q$, and define
$$ E_{q;a_1,\dots,a_n}(x):=\Big(E(x;q,a_1), \dots, E(x;q,a_n)\Big),$$
where
$$ E(x;q,a):=\frac{\log x}{\sqrt{x}}\left(\varphi(q)\pi(x;q,a)-\pi(x)\right) , $$
and $\pi(x)$ denotes the total number of primes less than $x$. It turns out that the normalization is such that, if we assume GRH, $E_{q;a_1,\dots,a_n}(x)$ varies roughly boundedly as $x$ varies. Notice also that
$$ \pi(x;q,a_1) > \pi(x;q,a_2) > ... > \pi(x;q,a_n) \;\; \iff \;\; E(x;q,a_1) > E(x;q,a_2) > ... > E(x;q,a_n) . $$

For a nontrivial Dirichlet character $\chi$ modulo $q$, we denote by $\{\gamma_{\chi}\}$ the sequence of imaginary parts of the nontrivial zeros of  $L(s,\chi)$. If we assume LI then all of the non-negative values of $\gamma_{\chi}$ are linearly independent over $\Q$, and in particular are distinct. Let $\chi_0$ denote the principal character modulo $q$ and define $\Gamma= \bigcup_{\chi\neq \chi_0\bmod q}\{\gamma_{\chi}\}$. Furthermore, let  $\{U(\gamma_{\chi})\}_{\gamma_{\chi}\in \Gamma, \gamma_\chi > 0}$ be a sequence of independent random variables uniformly distributed on the unit circle.
The work of Rubinstein and Sarnak \cite{RuSa} implies, under GRH and LI, that for any Lebesgue measurable set $S\subset \mathbb{R}^n$ whose boundary has measure zero, the logarithmic density 
$$ \lim_{X\to\infty} \int_{\substack{x\in [2,X] \\ E_{q;a_1,\dots,a_n}(x)\in S}} \frac{dx}{x} =: \delta_{q;a_1,\dots,a_n}(S) $$
exists. Moreover, it follows from their work that
\begin{equation}\label{DensityMeasure}
\delta_{q;a_1,\dots,a_n}(S)= \int_{S} d\mu_{q;a_1,\dots,a_n},
\end{equation} where $\mu_{q;a_1,\dots,a_n}$ is the probability measure corresponding to the random vector 

\noindent $\big(X(q,a_1),\dots,X(q,a_n)\big)$,
where
$$ X(q,a):= -C_q(a)+ \sum_{\substack{\chi\neq \chi_0\\ \chi\pmod q}}\re \left(2\chi(a)\sum_{\gamma_{\chi}>0} \frac{U(\gamma_{\chi})}{\sqrt{\frac14+\gamma_{\chi}^2}}\right),$$
with $C_q(a):=-1+ |\{b \pmod q: b^2\equiv a \pmod q\}|$. Note that for $(a,q)=1$ the function $C_q(a)$ takes only two values: $C_q(a)=-1$ if $a$ is a non-square modulo $q$, and $C_q(a)=C_q(1)$ if $a$ is a square modulo $q$. An elementary argument shows that $C_q(a)<d(q)\ll_{\epsilon}q^{\epsilon}$ for any $\epsilon>0$, where $d(q)=\sum_{m|q}1$ is the usual divisor function. Thus it will turn out that the shifts $C_q(a)$ can essentially be ignored when $q \rightarrow \infty$.

Let $\text{Cov}_{q;a_1,\dots,a_n}$ be the covariance matrix of $\big(X(q,a_1),\dots,X(q,a_n)\big)$. Then a straightforward computation (see also Lemma 2.1 of \cite{La1}, for example) shows that 
$$\textup{Cov}_{q;a_1,\dots,a_n}(i,j)= \begin{cases}  \var(q) &\text{ if } i=j \\ B_q(a_i,a_j) &\text{ if } i\neq j,\end{cases}$$
where
$$
\var(q):=2\sum_{\substack{\chi\neq \chi_0\\ \chi\pmod q}}\sum_{\gamma_{\chi}>0}\frac{1}{\frac14+\gamma_{\chi}^2}, \text { and }
B_q(a,b):=\sum_{\substack{\chi\neq \chi_0 \\ \chi\pmod q}}\sum_{\gamma_{\chi}>0}\frac{\chi\left(\frac{b}{a}\right)+\chi\left(\frac{a}{b}\right)}{\frac14 +\gamma_{\chi}^2}.
$$
We end this section by recording several basic estimates for the quantities $\var(q)$ and $B_q(a, b)$ that will be useful in our subsequent work.
\begin{lem}\label{VarianceZeroes}
Assume GRH. Then for any non-principal character $\chi \pmod q$,
\begin{equation}\label{BoundSumZeros}
\sum_{\gamma_{\chi}>0}\frac{1}{\frac14+\gamma_{\chi}^2}\ll \log q.
\end{equation}
Moreover, we have 
\begin{equation}\label{AsympVariance}
\var(q)\sim \varphi(q)\log q \;\;\;\;\; \text{as} \; q \rightarrow \infty .
\end{equation} 
\end{lem}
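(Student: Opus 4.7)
My plan has two parts corresponding to the two conclusions. For the upper bound \eqref{BoundSumZeros}, I would apply the standard Riemann--von Mangoldt zero-counting estimate for Dirichlet $L$-functions: for any non-principal $\chi\pmod q$, the number of nontrivial zeros $\rho=1/2+i\gamma_\chi$ with $T\leq \gamma_\chi<T+1$ is $\ll \log(q(|T|+2))$. Splitting $\sum_{\gamma_\chi>0}1/(\tfrac14+\gamma_\chi^2)$ into unit intervals and using $1/(\tfrac14+\gamma^2)\ll 1/(1+k^2)$ on $\gamma\in [k,k+1)$ yields
$$\sum_{\gamma_\chi>0}\frac{1}{\tfrac14+\gamma_\chi^2}\ll \sum_{k\geq 0}\frac{\log(q(k+2))}{1+k^2}\ll \log q.$$

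For the asymptotic \eqref{AsympVariance}, the first step is a reduction to primitive characters: each non-principal $\chi\pmod q$ is induced by a unique primitive character $\chi^*$ of some conductor $d|q$ with $d>1$, and the nontrivial zeros of $L(s,\chi)$ and $L(s,\chi^*)$ coincide. Pairing each complex character with its conjugate (and using reflection symmetry for real ones) converts $\sum_{\gamma_\chi>0}$ into $\tfrac12$ of the sum over both positive and negative $\gamma_\chi$ on average. Letting $\varphi^*(d)$ denote the number of primitive characters modulo $d$, one then obtains
$$\var(q) = \sum_{\substack{d|q \\ d>1}}\,\sum_{\chi^*\text{ prim. mod }d}\,\sum_{\text{all }\gamma_{\chi^*}}\frac{1}{\tfrac14+\gamma_{\chi^*}^2}.$$

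The key analytic input is the per-character asymptotic
$$\sum_{\text{all }\gamma_{\chi^*}}\frac{1}{\tfrac14+\gamma_{\chi^*}^2}=\log d+O(\log\log d),$$
valid under GRH for every primitive $\chi^*$ modulo $d>1$. I would derive this from the Hadamard product $\xi(s,\chi^*)=e^{A+Bs}\prod_\rho(1-s/\rho)e^{s/\rho}$ of the completed $L$-function $\xi(s,\chi^*)=(d/\pi)^{(s+a)/2}\Gamma((s+a)/2)L(s,\chi^*)$. The functional equation $\xi(s,\chi^*)=W(\chi^*)\xi(1-s,\overline{\chi^*})$ forces $\xi'/\xi(1/2,\chi^*)$ to be purely imaginary; taking the real part of the logarithmic derivative of the Hadamard product at $s=1/2$ on GRH (where $\rho=1/2+i\gamma$) yields $\sum_\rho 1/(\tfrac14+\gamma^2)=-2\,\re B(\chi^*)$. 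Applying the functional equation a second time at $s=0$ rewrites $B(\chi^*)=\xi'/\xi(0,\chi^*)=-\xi'/\xi(1,\overline{\chi^*})$; expanding through the $\Gamma$-factor and $L'/L(1,\overline{\chi^*})$, together with Littlewood's GRH-conditional bound $L'/L(1,\chi)\ll \log\log d$, produces the claimed asymptotic.

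To finish, I sum over $d|q$ weighted by $\varphi^*(d)$. The divisor identity $\sum_{d|q}\varphi^*(d)\log d = \varphi(q)\log q - \sum_{d|q}\varphi^*(d)\log(q/d)$, combined with the Dirichlet-convolution computation $\varphi^**\log = (\mu*\varphi)*\log = \varphi*\Lambda$, collapses the correction term to $\sum_{a|q}\varphi(q/a)\Lambda(a) = \varphi(q)\sum_{p|q}(\log p)/(p-1) \ll \varphi(q)\log\log q$. Thus $\var(q) = \varphi(q)\log q\,(1+O(\log\log q/\log q))$, proving \eqref{AsympVariance}. I expect the main obstacle to be the per-primitive-character asymptotic with precise main term $\log d$: a direct integration against the density of zeros only gives an $O(\log d)$ bound (of the same order as the target), so one really needs the Hadamard product, two applications of the functional equation, and Littlewood's conditional estimate on $L'/L(1,\chi)$, while the remaining steps are routine Riemann--von Mangoldt counting or elementary arithmetic manipulation.
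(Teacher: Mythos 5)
The paper itself does not prove this lemma: its ``proof'' is the single sentence ``These estimates follow from Lemma 3.1 of \cite{La2}, for example.'' So there is no in-text argument to compare against, and what you have done is reconstruct from scratch the underlying derivation. Your route --- Riemann--von Mangoldt counting for \eqref{BoundSumZeros}, then reduction to primitive characters, the per-primitive-character evaluation $\sum_{\rho}|\rho|^{-2}=-2\re B(\chi^*)=\log d+O(\log\log d)$ via the Hadamard product, two applications of the functional equation, and Littlewood's conditional bound on $L'/L(1,\chi)$, finishing with the Dirichlet-convolution identity $\varphi^{*}*\log=\varphi*\Lambda$ --- is exactly the standard argument and is almost certainly what Lemma 3.1 of \cite{La2} does. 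I checked the arithmetic identity $(\varphi*\Lambda)(q)=\varphi(q)\sum_{p\mid q}(\log p)/(p-1)\ll\varphi(q)\log\log q$ and the Hadamard/functional-equation computation, and both are correct.

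There is one genuine (though minor and well-known) gap: the step where you pass from $\var(q)=2\sum_{\chi}\sum_{\gamma_{\chi}>0}(\tfrac14+\gamma_{\chi}^{2})^{-1}$ to $\sum_{\chi}\sum_{\text{all }\gamma_{\chi}}(\tfrac14+\gamma_{\chi}^{2})^{-1}$. Pairing $\chi$ with $\bar\chi$ (and using the $\gamma\leftrightarrow-\gamma$ symmetry for real $\chi$) gives this exactly for the zeros with $\gamma_{\chi}\neq 0$, but silently drops any zeros at the central point $s=\tfrac12$. Under GRH alone (without LI) such zeros can occur, and they contribute $4\sum_{\chi\bmod q}\operatorname{ord}_{s=1/2}L(s,\chi)$ to the right-hand side but nothing to $\var(q)$. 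The crude pointwise bound $\operatorname{ord}_{s=1/2}L(s,\chi)\ll\log q$ only gives $O(\varphi(q)\log q)$ for this discrepancy, which is the same order as the main term, so it does not close the argument. One needs the additional (standard, but separate) input that the total order of central vanishing over all $\chi\bmod q$ is $o(\varphi(q)\log q)$ --- e.g.\ the classical bound $\sum_{\chi\bmod q}\operatorname{ord}_{s=1/2}L(s,\chi)\ll\varphi(q)$, provable from the explicit formula with a suitable test function. Once that is inserted, your proof is complete. Everything else --- including your observation that a naive density integration only gives $O(\log d)$ rather than the needed main term $\log d$, so the Hadamard/functional-equation argument is genuinely necessary --- is accurate.
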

\begin{proof}
These estimates follow from Lemma 3.1 of \cite{La2}, for example.
\end{proof}

\begin{lem}\label{CovarianceEntries}
Assume GRH. For all distinct reduced residues $a, b$ mod $q$, we have 
\begin{equation}\label{BoundCov}
B_q(a.b)\ll \varphi(q).
\end{equation}

Moreover, if $a, b$ are distinct residue classes such that $1\leq |a|<|b|\leq q/2$ and $|b|/|a|$ is not a prime power, then we have 
\begin{equation}\label{SmallCov}
B_q(a, b)\ll |b|(\log q)^2.
\end{equation}
On the other hand we have 
\begin{equation}\label{LargeCov}
B_q(1, -1)= -(\log 2)\varphi(q)+O\left((\log q)^2\right).
\end{equation}

Finally, if $a,b$ are distinct residue classes and $B_{q}(a,b) \geq 0$ then
\begin{equation}\label{PosImpliesSmall}
B_{q}(a,b) \ll \log q .
\end{equation}
\end{lem}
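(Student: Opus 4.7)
The plan is to express $B_q(a,b)$ as a character sum weighted by zero-sums of Dirichlet $L$-functions, apply an explicit formula for the individual zero sums, and then carry out the character sum using orthogonality.

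Using $\chi(a/b) = \overline{\chi(b/a)}$ and re-indexing $\chi \mapsto \bar\chi$ in one of the two terms in the definition of $B_q$, I first rewrite
\[
B_q(a,b) = \sum_{\chi \neq \chi_0} \chi(c)\, R(\chi), \qquad c := ba^{-1} \pmod q,
\]
where $R(\chi) := S(\chi) + S(\bar\chi) = \sum_\rho 1/|\rho|^2$ ranges over the nontrivial zeros of $L(s,\chi)$. Taking the logarithmic derivative of the Hadamard factorization of the completed $L$-function $\xi(s,\chi)$ at $s=1$ and combining with the functional-equation identity $\mathrm{Re}\, b(\chi) = -R(\chi)/2$ (available under GRH), I then derive an explicit formula of the shape
\[
R(\chi) = \log q + \kappa_{\mathfrak{a}(\chi)} + 2\,\mathrm{Re}\,\frac{L'(1,\chi)}{L(1,\chi)} + O(1),
\]
where $\mathfrak{a}(\chi)\in\{0,1\}$ is the parity of $\chi$ and $\kappa_0 - \kappa_1 = \psi(1/2)-\psi(1) = -2\log 2$. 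This single gap between the even and odd archimedean constants will be the only source of the $\log 2$ eventually appearing in \eqref{LargeCov}, and under GRH the term $L'(1,\chi)/L(1,\chi)$ is $\ll \log\log q$.

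Substituting and using the orthogonality identities
\[
\sum_{\chi\neq\chi_0}\chi(c) = -1, \qquad \sum_{\chi\ \mathrm{even},\neq\chi_0}\chi(c) = \tfrac{\varphi(q)}{2}\mathbf{1}[c\equiv-1] - 1
\]
(both for $c\not\equiv 1 \pmod q$), the $\log q$ and parity-constant contributions to $B_q(a,b)$ simplify to $-\varphi(q)\log 2\cdot\mathbf{1}[c\equiv-1] + O(\log q)$, which already supplies the main term in \eqref{LargeCov}. The residual piece $2\sum_{\chi\neq\chi_0}\chi(c)\,\mathrm{Re}\,L'(1,\chi)/L(1,\chi)$ I would control by expanding the Dirichlet series $L'(s,\chi)/L(s,\chi) = -\sum_n \Lambda(n)\chi(n) n^{-s}$ for $\mathrm{Re}(s)>1$, swapping the sums and applying orthogonality to get
\[
-\varphi(q)\sum_{n\equiv c^{-1}\,(q)}\Lambda(n) n^{-s}\ +\ \sum_{(n,q)=1}\Lambda(n) n^{-s},
\]
then letting $s\to 1^+$ so that the $1/(s-1)$ divergences of the two prime sums cancel. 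Under GRH the limiting value is $-\varphi(q) M(q,c) + O((\log q)^2)$, where $M(q,c) \geq 0$ is a convergent prime-power sum over $p^k \equiv c^{\pm 1}\pmod q$, with $M(q,c) \ll 1$ uniformly. Combined with the preceding paragraph this yields \eqref{BoundCov}; in the case $c \equiv -1$ it gives \eqref{LargeCov}, since then $p^k \equiv -1 \pmod q$ forces $p^k \geq q-1$, making $M(q,-1) = O(\log q/\sqrt q)$ negligible.

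For \eqref{SmallCov}, the hypothesis that $|b|/|a|$ is not a prime power rules out $\pm b/a$ as prime-power representatives of $c^{\pm 1} \pmod q$, so the smallest $p^k$ congruent to $c^{\pm 1}$ has size at least $q - O(|b|)$; this bounds $M(q,c) \ll |b|(\log q)/\varphi(q)$ and hence $B_q(a,b) \ll |b|(\log q)^2$. For \eqref{PosImpliesSmall}, non-negativity of $M(q,c)$ makes the main term $-\varphi(q) M(q,c)$ a priori non-positive, and so the hypothesis $B_q(a,b) \geq 0$ forces $M(q,c)$ to be absorbed into the error term, leaving only $B_q(a,b) \ll \log q$. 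The main obstacle throughout is the regularization of the $L'/L$ Dirichlet series: one must cancel the logarithmic divergences of the two prime sums cleanly and extract the $O((\log q)^2)$ remainder, which requires an effective prime number theorem in arithmetic progressions with $q^{-1/2+\epsilon}$ savings and therefore depends critically on GRH. Sharpening this error from $(\log q)^2$ down to $\log q$ in order to obtain \eqref{PosImpliesSmall} is a secondary technical point that may need a dedicated argument exploiting the positivity hypothesis directly.
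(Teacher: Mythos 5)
The paper's own ``proof'' is a one-line citation of Corollary~5.4, Proposition~6.1, and Remark~5.1 of Lamzouri~\cite{La2}, so you are in effect reconstructing the argument of that reference from scratch. Your strategy --- rewriting $B_q(a,b)=\sum_{\chi\neq\chi_0}\chi(c)R(\chi)$ with $c=ba^{-1}$ and $R(\chi)=\sum_\rho |\rho|^{-2}$, expressing $R(\chi)$ via the Hadamard factorisation, and then summing over $\chi$ by orthogonality --- is indeed the route taken in that literature, and several pieces (the rewrite, the parity gap $\psi(1/2)-\psi(1)=-2\log 2$, the Dirichlet-series treatment of $L'/L$) are correctly set up.

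There is, however, a genuine error in the key explicit formula. The Hadamard factorisation produces $\log(q^{*}/\pi)$, where $q^{*}$ is the \emph{conductor} of $\chi$, not $\log q$; and $\log q - \log q^{*}$ is not $O(1)$ --- it can be as large as $\log q$ for imprimitive characters of small conductor. This character-dependent discrepancy does not wash out in the character sum. A direct orthogonality computation gives
$$ \sum_{\chi\neq\chi_0}\chi(c)\log q^{*} \;=\; -\sum_{p^j\mid q}\log p\cdot\varphi(q/p^j)\,\mathbf{1}\bigl[c\equiv 1 \bmod q/p^j\bigr] \;+\; O(\log q), $$
and the sum on the right is genuinely of size $\Theta(\varphi(q))$ whenever, e.g., $c\equiv 1\bmod q/2$. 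So your decomposition of $B_q(a,b)$ omits a $\varphi(q)$-sized contribution. This is precisely the $M_2(q;\cdot,\cdot)$ term that appears in Proposition~5.1 of~\cite{La1}, which the paper itself uses a few lines later (Lemma~\ref{simplercorrupper}); so the omitted quantity is not negligible and is an integral part of the correct formula. As it happens, the sign works in your favour: the conductor contribution, like the $-\varphi(q)M(q,c)$ term and the parity term with $c=-1$, is non-positive, so the logic of your deductions of \eqref{BoundCov}, \eqref{LargeCov}, and \eqref{PosImpliesSmall} survives once the term is added in and bounded (and for \eqref{SmallCov} one checks that $c\equiv 1\bmod q/p^j$ forces $q/p^j\mid(b-a)$, hence $\varphi(q/p^j)\ll|b|$). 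But the explicit formula as you stated it is simply wrong, and the remainder of the proof must be reworked to carry the extra term. A secondary point: your claim that the regularised $L'/L$ sum equals $-\varphi(q)M(q,c)+O((\log q)^2)$ with $M\geq 0$ is correct in spirit, but a formal $s\to 1^{+}$ limit does not deliver it; one needs a smoothed explicit formula under GRH (as in Lemma~5.3 of~\cite{La1}), and you should also handle imprimitive $\chi$ in the bound $L'(1,\chi)/L(1,\chi)\ll\log\log q$ by separating the Euler factors at $p\mid q$.
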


\begin{proof}
The first bound \eqref{BoundCov} corresponds to Corollary 5.4 of \cite{La2}. The estimates \eqref{SmallCov} and \eqref{LargeCov} follow from Proposition 6.1 of \cite{La2}. The fact \eqref{PosImpliesSmall} that positive correlations are always very small is noted in Remark 5.1 of \cite{La2}, for example.
\end{proof}

\section{An average result for the sums of the covariances $B_q(a_i, a_j)$}\label{covarsec}

\subsection{A double average}
In view of Lemma \ref{CovarianceEntries}, all of the non-diagonal covariances in our prime number race satisfy (when normalised by the variance $\var(q)$)
$$ \frac{|B_q(a_i,a_j)|}{\var(q)} \ll \frac{1}{\log q} , \;\;\;\;\; i \neq j . $$
This bound is useful, but to obtain strong results we will need to exploit the fact that, if we are looking at many residue classes $a_1 , ..., a_n$, the covariances will on average be much smaller. This was established in Theorem 5 of Lamzouri~\cite{La2} when averaging over {\em all} pairs of distinct reduced residue classes, but we will need a strong result when averaging only over a subset, which requires quite different methods.

\begin{correst1}
Assume GRH. Let $q$ be large and let $r,s \geq 1$. For any collections $a_{1},...,a_{r}$ and $b_{1},...,b_{s}$ of distinct reduced residue classes modulo $q$, we have
$$ \sum_{1 \leq j \leq r} \sum_{\substack{1 \leq k \leq s, \\ b_{k} \neq a_{j}}} \frac{|B_{q}(a_{j},b_{k})|}{\var(q)} \ll \frac{\sqrt{rs} \log^{2}(2rs)}{\log q} . $$

In particular, we have
$$ \sum_{\substack{1 \leq k \leq s, \\ b_{k} \neq a_{1}}} \frac{|B_{q}(a_{1},b_{k})|}{\var(q)} \ll \frac{\sqrt{s} \log^{2}(2s)}{\log q} , \;\;\; \text{and} \;\;\; \sum_{\substack{1 \leq j,k \leq r, \\ j \neq k}} \frac{|B_{q}(a_{j},a_{k})|}{\var(q)} \ll \frac{r \log^{2}(2r)}{\log q} . $$
\end{correst1}

Note that this estimate saves roughly a factor of $\sqrt{rs}$ as compared with a trivial treatment using the pointwise bound $\frac{|B_{q}(a_{j},b_{k})|}{\var(q)} \ll 1/\log q$.

\vspace{12pt}
To prove Correlation Estimate 1 we shall need two lemmas. The first is the following, which will reduce the problem to upper bounding some easier sums.
\begin{lem}\label{simplercorrupper}
Assume GRH. In the setting of Correlation Estimate 1, for any $1 \leq j \leq r$ we have
$$ \sum_{\substack{1 \leq k \leq s, \\ b_{k} \neq a_{j}}} \frac{|B_{q}(a_{j},b_{k})|}{\varphi(q)} \ll \log^{2}(2s) + \sum_{\substack{1 \leq k \leq s, \\ b_{k} \neq a_{j}}} \frac{\Lambda(q/(q,a_{j}-b_{k}))}{\varphi(q/(q,a_{j}-b_{k}))} , $$
where $\Lambda(n)$ denotes the von Mangoldt function.
\end{lem}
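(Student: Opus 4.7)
The plan is to open up $B_q(a_j, b_k)$ via the Hadamard product formula for Dirichlet $L$-functions (under GRH) and decompose the resulting expression into three natural sub-sums: one gives exactly the $\Lambda$-term on the right-hand side, while the other two together contribute only $O(\log^2(2s))$ after summation over $k$.

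For each primitive non-principal character $\chi^*$ of conductor $d \mid q$, the Hadamard product formula combined with GRH yields
\[
\sum_{\gamma_{\chi^*}} \frac{1}{\tfrac14 + \gamma_{\chi^*}^2} \;=\; \log(d/\pi) + c_{a(\chi^*)} + 2\,\mathrm{Re}\,\frac{L'}{L}(1,\chi^*),
\]
where $c_{a(\chi^*)}$ is a parity-dependent constant from the $\Gamma$-factor (with $a = 0$ for even $\chi^*$ and $a = 1$ for odd $\chi^*$). Combining this with the primitive decomposition of $B_q$ decomposes $B_q(a_j, b_k) = S_1(k) + S_2(k) + S_3(k)$, one sub-sum for each of the three terms above. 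Orthogonality of primitive characters turns the inner character sums into Ramanujan sums $c_d(m-1)$ with $m = b_k/a_j$, and since $(a_j, q) = 1$ one has $(q, m-1) = (q, a_j - b_k) =: g_k$.

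For the logarithmic sub-sum $S_1(k) = \sum_{d \mid q,\, d > 1} \log d \cdot c_d(m - 1)$, interchanging sums and invoking the M\"obius identity $\sum_{f \mid n} \mu(f) \log f = -\Lambda(n)$ collapses everything to $-\sum_{e \mid g_k} e\,\Lambda(q/e)$. This sum is nonzero only when $q/g_k$ is a prime power, in which case direct evaluation gives $|S_1(k)| \ll \varphi(q)\,\Lambda(q/g_k)/\varphi(q/g_k)$, producing (after dividing by $\varphi(q)$ and summing over $k$) the $\Lambda$-term on the right-hand side. The parity sub-sum $S_2(k)$ is handled by expanding the even/odd indicator as $\tfrac12(1 \pm \chi^*(-1))$ and using $\sum_{d \mid q} c_d(n) = q \cdot \mathbf{1}[q \mid n]$: one finds $|S_2(k)| = O(1)$ for every $k$ with $b_k \not\equiv -a_j \pmod q$ and $|S_2(k)| = O(\varphi(q))$ only in the single exceptional case $b_k \equiv -a_j$, so $\sum_k |S_2(k)|/\varphi(q) = O(1)$, absorbed by $\log^2(2s)$.

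The bulk of the work is proving $\sum_k |S_3(k)|/\varphi(q) \ll \log^2(2s)$, which is the main obstacle. I would represent $L'/L(1,\chi^*) = -\sum_n \Lambda(n) \chi^*(n) e^{-n/X}/n + O_{\mathrm{GRH}}((\log qX)^2/\sqrt{X})$ via a contour shift with a smoothing parameter $X$, swap the character- and $n$-sums, and apply orthogonality. The unramified part of $S_3(k)$ collapses to $-2\varphi(q) \sum_{n \equiv a_j/b_k \,(\mathrm{mod}\,q),\,(n,q)=1} \Lambda(n) e^{-n/X}/n$ plus a $k$-independent piece. Introducing signs $\varepsilon_k = \mathrm{sgn}\,S_3(k)$ and reversing the order of summation, each prime power $n$ matches at most one $b_k$ by distinctness, forcing the main contribution to be $\ll \log X$; the smoothing error summed over characters and over the signed $k$-sum is $\ll s(\log qX)^2/\sqrt{X}$, so taking $X$ of the form $s^2(\log q)^{O(1)}$ balances the two and delivers the bound $\ll \log^2(2s)$. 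The ramified prime powers (whose base divides $q$) are dispatched by a parallel orthogonality restricted to characters of conductor coprime to the base, contributing only $O(1)$. The chief subtlety is achieving uniformity all the way down to small $s$ — where $\log^2(2s)$ is itself only a constant — which forces a careful joint tuning of $X$, the smoothing error, and the ramified contribution.
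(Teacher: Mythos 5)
Your decomposition follows essentially the same route as the paper's, which the paper itself obtains by citing Proposition~5.1 and Lemma~5.3 of Lamzouri~\cite{La1}: opening $B_q$ via the Hadamard formula, extracting the $\Lambda(q/(q,a_j-b_k))$ term from the $\log d$ piece through Ramanujan sums and M\"obius, and reducing the rest to smoothed $L'/L(1,\chi)$ sums. Your $S_1, S_2, S_3$ are a re-derivation of those cited results, and the identification of the dominant $\Lambda$-term is correct.

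However, there is a genuine gap in your treatment of $S_3$. You claim ``each prime power $n$ matches at most one $b_k$ by distinctness, forcing the main contribution to be $\ll \log X$'' and then propose balancing against the smoothing error by taking $X = s^2 (\log q)^{O(1)}$. This does not deliver the lemma. With that choice, $\log X \gg \log\log q$, which is \emph{not} $O(\log^2(2s))$ when $s$ is bounded; and if instead you take $X$ large enough (say $X \asymp q^2$) to make the GRH smoothing error $\ll s\log^2 q/q$ genuinely small, then $\log X \asymp \log q$ is even worse. The crude bound ``at most one match per $n$, hence $\ll \log X$'' discards exactly the structure that makes the lemma work. The correct use of distinctness is the one the paper makes via Lemma~5.3 of \cite{La1}: for $n < q$ there is at most one term per residue class, giving a dominant contribution $\Lambda(n_k)/n_k$ where $n_k$ is the least positive residue of $a_j/b_k$ (or $b_k/a_j$) modulo $q$; since the $n_k \geq 2$ are pairwise distinct and $(\log n)/n$ is decreasing, one has
\[
\sum_{k} \frac{\Lambda(n_k)}{n_k} \;\leq\; \sum_{k} \frac{\log n_k}{n_k} \;\ll\; \sum_{m \leq s+1} \frac{\log m}{m} \;\ll\; \log^2(2s),
\]
and the tail from $n \geq q$ contributes $\ll s\log^2 q/q$, which is $\ll \log^2(2s)$ precisely because $s \leq \varphi(q) < q$. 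Your argument as written never isolates the leading term $n_k$, so it loses the monotonicity step that turns ``distinctness'' into the $\log^2(2s)$ saving. To repair the proof you need to split the $n$-sum at $n = q$, extract the single term per class below $q$, and invoke the decreasing-function rearrangement; the $\varepsilon_k$-signs and a single global choice of $X$ of size about $q^2$ then suffice, and no $s$-dependent tuning of $X$ is needed or helpful.
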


\begin{proof}[Proof of Lemma \ref{simplercorrupper}]
Let $x=(q\log q)^2$, and for simplicity of writing set $a = a_j$. Then it follows from Proposition 5.1 of \cite{La1} that 
\begin{equation}\label{explicit}
\begin{aligned}
 \sum_{\substack{1 \leq k \leq s, \\ b_k \neq a}} |B_q(a,b_k)| \ll \varphi(q)\Bigg(&1+\sum_{\substack{1 \leq k \leq s, \\ b_{k} \neq a}} \Bigg(\frac{\Lambda\left(\frac{q}{(q,a-b_k)}\right)}{\varphi\left(\frac{q}{(q,a-b_k)}\right)}+ M_1(q;a,b_k)+M_1(q;b_k,a)\\
 &+  M_2(q;a,b_k)+M_2(q;b_k,a)\Bigg)\Bigg)+s\log q ,
 \end{aligned}
\end{equation}
where 
$$ M_1(q;a,d)= \sum_{\substack{n\leq 2x\log x\\ an\equiv d\pmod q}}\frac{\Lambda(n)}{n}e^{-n/x}, \text{ and } 
M_2(q;a,d)= \sum_{p^{\nu}\parallel q}\sum_{\substack{1\leq e\leq 2\log x\\ap^e\equiv d  \bmod q/p^{\nu}}}\frac{\log p}{p^{e+\nu-1}(p-1)} , $$
and $p^{\nu}\parallel q$ denotes that $p^{\nu}$ is the largest power of $p$ that divides $q$.

Now it follows from Lemma 5.3 of \cite{La1} that
$$ M_1(q;a,b_k)= \frac{\Lambda(n_k)}{n_k}+O\left(\frac{\log^2q}{q}\right) \leq \frac{\log(n_k)}{n_k}+O\left(\frac{\log^2q}{q}\right) , $$
where $n_k$ is the least positive residue of $b_k a^{-1}$ modulo $q$. Since $(\log n)/n$ is a decreasing function for $n\geq 3$, we deduce that
$$ \sum_{\substack{1 \leq k \leq s, \\ b_{k} \neq a}} M_1(q;a,b_k) \ll \sum_{\substack{1 \leq k \leq s, \\ b_{k} \neq a}} \frac{\log(n_k)}{n_k}+ \frac{s\log^2q}{q} \ll 1 + \sum_{k \leq s} \frac{\log k}{k} + \frac{s\log^2q}{q} \ll \log^{2}(2s) + \frac{s\log^2 q}{q} . $$
A similar bound holds for $\sum M_1(q;b_k,a)$. 

Next we bound the sum $\sum (M_2(q;a,b_k) + M_2(q;b_k,a))$. We have
$$ \sum_{k=1}^s M_2(q;a,b_k) = \sum_{p^{\nu}\parallel q}\sum_{e\leq 2\log x}\frac{\log p}{p^{e+\nu-1}(p-1)}\sum_{\substack{1\leq k \leq s\\ap^{e}\equiv b_k  \bmod q/p^{\nu}}} 1 \leq \sum_{p^{\nu}\parallel q}\sum_{e\leq 2\log x}\frac{\log p}{p^{e+\nu-1}(p-1)} \min\{p^{\nu}, s\} , $$
since the $b_k$ are distinct modulo $q$. Splitting the outer sum over the primes $p$ dividing $q$ into the cases $p\leq s$ and $p>s$, we find the above is
$$ \ll \sum_{p\leq s}\sum_{e=1}^{\infty}\frac{\log p}{p^{e-1}(p-1)} + s \sum_{p>s} \sum_{e=1}^{\infty}\frac{\log p}{p^e(p-1)} \ll \sum_{p\leq s}\frac{\log p}{p} + s \sum_{p>s}\frac{\log p}{p^2} \ll \log(2s) . $$
A similar bound holds for $\sum_{k=1}^s M_2(q;b_k,a)$. Putting everything together (and remembering that $s \leq q$, so $(s \log^{2}q)/q \ll \log^{2}(2s)$) completes the proof of Lemma \ref{simplercorrupper}.
\end{proof}

To control the sum on the right hand side in Lemma \ref{simplercorrupper} we shall deploy the following harmonic analysis lemma. This will be the really new aspect of our analysis of the covariances.
\begin{lem}[Following pp 305-307 of Bourgain~\cite{bourgain}, 1989]\label{harmanal}
Let $x$ be large and let $Q \geq 1$. Define
$$ G(\theta) := \sum_{q \leq Q} \frac{\Lambda(q)}{q} \sum_{a=0}^{q-1} \textbf{1}_{||\theta - a/q|| \leq 1/x} , $$
where $||\cdot||$ denotes distance to the nearest integer, and $\textbf{1}$ denotes the indicator function.

Then if $\theta_{1},...,\theta_{R}$ and $\phi_{1},...,\phi_{S}$ are any real numbers that are $1/x$-spaced (i.e. such that $||\theta_{r_1}-\theta_{r_2}|| \geq 1/x$ when $r_1 \neq r_2$, and such that $||\phi_{s_1}-\phi_{s_2}|| \geq 1/x$ when $s_1 \neq s_2$), we have
$$ \sum_{\substack{1 \leq r \leq R, \\ 1 \leq s \leq S}} G(\theta_{r}-\phi_{s}) \ll \sqrt{RS} \log^{2}(2QRS) + \frac{RSQ}{x} . $$
\end{lem}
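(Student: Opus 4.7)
The plan is to combine a smooth Fourier majorant of $\mathbf{1}_{\|\cdot\|\leq 1/x}$ with the orthogonality of additive characters modulo $q$ and Montgomery's large sieve inequality, in the spirit of Bourgain's original argument.

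First, I would majorise: let $T$ be a nonnegative $1$-periodic smooth function (say, a triangular tent of width $4/x$, scaled so $T\ge \mathbf{1}_{\|\theta\|\le 1/x}$) with Fourier coefficients $|\hat T(n)|\ll \min(1/x,\,x/n^2)$. Then $G(\theta)\le \sum_{q\le Q}\tfrac{\Lambda(q)}{q}\sum_{a=0}^{q-1} T(\theta-a/q)$, and using the orthogonality $\sum_{a=0}^{q-1}e(-na/q)=q\,\mathbf{1}_{q\mid n}$ to collapse the $a$-sum gives
\[
\sum_{r,s} G(\theta_r-\phi_s)\le \sum_{n\in\Z} \hat T(n)\,\Psi(n)\,A(n)\overline{B(n)},
\]
where $A(n):=\sum_r e(n\theta_r)$, $B(n):=\sum_s e(n\phi_s)$, and $\Psi(n):=\sum_{q\le Q,\,q\mid n}\Lambda(q)$, so $\Psi(0)\ll Q$ and $\Psi(n)\le \log|n|$ for $|n|\ge 2$. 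The $n=0$ term alone contributes $\hat T(0)\Psi(0)RS\ll RSQ/x$, producing the second error term in the lemma.

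For $n\neq 0$ I would apply Cauchy--Schwarz in $n$ to separate $A$ from $B$ and then bound each factor using the large sieve inequality $\sum_{|n|\le N}|A(n)|^2\ll (N+x)R$, valid since the $\theta_r$ are $1/x$-spaced. Splitting the $n$-sum dyadically, each range $|n|\in [N,2N]$ with $N\le x$ contributes at most $(1/x)\log N\cdot (N+x)R\ll R\log N$, so summing over the $O(\log x)$ such blocks produces $R(\log x)^2$; the tail $|n|>x$ contributes a smaller $O(R\log x)$ thanks to the decay $|\hat T(n)|\ll x/n^2$. Applied to both $A$ and $B$, Cauchy--Schwarz then gives a contribution $\ll \sqrt{RS}(\log x)^2$ to the sum.

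Finally, to convert $\log^2 x$ into $\log^2(2QRS)$ as in the statement, note that $G$ is monotonically non-increasing in $x$, so if $x>2QRS$ I may harmlessly replace $x$ by $x_0:=2QRS$ in the argument; then $\log x_0\ll \log(2QRS)$ and $RSQ/x_0\ll 1$, both terms absorbed into $\sqrt{RS}\log^2(2QRS)+RSQ/x$. The main technical point, and where the hardest bookkeeping lies, is the dyadic decomposition in Step 3: the weight $\Psi(n)$ supplies one logarithmic factor while the summation over $O(\log x)$ dyadic blocks supplies the other, producing exactly the $\log^2$ and not more. A weaker estimate here would be fatal when this harmonic-analytic input is later fed through Lemma~\ref{simplercorrupper} into the covariance estimates of Correlation Estimate 1, so sharpness at this step is essential; everything else is routine Fourier inversion and large-sieve input.
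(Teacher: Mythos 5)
Your majorant-and-orthogonality setup is fine, and for the range $x \leq 2QRS$ the Cauchy--Schwarz/large-sieve/dyadic argument does produce $\sqrt{RS}\log^2 x \ll \sqrt{RS}\log^2(2QRS)$. The problem is the final step where you try to reduce the case $x > 2QRS$ to $x_0 := 2QRS$ by ``monotonicity.'' It is true that $G$ only grows when $x$ is decreased, but the rest of the argument does not survive the replacement. Once you switch to a tent of width $4/x_0$ and work on the frequency range $|n| \lesssim x_0$, the large sieve you have at your disposal is still $\sum_{|n| \le N}|A(n)|^2 \ll (N+x)R$, because the $\theta_r$ are only $1/x$-spaced (which is a \emph{weaker} spacing condition than $1/x_0$-spaced when $x > x_0$, so you cannot upgrade). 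On a dyadic block $|n| \in [N,2N]$ with $N \le x_0 < x$ this gives
\[
\frac{1}{x_0}\log(2N)\,(N+x)R \asymp \frac{x}{x_0}\,R\log(2N),
\]
and after summing the blocks and taking the product of the two square roots you end up with an extra uncontrolled factor of $x/x_0$. In the application inside the paper this matters: there $x = \max\{q,(rs)^2\}$ and $Q = rs$, so $x$ can be as large as $q$ while $QRS$ is polynomial in $rs$, which may be tiny compared to $q$. So your argument proves the lemma only when $x \ll QRS$, and the patch does not close the remaining range.

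The paper's proof gets the $\log^2(2QRS)$ genuinely, and the mechanism is different from anything in your sketch. After the same Fourier collapse, one does not bound $\Psi(k)=\sum_{q\le Q,\,q\mid k}\Lambda(q)$ by $\log|k|$. Instead, one splits $k$ according to whether $\Psi(k) < \log^2(2QRS)$ or not. When $\Psi(k) < \log^2(2QRS)$, a single application of Cauchy--Schwarz plus Parseval (using $L^\infty$ and $L^1$ control on the Beurling--Selberg majorants $I_\theta, I_\phi$) already yields $\sqrt{RS}\log^2(2QRS)$ with no dyadic loss and no $\log x$. The exceptional set $\mathcal{B}:=\{k \neq 0,\ |k|\le x : \Psi(k)\ge\log^2(2QRS)\}$ is then shown to be tiny: any such $k$ satisfies $\log|k|\ge\log^2(2QRS)$ and must have a $Q$-smooth divisor $n \ge \exp\{\log^2(2QRS)\}$, and a standard smooth-number count bounds $\#\mathcal{B}\ll x/(QRS)^{10}$. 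Combined with the trivial pointwise bounds $\Psi(k)\ll Q$, $|\hat I_\theta(k)|\ll R/x$, $|\hat I_\phi(k)|\ll S/x$, the contribution of $\mathcal{B}$ is $\ll 1$. That dichotomy --- a threshold on $\Psi(k)$ together with a smooth-number estimate --- is the key idea missing from your proposal, and it is what makes the $\log^2(2QRS)$ appear uniformly in $x$.
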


Lemma \ref{harmanal} encodes the fact that rationals $a/q$ are well spaced, so it is impossible for lots of the points $\theta_{r}-\phi_{s}$ to be very close to lots of rationals. The proof uses additive characters, the problem being analytically nice because we are looking at pairwise differences, which corresponds to a convolution on the harmonic analysis side. Since Bourgain's argument is given in a very different context, and since our statement of Lemma \ref{harmanal} is also different (in particular through the presence of the weights $\Lambda(q)$), we provide a sketch proof of the lemma in Appendix \ref{analysisappendix}.

\begin{proof}[Proof of Correlation Estimate 1]
We may suppose without loss of generality that $s \geq r$. In view of Lemma \ref{simplercorrupper}, and the fact that $\var(q)\sim \varphi(q)\log q$ that is contained in Lemma \ref{VarianceZeroes}, to prove Correlation Estimate 1 it will certainly suffice to prove that
$$ \sum_{1 \leq j \leq r} \sum_{\substack{1 \leq k \leq s, \\ b_{k} \neq a_{j}}} \frac{\Lambda(q/(q,a_{j}-b_{k}))}{\varphi(q/(q,a_{j}-b_{k}))} \ll \sqrt{rs} \log^{2}(2rs) . $$

Rewriting a little, the double sum is
$$ \leq \sum_{n|q} \frac{\Lambda(n)}{\varphi(n)} \sum_{1 \leq j \leq r} \sum_{\substack{1 \leq k \leq s, \\ b_{k} \neq a_{j}}} \textbf{1}_{a_{j} \equiv b_{k} \; \text{mod} \; q/n} \ll \sum_{n|q} \frac{\Lambda(n)}{n} \sum_{1 \leq j \leq r} \sum_{\substack{1 \leq k \leq s, \\ b_{k} \neq a_{j}}} \textbf{1}_{a_{j} \equiv b_{k} \; \text{mod} \; q/n} , $$
where $\textbf{1}$ denotes the indicator function, and where we used the fact that $\varphi(n) \asymp n$ if $n$ is a prime power (and so $\Lambda(n) \neq 0$). We also observe that we cannot have $a_{j} \equiv b_{k}$ modulo $q/n$ for two values $n$ that are powers of different primes, since in that case we would have $a_{j} \equiv b_{k}$ modulo $q$, which is false by assumption. Therefore the contribution to the sum from those $n > rs$ is trivially
$$ \ll \sum_{1 \leq j \leq r} \sum_{\substack{1 \leq k \leq s, \\ b_{k} \neq a_{j}}} \frac{\log(2rs)}{rs} \leq \log(2rs) , $$
which is acceptable.

To bound the contribution to the sum from those $n \leq rs$, we apply Lemma \ref{harmanal} with the choices $Q=rs$, $x=\max\{q,(rs)^{2}\}$, $\theta_{j} := a_{j}/q$ and $\phi_{k} := b_{k}/q$ for all $j,k$. Thus all the points $\theta_{j}$ and $\phi_{k}$ are indeed $1/x$-spaced, and $a_{j} \equiv b_{k}$ modulo $q/n$ if and only if $\theta_{j}-\phi_{k} = u/n$ for some integer $u$. We deduce that
\begin{eqnarray}
\sum_{n \leq rs, n|q} \frac{\Lambda(n)}{n} \sum_{1 \leq j \leq r} \sum_{\substack{1 \leq k \leq s, \\ b_{k} \neq a_{j}}} \textbf{1}_{a_{j} \equiv b_{k} \; \text{mod} \; q/n} & \leq & \sum_{\substack{1 \leq j \leq r, \\ 1 \leq k \leq s}} \sum_{n \leq rs} \frac{\Lambda(n)}{n} \sum_{u=0}^{n-1} \textbf{1}_{||(\theta_{j}-\phi_{k}) - u/n|| \leq 1/x} \nonumber \\
& \ll & \sqrt{rs} \log^{2}(2rs) + 1 , \nonumber
\end{eqnarray}
which is enough to prove Correlation Estimate 1.
\end{proof}

\subsection{Some matrix estimates involving the covariances}
We will need some information about the determinant and inverse of a covariance matrix that is close to the identity matrix. Let $\mathcal{M}_{n}(\epsilon)$ denote the set of all $n \times n$ symmetric matrices whose diagonal entries are 1, and whose off-diagonal entries have absolute value at most $\epsilon$.
\begin{lem}\label{matrixlem}
If $\epsilon \leq 1/2n$ then for any $A = (a_{j,k}) \in \mathcal{M}_{n}(\epsilon)$ we have
$$ \det(A) = 1 + O\left(\epsilon \sum_{\substack{1 \leq j,k \leq n, \\ j \neq k}} |a_{j,k}| \right). $$

In addition, if $\epsilon \leq 1/2n$ then $A$ is invertible, and if we let $\tilde{a}_{j,k}$ denote the entries of the inverse matrix $A^{-1}$ then we have
$$ \tilde{a}_{j,k} = \left\{ \begin{array}{ll}
     1 + O\left(\epsilon \sum_{\substack{1 \leq l,m \leq n, \\ l \neq m}} |a_{l,m}| \right) & \text{if} \; j=k, \\
     O\left(|a_{j,k}| + \sum_{i \neq j,k} |a_{j,i}||a_{i,k}| + \epsilon^{2} \sum_{\substack{1 \leq l,m \leq n, \\ l \neq m}} |a_{l,m}| \right) & \text{if} \; j \neq k .
\end{array} \right. $$
\end{lem}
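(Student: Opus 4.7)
Write $A = I + B$, where $B$ is symmetric with $b_{j,j} = 0$ and $|b_{j,k}| \leq \epsilon$ off the diagonal. Since $\epsilon \leq 1/(2n)$, Gershgorin's theorem places every eigenvalue $\mu_i$ of $B$ inside $[-(n-1)\epsilon, (n-1)\epsilon] \subseteq [-1/2,1/2]$, so $A$ is invertible, $\|B\|_{\textup{op}} \leq 1/2$, and the Neumann series $A^{-1} = \sum_{m \geq 0}(-B)^m$ converges absolutely.

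For the determinant I would pass to $\log\det(A) = \textup{tr}\log(I+B) = \sum_{k \geq 1}\frac{(-1)^{k+1}}{k}\textup{tr}(B^k)$. The first-order term vanishes because $B$ has zero diagonal, i.e.\ $\textup{tr}(B) = 0$. For $k \geq 2$, the elementary inequality $|\mu_i|^k \leq \|B\|_{\textup{op}}^{k-2}\mu_i^2$ gives $|\textup{tr}(B^k)| \leq \|B\|_{\textup{op}}^{k-2}\textup{tr}(B^2)$, and since $\textup{tr}(B^2) = \sum_{i,j}b_{i,j}^2 \leq \epsilon\sum_{i \neq j}|a_{i,j}|$, summing the resulting geometric series produces $|\log\det(A)| \ll \epsilon\sum_{i \neq j}|a_{i,j}|$. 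Since this upper bound is $\leq 1$, exponentiating yields the determinant claim.

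For the inverse, I would expand entry-wise via Neumann:
$$\tilde{a}_{j,k} = \delta_{j,k} - b_{j,k} + (B^2)_{j,k} + \sum_{m \geq 3}(-1)^m (B^m)_{j,k}.$$
The $m = 0,1$ terms contribute $\delta_{j,k}$ and $-a_{j,k}$ (the $-b_{j,j}$ piece vanishes on the diagonal). The $m=2$ term $(B^2)_{j,k} = \sum_\ell b_{j,\ell}b_{\ell,k}$ collapses on the diagonal to $\sum_\ell b_{j,\ell}^2 \leq \epsilon\sum_\ell|b_{j,\ell}|$, which fits inside the claimed diagonal error; off the diagonal, the summands with $\ell = j$ and $\ell = k$ vanish (again because $b_{j,j} = b_{k,k} = 0$), producing exactly $\sum_{i \neq j,k}a_{j,i}a_{i,k}$, matching the middle term of the stated bound.

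The hardest step will be controlling the tail $\sum_{m \geq 3}|(B^m)_{j,k}|$ and showing it is absorbed by the remaining error term $\epsilon^2\sum_{l \neq m}|a_{l,m}|$ (respectively $\epsilon\sum_{l \neq m}|a_{l,m}|$ on the diagonal). For this I would use the spectral representation $(B^m)_{j,k} = \sum_i \mu_i^m u_{j,i}u_{k,i}$ for an orthonormal eigenbasis $\{u_i\}$ of $B$. The inequality $|\mu_i|^m \leq \|B\|_{\textup{op}}^{m-2}\mu_i^2$ combined with Cauchy--Schwarz yields $|(B^m)_{j,k}| \leq \|B\|_{\textup{op}}^{m-2}\sqrt{(B^2)_{j,j}(B^2)_{k,k}}$, whose geometric sum over $m \geq 3$ is $\ll \|B\|_{\textup{op}}\sqrt{(B^2)_{j,j}(B^2)_{k,k}}$. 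Plugging in the pointwise estimate $(B^2)_{\ell,\ell} \leq \epsilon\sum_m|b_{\ell,m}|$, the Gershgorin bound $\|B\|_{\textup{op}} \leq (n-1)\epsilon$, and the elementary inequality $\sqrt{S_j S_k} \leq (S_j + S_k)/2 \leq \sum_{l \neq m}|a_{l,m}|$ (writing $S_\ell := \sum_m|b_{\ell,m}|$) should yield the asserted bound; the delicate part is to arrange the available factors of $\epsilon$ so as to land exactly on the stated error term rather than a slightly weaker one.
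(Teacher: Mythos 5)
Your approach is genuinely different from the paper's. The paper expands $\det(A)$ (and the cofactors $\det(A_{k,j})$) directly over permutations, groups terms by the number $t$ of non-fixed points, and uses an averaging device (pick one non-fixed point, keep its off-diagonal factor, bound the other $t-1$ by $\epsilon$) together with a careful count $\binom{n-4}{t-3}(t-1)!$ to land exactly on $\epsilon^2\sum|a_{l,m}|$. You instead set $A = I+B$, invoke Gershgorin for invertibility and $\|B\|_{\mathrm{op}}\leq 1/2$, prove the determinant claim via $\operatorname{tr}\log(I+B)$, and expand $A^{-1}$ as a Neumann series. Your determinant argument is correct and clean, as are the $m=0,1,2$ Neumann terms and the diagonal tail estimate (there, $\|B\|_{\mathrm{op}}(B^2)_{j,j}\leq (n-1)\epsilon\cdot\epsilon S_j\leq\tfrac{1}{2}\epsilon\sum|a_{l,m}|$ works because $(n-1)\epsilon\leq 1/2$ absorbs the unwanted $n$).

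The gap is exactly at the step you flag as ``the delicate part'': the off-diagonal tail. Your chain of inequalities gives
$$\sum_{m\geq 3}|(B^m)_{j,k}|\;\ll\;\|B\|_{\mathrm{op}}\sqrt{(B^2)_{j,j}(B^2)_{k,k}}\;\leq\;(n-1)\epsilon\cdot\epsilon\sqrt{S_j S_k}\;\leq\;(n-1)\,\epsilon^2\sum_{l\neq m}|a_{l,m}|,$$
which is larger than the stated bound by a factor of $n-1$, and this cannot be rescued with the tools on the table. Gershgorin's bound $\|B\|_{\mathrm{op}}\leq(n-1)\epsilon$ can be essentially sharp, and the Cauchy--Schwarz step $\sum_i\mu_i^m u_{j,i}u_{k,i}\leq\|B\|_{\mathrm{op}}^{m-2}\sqrt{(B^2)_{j,j}(B^2)_{k,k}}$ is intrinsically lossy: for the rank-two example $b_{j,k}=b_{k,j}=\epsilon$ (all else $0$), the true tail is $\asymp\epsilon^3$ and the target bound is $2\epsilon^3$, yet your estimate yields $(n-1)\epsilon^3$. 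The extra $n$ is not cosmetic in this paper: in the deduction of Proposition \ref{Density} the term $\epsilon^2\sum|a_{l,m}|$ enters as $\tfrac{n(\log n)^2}{(\log q)^3}$ and is then multiplied by $\sum_{\ell\neq s}|x_\ell x_s|\leq n\|x\|^2$; an extra factor of $n$ would push this contribution to $\tfrac{n^3(\log n)^2}{(\log q)^3}\|x\|^2$, which with $n\asymp\log q/(\log\log q)^4$ is no longer $o(\|x\|^2/\log q)$ and would wreck the proof of Theorem \ref{FullRace}. To fix the tail you need a combinatorial (averaging) argument over paths or permutations rather than a global operator-norm bound --- i.e.\ you must exploit that in a closed walk or bijection contributing to the $m\geq 3$ terms you can always extract one distinguished off-diagonal factor $|a_{i,h}|$, bound the other $m-1$ by $\epsilon$, and then count paths/permutations with the constraint that two endpoints are pinned; this is precisely what the paper's grouping by $t$ accomplishes.
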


Lemma \ref{matrixlem} extends Lemmas 4.1 and 4.2 of Lamzouri~\cite{La1}, which would give roughly the same result if all the off-diagonal entries $a_{j,k}$ had size about $\epsilon$, but are weaker if the $a_{j,k}$ are on average smaller (as will later be the case for us when we take $\epsilon$ of size about $1/\log q$).

\begin{proof}[Proof of Lemma \ref{matrixlem}]
We have
$$ \det(A) = 1 + \sum_{\substack{\sigma \in S_{n}, \\ \sigma \neq 1}} \text{sgn}(\sigma) a_{1,\sigma(1)}...a_{n,\sigma(n)} , $$
where $S_{n}$ denotes the symmetric group on $n$ elements. We divide the sum according to the number $t$ of points that are {\em not} fixed by a permutation $\sigma$. Thus the only term with $t=0$ is the identity permutation, which we removed from the sum; there are no terms with $t=1$; and the contribution from $t=2$ has size at most
$$ \sum_{1 \leq j \leq n} \sum_{\substack{1 \leq k \leq n, \\ k \neq j}} |a_{j,k}|^{2} \leq \epsilon \sum_{\substack{1 \leq j,k \leq n, \\ j \neq k}} |a_{j,k}| . $$
For any $3 \leq t \leq n$, by averaging the total contribution is at most
$$ \frac{1}{t} \sum_{1 \leq j \leq n} \sum_{\substack{\sigma \in S_{n}, \sigma(j) \neq j, \\ \sigma \; \text{has} \; t \; \text{non-fixed points}}} |a_{j,\sigma(j)}| \epsilon^{t-1} = \frac{1}{t} \sum_{\substack{1 \leq j,k \leq n, \\ j \neq k}} |a_{j,k}| \epsilon^{t-1} \sum_{\substack{\sigma \in S_{n}, \sigma(j) = k, \\ \sigma \; \text{has} \; t \; \text{non-fixed points}}} 1 . $$
In the inner sum we have ${n-2 \choose t-2}$ choices of points that are not fixed (in addition to $j$ and $k$), and for any such choice there are at most $(t-1)!$ ways to construct $\sigma$ such that $\sigma(j)=k$. Thus the total contribution from any $3 \leq t \leq n$ is
$$ \leq \sum_{\substack{1 \leq j,k \leq n, \\ j \neq k}} |a_{j,k}| \epsilon^{t-1} \frac{1}{t} {n-2 \choose t-2} (t-1)! \leq \sum_{\substack{1 \leq j,k \leq n, \\ j \neq k}} |a_{j,k}| \epsilon (\epsilon n)^{t-2} , $$
and our claim about $\det(A)$ follows on using the assumption $\epsilon \leq 1/2n$ and summing over $t$. In particular, we see that $\det(A) \geq 1/2$ for all $A \in \mathcal{M}_{n}(\epsilon)$, so $A$ is invertible.

Next we need to prove the claims about $\tilde{a}_{j,k}$. If $j=k$ then we have
$$ \tilde{a}_{j,j} = \frac{\det(A_{j,j})}{\det(A)} , $$
where $A_{j,j}$ denotes the matrix $A$ with the $j$-th row and column removed. In particular we have $A_{j,j} \in \mathcal{M}_{n-1}(\epsilon)$, and so the claim about $\tilde{a}_{j,j}$ follows from our determinant results.

In the off-diagonal case we have
$$ |\tilde{a}_{j,k}| = \left|\frac{\det(A_{k,j})}{\det(A)} \right| = O(|\det(A_{k,j})|) = O\left(\sum_{\sigma \in \mathcal{B}_{k,j}} \prod_{i \neq k} |a_{i,\sigma(i)}| \right) , $$
where now $\mathcal{B}_{k,j}$ denotes the set of all bijections from $\{1,2,...,n\} \backslash \{k\}$ to $\{1,2,...,n\} \backslash \{j\}$. We again divide the sum according to the number $t$ of points that are {\em not} fixed by $\sigma$, noting that since $k \neq j$ the point $j$ is necessarily always a non-fixed point, and the point $k$ will always be the image of a non-fixed point. Thus there are no terms with $t=0$, and the contribution from $t=1$ is simply $|a_{j,k}|$. When $t=2$ the contribution is
$$ \sum_{i \neq j,k} |a_{j,i}||a_{i,k}| . $$
For any $3 \leq t \leq n-1$, each $\sigma$ must have at least $t-2$ non-fixed points that are different from $j$, and whose image is different from $k$, so by averaging the total contribution is at most
$$ \frac{1}{t-2} \sum_{\substack{1 \leq i \leq n, \\ i \neq k, j}} \sum_{\substack{\sigma \in \mathcal{B}_{k,j}, \sigma(i) \neq i,j,k, \\ \sigma \; \text{has} \; t \; \text{non-fixed points}}} |a_{i,\sigma(i)}| \epsilon^{t-1} = \frac{1}{t-2} \sum_{i \neq k,j} \sum_{h \neq i,j,k} |a_{i,h}| \epsilon^{t-1} \sum_{\substack{\sigma \in \mathcal{B}_{k,j}, \sigma(i) = h, \\ \sigma \; \text{has} \; t \; \text{non-fixed points}}} 1 . $$
In the inner sum we have ${n-4 \choose t-3}$ choices of points that are not fixed (in addition to $j$, $i$ and $h$), and for any such choice there are at most $(t-1)!$ ways to construct $\sigma$ such that $\sigma(i)=h$. Thus the total contribution from any $3 \leq t \leq n-1$ is
$$ \leq \sum_{\substack{1 \leq i,h \leq n, \\ i \neq h}} |a_{i,h}| \epsilon^{t-1} \frac{1}{t-2} {n-4 \choose t-3} (t-1)! \leq \sum_{\substack{1 \leq i,h \leq n, \\ i \neq h}} |a_{i,h}| \epsilon^{2} (\epsilon n)^{t-3} (t-1) . $$
Proposition 1 follows on using the assumption $\epsilon \leq 1/2n$ and summing over $t$.
\end{proof}

\section{Probabilistic tools and results}\label{probtoolssec}

\subsection{Passing to the Gaussian case}
As described in section \ref{pnrsetup}, the prime number race between residue classes $a$ modulo $q$ is associated with random variables of the general shape
$$ W_{a} := \sum_{i=1}^{m} \Re(c_{i}(a) V_{i}) , $$
where $(V_{i})_{1 \leq i \leq m}$ is a sequence of independent, mean zero, complex valued random variables (depending on $q$), and where $c_{i}(a) \in \C$ are deterministic coefficients.

In order to access the tools associated with Gaussian random processes, we would like a multivariate normal approximation (i.e. multivariate central limit theorem) for the $n$-dimensional random vector $W=(W_{a_j})_{1 \leq j \leq n}$. This will allow us to replace the $W_{a_j}$ by Gaussian random variables with the same means and covariances. We need an explicit bound for the error arising in the normal approximation, which in particular makes clear the dependence on $n$. There are not too many such results in the literature, and we will deduce a suitable result from the work of Reinert and R\"{o}llin~\cite{rr}.
\begin{lem}[Following Theorem 2.1 of Reinert and R\"{o}llin~\cite{rr}, 2009]\label{reinrollnormal}
Let the situation be as described above, let $\mathcal{A}$ be a finite set, and let $Z=(Z_{a})_{a \in \mathcal{A}}$ denote a multivariate normal random vector with the same mean vector and covariance matrix as $W=(W_{a})_{a \in \mathcal{A}}$. Assume that $\E|V_i|^{4} \leq K^{4}/m^{2}$ for all $i$, for some $K \geq 1$.

Then for any three times differentiable function $h : \R^{\#\mathcal{A}} \rightarrow \R$ we have
$$ |\E h(W) - \E h(Z)| \ll \frac{|h|_2 K^{2}}{m} \sum_{a,b \in \mathcal{A}} \sqrt{\sum_{i=1}^{m} |c_{i}(a)|^{2}|c_{i}(b)|^{2}} + \frac{|h|_3 K^{3}}{m^{3/2}} \sum_{i=1}^{m} \left(\sum_{a \in \mathcal{A}} |c_{i}(a)| \right)^{3}, $$
where $|h|_2 := \sup_{a,b \in \mathcal{A}} ||\frac{\partial^2}{\partial x_{a} \partial x_{b}} h||_{\infty}$ and $|h|_3 := \sup_{a,b,c \in \mathcal{A}} ||\frac{\partial^3}{\partial x_{a} \partial x_{b} \partial x_{c}} h||_{\infty}$.
\end{lem}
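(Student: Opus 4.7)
\textbf{Proof plan for Lemma \ref{reinrollnormal}.} The plan is to invoke the exchangeable pairs version of Stein's method as formulated by Reinert and R\"{o}llin, and then compute the quantities that appear in their bound using the independence and fourth-moment hypothesis on the $V_i$.

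First I would construct the exchangeable pair. Let $I$ be a uniform index on $\{1,\dots,m\}$, independent of everything, and let $V'_I$ be an independent copy of $V_I$. Define
$$ W'_a := W_a - \Re(c_I(a) V_I) + \Re(c_I(a) V'_I) , \qquad a \in \mathcal{A} , $$
so that $(W,W')$ is an exchangeable pair. A direct computation using the independence of $I$ and $V'_I$ shows that
$$ \E[W'_a - W_a \mid V_1,\dots,V_m] = -\frac{1}{m} \sum_{i=1}^{m} \Re(c_i(a) V_i) = -\frac{1}{m} W_a , $$
so the linearity (Stein) condition holds with $\Lambda = (1/m) \mathrm{Id}$. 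I would then invoke Theorem~2.1 of Reinert and R\"{o}llin~\cite{rr} (in the diagonal $\Lambda$ case), which yields a bound of the shape
$$ |\E h(W) - \E h(Z)| \ll |h|_2 \sum_{a,b \in \mathcal{A}} \sqrt{\var\!\Bigl( \tfrac{m}{2} \E\bigl[(W'_a - W_a)(W'_b - W_b) \,\big|\, W\bigr] \Bigr)} \;+\; \frac{|h|_3 m}{4} \sum_{a,b,c \in \mathcal{A}} \E|W'_a - W_a||W'_b - W_b||W'_c - W_c| . $$

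The next step is to evaluate the conditional second moment. Using the independence of $I$, $V'_I$, and the $V_i$, and the fact that $\E V'_i = 0$, a short calculation gives
$$ \E\bigl[(W'_a - W_a)(W'_b - W_b) \,\big|\, V_1,\dots,V_m\bigr] = \frac{1}{m}\bigl(\Sigma_{a,b} + \textstyle\sum_{i=1}^{m} \Re(c_i(a) V_i)\Re(c_i(b) V_i)\bigr) , $$
where $\Sigma_{a,b} := \cov(W_a, W_b)$. Hence $\tfrac{m}{2}\E[(W'_a - W_a)(W'_b - W_b) \mid W] - \tfrac12 \Sigma_{a,b}$ has variance at most that of $\tfrac12 \sum_i \Re(c_i(a) V_i) \Re(c_i(b) V_i)$, which by independence of the $V_i$ and the bound $\E|V_i|^4 \leq K^4/m^2$ is at most $\tfrac14 \sum_i |c_i(a)|^2 |c_i(b)|^2 K^4/m^2$. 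Taking square roots and summing over $a,b$ produces exactly the first term in the claimed bound.

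For the third-moment contribution I would use the pointwise estimate $|W'_a - W_a| \leq |c_I(a)|(|V_I| + |V'_I|)$. Conditioning on $I$ and then averaging, together with the inequality $\E(|V_i| + |V'_i|)^3 \ll \E|V_i|^3 \leq (\E|V_i|^4)^{3/4} \leq K^3/m^{3/2}$ (by H\"older), gives
$$ m \cdot \E|W'_a - W_a||W'_b - W_b||W'_c - W_c| \ll \frac{K^3}{m^{3/2}} \sum_{i=1}^{m} |c_i(a)||c_i(b)||c_i(c)| . $$
Summing this over $a,b,c \in \mathcal{A}$ factorises into $\frac{K^3}{m^{3/2}} \sum_i (\sum_a |c_i(a)|)^3$, matching the second term in the statement.

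The main obstacle is bookkeeping rather than a deep idea: one has to keep track of the real parts and moduli for complex $V_i$ (but the trivial bound $|\Re z| \leq |z|$ suffices throughout), and one must cite the Reinert--R\"{o}llin theorem in the form appropriate to smooth test functions and a scalar $\Lambda$. Everything else reduces to mean-zero, independence, and the $L^4$ bound on the $V_i$, exactly as packaged into the hypothesis $\E|V_i|^4 \leq K^4/m^2$.
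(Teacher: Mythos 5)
Your proof is correct and takes essentially the same route as the paper: an exchangeable pair $(W,W')$ obtained by resampling a uniformly chosen summand, verification of the linearity condition with $\Lambda=(1/m)\mathrm{Id}$, and Theorem~2.1 of Reinert--R\"{o}llin, with the variance and third-moment terms then estimated from independence and the $L^4$ hypothesis. (The paper's write-up is terser—it just cites the exchangeable-pair computation from \cite{harpertypicalmax}, and it bounds $\E|V_i|^3$ by a truncation argument where you use Lyapunov/H\"older—but these are cosmetic differences.)
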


Reinert and R\"{o}llin's work develops a multivariate version of Stein's method of exchangeable pairs, and applies in far more general situations than above. In Appendix \ref{normapproxappend} we very briefly indicate how to deduce Lemma \ref{reinrollnormal} from Theorem 2.1 of Reinert and R\"{o}llin~\cite{rr} (this deduction being, by now, a fairly standard calculation).

For the special case of prime number races, we centre and normalize the random variables $X(q,a_j)$ from Section \ref{pnrsetup} by setting $$ Y_{j} := \frac{X(q,a_j) + C_{q}(a_{j})}{\sqrt{\var(q)}} = \frac{1}{\sqrt{\var(q)}} \sum_{\substack{\chi \; \text{mod} \; q, \\ \chi \neq \chi_{0}}} \Re\left(2 \chi(a_{j}) \sum_{\gamma_{\chi} > 0} \frac{U(\gamma_{\chi})}{\sqrt{\frac{1}{4} + \gamma_{\chi}^{2}}} \right) , \;\;\;\;\; 1 \leq j \leq n, $$
where the $U(\gamma_{\chi})$ are independent random variables distributed uniformly on the unit circle. Then $Y_1,\dots, Y_n$ have mean zero and variance $1$. Moreover, we have
 $$\ex Y_iY_j=\frac{B_q(a_i,a_j)}{\var(q)}\ll \frac{1}{\log q}, \;\;\;\;\; i \neq j , $$
 by  \eqref{AsympVariance} and \eqref{BoundCov}. 
In this case, we obtain the following corollary.
\begin{lem}\label{pnrfirstapprox}
Let $Z=(Z_{j})_{1 \leq j \leq n}$ denote a multivariate normal random vector whose components have mean zero, variance one, and correlations
$$ \E Z_{i}Z_{j} := \E Y_{i}Y_{j} = \frac{B_{q}(a_i,a_j)}{\var(q)} . $$
Then for any three times differentiable function $h : \R^{n} \rightarrow \R$ we have
$$ |\E h(Y) - \E h(Z)| \ll \frac{n^{2}|h|_2 + n^{3}|h|_3}{\sqrt{\varphi(q)}} . $$
\end{lem}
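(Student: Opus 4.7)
The plan is to apply Lemma \ref{reinrollnormal} directly to $Y = (Y_j)_{1 \le j \le n}$. I index the independent contributors by pairs $i = (\chi, \gamma_\chi)$ with $\chi \ne \chi_0 \pmod q$ and $\gamma_\chi > 0$, taking $V_i := U(\gamma_\chi)$ and $c_i(a) := 2\chi(a)/(\sqrt{\var(q)}\sqrt{\tfrac14+\gamma_\chi^2})$, so that $Y_j = \sum_i \re(c_i(a_j) V_i)$ as required. Since $|V_i| = 1$ almost surely, $\E|V_i|^4 = 1$, and so the hypothesis $\E|V_i|^4 \le K^4/m^2$ forces the choice $K = \sqrt{m}$. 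With this choice both prefactors $K^2/m$ and $K^3/m^{3/2}$ in Lemma \ref{reinrollnormal} equal $1$, so they disappear from the bound and only the structural sums $\sum_{j,k} \sqrt{\sum_i |c_i(a_j)|^2 |c_i(a_k)|^2}$ and $\sum_i (\sum_j |c_i(a_j)|)^3$ remain to be estimated.

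A small technical point is that Lemma \ref{reinrollnormal} is stated for a finite index set whereas each $Y_j$ involves infinitely many zeros. This is handled by truncating to zeros with $\gamma_\chi \le T$, applying Lemma \ref{reinrollnormal} to the truncated vector, and passing to the limit $T \to \infty$. The tail converges to zero in $L^2$ (and so almost surely along a subsequence) by \eqref{BoundSumZeros}, and a standard approximation argument using the assumed boundedness of the second and third derivatives of $h$ transfers the bound to the full random vector.

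To estimate the two sums, I use $|\chi(a)| \le 1$ together with the elementary inequality $(\tfrac14 + \gamma^2)^{-s} \ll (\tfrac14 + \gamma^2)^{-1}$ for $s \in \{3/2, 2\}$, valid because $\tfrac14 + \gamma^2 \ge \tfrac14$. Combined with \eqref{BoundSumZeros}, this gives
$$ \sum_i |c_i(a_j)|^2 |c_i(a_k)|^2 \ll \frac{1}{\var(q)^2} \sum_{\chi \ne \chi_0} \sum_{\gamma_\chi > 0} \frac{1}{\tfrac14 + \gamma_\chi^2} \ll \frac{\varphi(q) \log q}{\var(q)^2}, $$
and similarly $\sum_i (\sum_j |c_i(a_j)|)^3 \ll n^3 \varphi(q) \log q / \var(q)^{3/2}$. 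Substituting $\var(q) \asymp \varphi(q) \log q$ from \eqref{AsympVariance}, the first estimate yields $\sum_{j,k} \sqrt{\sum_i |c_i(a_j)|^2 |c_i(a_k)|^2} \ll n^2/\sqrt{\varphi(q) \log q}$, and the second yields $\sum_i (\sum_j |c_i(a_j)|)^3 \ll n^3/\sqrt{\varphi(q) \log q}$; both are majorised by $(n^2 + n^3)/\sqrt{\varphi(q)}$, producing the claimed bound. The main obstacle here is not conceptual but bookkeeping: the only slightly delicate step is the truncation, while the zero-sum estimates reduce immediately to \eqref{BoundSumZeros}.
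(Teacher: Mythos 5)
Your proof is correct, but it takes a genuinely different route from the paper, and the difference is worth noting. The paper groups the independent contributions by character: it takes $V_\chi := \frac{2}{\sqrt{\var(q)}}\sum_{\gamma_\chi > 0} \frac{U(\gamma_\chi)}{\sqrt{\frac14+\gamma_\chi^2}}$ and $c_\chi(a_j) := \chi(a_j)$, giving a finite index set of size $m = \varphi(q)-1$. It then does a short fourth-moment computation (using that $\E\,U(\gamma_1)U(\gamma_2)\overline{U(\gamma_3)U(\gamma_4)}$ vanishes unless the zeros pair up) to verify $\E|V_\chi|^4 \ll 1/\varphi(q)^2$, so that $K$ is an absolute constant and the prefactors $K^2/m$, $K^3/m^{3/2}$ provide the $1/\sqrt{\varphi(q)}$ decay directly. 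You instead index by individual zeros, so that each $V_i = U(\gamma_\chi)$ satisfies $|V_i|=1$ almost surely, making the moment hypothesis trivial at the cost of forcing $K = \sqrt{m}$ and thereby cancelling the prefactors exactly; the $1/\sqrt{\varphi(q)\log q}$ decay then has to come from the structural sums via \eqref{BoundSumZeros}, which you extract correctly. What your approach buys is that you skip the fourth-moment orthogonality computation entirely; what it costs is that your index set is infinite, so you need a truncation and a limiting argument, whereas the paper's grouping keeps $m$ finite from the start (the infinite sum over zeros is absorbed into the definition of each $V_\chi$, which converges in $L^2$ and a.s.). Your treatment of the limit $T\to\infty$ is a little casual: boundedness of $|h|_2$ and $|h|_3$ alone permits $h$ to grow quadratically, so passing $\E h(Y^{(T)}) \to \E h(Y)$ needs a word about uniform integrability (e.g.\ via a uniform fourth moment bound for $Y^{(T)}$, which is available). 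That is a routine repair and does not affect correctness, but it should be stated. Both routes arrive at the same final bound.
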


\begin{proof}[Proof of Lemma \ref{pnrfirstapprox}]
We apply Lemma \ref{reinrollnormal} with the sum over $1 \leq i \leq m$ replaced by a sum over characters $\chi \neq \chi_{0}$ mod $q$ (so $m = \varphi(q) - 1$), and with $c_{i}(a_j)$ replaced by $\chi(a_{j})$ and with $V_{i}$ replaced by
$$ V_{\chi} := \frac{2}{\sqrt{\var(q)}} \sum_{\gamma_{\chi} > 0} \frac{U(\gamma_{\chi})}{\sqrt{\frac{1}{4} + \gamma_{\chi}^{2}}} . $$

The $V_{\chi}$ are indeed independent, mean zero random variables. Moreover, observe that $\E U(\gamma_{\chi_1}) U(\gamma_{\chi_2}) \overline{U(\gamma_{\chi_3}) U(\gamma_{\chi_4})}$ vanishes unless $\{\chi_1 , \chi_2 \} = \{\chi_3 , \chi_4 \}$, and in that case it is $\ll 1$. Therefore we have
$$ \E |V_{\chi}|^{4} \ll \frac{1}{\var(q)^{2}} \left(\sum_{\gamma_{\chi} > 0} \frac{1}{\frac{1}{4} + \gamma_{\chi}^{2}} \right)^{2} \ll \frac{\log^{2}q}{\var(q)^{2}} \ll \frac{1}{\varphi(q)^{2}} , $$
where the final inequalities follow from Lemma \ref{VarianceZeroes}. So we have checked that Lemma \ref{reinrollnormal} is applicable with $K$ an absolute constant.

Using the facts that $|c_{i}(a_j)| = |\chi(a_{j})| \leq 1$ and $\#\mathcal{A} = n$ and $m = \varphi(q) - 1$, Lemma \ref{pnrfirstapprox} now follows immediately from Lemma \ref{reinrollnormal}.
\end{proof}

The above still isn't quite what we need, since we are interested in the probabilities of certain orderings of the $Y_{j}$ (or really of the $X(q,a_j)$), and these correspond to the expectations of indicator functions that are not three times differentiable. So we need to approximate indicator functions by smooth functions $h$, with some control on the resulting error in the probabilities. There is a substantial literature that attempts to do this as efficiently as possible, but for us a simple approach will suffice.
\begin{lem}\label{pnrunsmoothing}
Let $Y=(Y_{j})_{1 \leq j \leq n}$ and $Z=(Z_{j})_{1 \leq j \leq n}$ be as in Lemma \ref{pnrfirstapprox}. Let $(S)$ be any subset of $\{1,...,n\} \times \{1,...,n\}$ not including any diagonal pairs $(i,i)$, and let $S$ be the subset of $\R^{n}$ defined by
$$ S := \{(x_1,\dots,x_n) \in \R^{n} : x_{i} \geq x_{j} \; \forall (i,j) \in (S)\} . $$
Finally, let $\delta > 0$ be a small parameter.

Then
$$ |\p(Y \in S) - \p(Z \in S)| \ll \frac{n^{6}}{\delta^{3} \sqrt{\varphi(q)}} + n^{2} \delta . $$
\end{lem}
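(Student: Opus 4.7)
The approach is the standard smoothing: sandwich $\textbf{1}_S$ between smooth approximations $h_\delta^- \leq \textbf{1}_S \leq h_\delta^+$ that agree with $\textbf{1}_S$ away from a thin boundary strip, apply Lemma \ref{pnrfirstapprox} to each approximation, and control the loss using the Gaussianity of $Z$ on that strip.

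Fix a smooth nondecreasing function $\phi\colon \R\to[0,1]$ with $\phi(t)=0$ for $t\leq -1$ and $\phi(t)=1$ for $t\geq 0$, and set $\phi_\delta(t):=\phi(t/\delta)$, so that $\|\phi_\delta^{(k)}\|_\infty \ll \delta^{-k}$ for $k=1,2,3$. Define
$$ h_\delta^+(x) := \prod_{(i,j)\in (S)} \phi_\delta(x_i-x_j), \qquad h_\delta^-(x) := \prod_{(i,j)\in (S)} \phi_\delta(x_i - x_j - \delta) . $$
A direct check shows $h_\delta^- \leq \textbf{1}_S \leq h_\delta^+$, and both functions agree with $\textbf{1}_S$ outside the boundary strip $B_\delta := \{x\in\R^n : |x_i-x_j|\leq \delta \text{ for some } (i,j)\in (S)\}$.

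To bound $|h_\delta^{\pm}|_2$ and $|h_\delta^{\pm}|_3$, note that each factor of the product depends on only two coordinates, and any fixed coordinate appears in at most $2(n-1)$ pairs of $(S)$. Expanding $\partial_a\partial_b h_\delta^{\pm}$ via the product rule, the nonzero contributions come either from a single factor differentiated twice or from two distinct factors each differentiated once (requiring $a$ and $b$ to appear in them), yielding $O(n^2)$ terms of magnitude $O(\delta^{-2})$; hence $|h_\delta^{\pm}|_2 \ll n^2/\delta^2$. The analogous count gives $|h_\delta^{\pm}|_3 \ll n^3/\delta^3$. Feeding these into Lemma \ref{pnrfirstapprox} produces
$$ |\E h_\delta^{\pm}(Y) - \E h_\delta^{\pm}(Z)| \ll \frac{n^4/\delta^2 + n^6/\delta^3}{\sqrt{\varphi(q)}} \ll \frac{n^6}{\delta^3\sqrt{\varphi(q)}}, $$
where in the last inequality we may assume $\delta\leq n^2$ (otherwise the target bound of the lemma is trivial).

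For the boundary strip, $Z_i-Z_j$ is a centered Gaussian with variance $2 - 2B_q(a_i,a_j)/\var(q) = 2 + O(1/\log q) \geq 1$ for $q$ large, using \eqref{BoundCov} and \eqref{AsympVariance}. Hence $\p(|Z_i-Z_j|\leq\delta)\ll\delta$, and a union bound over $(i,j)\in (S)$ gives $\p(Z\in B_\delta)\ll n^2\delta$. Chaining
$$ \p(Y\in S) \leq \E h_\delta^+(Y) \leq \E h_\delta^+(Z) + O\!\left(\frac{n^6}{\delta^3\sqrt{\varphi(q)}}\right) \leq \p(Z\in S) + \p(Z\in B_\delta) + O\!\left(\frac{n^6}{\delta^3\sqrt{\varphi(q)}}\right) $$
with the symmetric lower bound from $h_\delta^-$ yields the claimed estimate. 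The only step requiring genuine care is the derivative bookkeeping, which crucially exploits the sparsity that each coordinate occurs in at most $O(n)$ factors of the product; the rest of the argument is routine.
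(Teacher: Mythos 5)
Your proof is correct and follows essentially the same route as the paper: sandwich $\textbf{1}_S$ between smooth products of shifted bump functions, bound the second and third derivatives by counting the $O(n^2)$ (resp. $O(n^3)$) nonzero terms in the product-rule expansion, apply Lemma \ref{pnrfirstapprox}, and bound the boundary-strip contribution via the Gaussian anticoncentration of $Z_i - Z_j$. The only difference is cosmetic — you scale a fixed $\phi$ by $\delta$ and place the transition on $[-\delta,0]$, while the paper builds $\phi$ directly with transition on $[0,\delta]$ — but the two constructions are identical up to translation, and the rest of the argument matches.
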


\begin{proof}[Proof of Lemma \ref{pnrunsmoothing}]
Let $\phi :\mathbb{R}\to \mathbb{R}$ be a three times differentiable function such that 
$$ \phi(x)= \begin{cases} 1 &\text{ if } x\geq \delta \\ \in [0,1] &\text{ if } 0<x<\delta \\0 &\text{ if } x\leq 0,\end{cases}$$
and  $\phi^{(d)}(x)\ll (1/\delta)^d$ for $1\leq d\leq 3$ (where $\phi^{(d)}$ denotes the $d$-th derivative of $\phi$). Note that such $\phi$ exists since the interval on which $\phi$ changes from $0$ to $1$ has length $\delta$.

Now, let $h^{+}_{\delta}, h^{-}_{\delta}:\mathbb{R}^n\to \mathbb{R}$ be three times differentiable functions defined by
$$ h^{-}_{\delta}(x_1,\dots,x_n) := \prod_{(i,j) \in (S)} \phi(x_i-x_j) , \;\;\;\;\; h^{+}_{\delta}(x_1,\dots,x_n) := \prod_{(i,j) \in (S)} \phi(x_i-x_j + \delta). $$
By repeated application of the product rule we see
\begin{eqnarray}
\frac{\partial^2}{\partial x_a \partial x_b}h^{-}_{\delta}(x_1,\dots,x_n) & = & \sum_{\substack{(i,j), (k,l) \in (S), \\ (i,j) \neq (k,l)}} \Biggl(\prod_{\substack{(u,v) \in (S), \\ (u,v) \neq (i,j), (k,l)}} \phi(x_u-x_v) \Biggr) \frac{\partial}{\partial x_a} \phi(x_{i}-x_{j}) \frac{\partial}{\partial x_b} \phi(x_{k}-x_{l}) \nonumber \\
&& + \sum_{(i,j) \in (S)} \Biggl(\prod_{\substack{(u,v) \in (S), \\ (u,v) \neq (i,j)}} \phi(x_u-x_v) \Biggr) \frac{\partial^2}{\partial x_a \partial x_b} \phi(x_{i}-x_{j}) . \nonumber
\end{eqnarray}
Here each of the products has absolute value at most 1. The derivatives vanish unless $a \in \{i,j\}$ and $b \in \{k,l\}$, so there are $\ll n^{2}$ non-vanishing terms in the sums, and each term contributes $\ll 1/\delta^{2}$ because $\phi^{(d)}(x)\ll (1/\delta)^d$. We conclude from this calculation that $|h^{-}_{\delta}|_2 \ll n^{2}/\delta^{2}$. An exactly similar argument shows that $|h^{-}_{\delta}|_3 \ll n^{3}/\delta^{3}$, and the same for $h^{+}_{\delta}$.

Now observe that $\E h^{-}_{\delta}(Y) \leq \p(Y \in S) \leq \E h^{+}_{\delta}(Y)$. In addition, by Lemma \ref{pnrfirstapprox} we have
$$ |\E h^{-}_{\delta}(Y) - \E h^{-}_{\delta}(Z)| \ll \frac{n^{6}}{\delta^{3} \sqrt{\varphi(q)}} \;\;\; \text{and} \;\;\; |\E h^{+}_{\delta}(Y) - \E h^{+}_{\delta}(Z)| \ll \frac{n^{6}}{\delta^{3} \sqrt{\varphi(q)}} , $$
so to prove Lemma \ref{pnrunsmoothing} it will suffice to show that $|\p(Z \in S) - \E h^{-}_{\delta}(Z)| \ll n^{2} \delta$ and $|\p(Z \in S) - \E h^{+}_{\delta}(Z)| \ll n^{2} \delta$. But this is fairly easy, because we have for example that
$$ |\p(Z \in S) - \E h^{-}_{\delta}(Z)| \leq \p(|Z_i - Z_j| \leq \delta \; \text{for some} \; (i,j) \in (S)) \leq \sum_{(i,j) \in (S)} \p(|Z_i - Z_j| \leq \delta) . $$
And each difference $Z_i - Z_j$ is a normal random variable with mean zero and variance
$$ \E(Z_i - Z_j)^{2} = \E Z_i^{2} + \E Z_j^{2} - 2\E Z_i Z_j = 2 - 2 \frac{B_{q}(a_i,a_j)}{\var(q)} = 2 + O(1/\log q) , $$
in view of  \eqref{AsympVariance} and \eqref{BoundCov} (and our assumption that if $(i,j) \in (S)$ then $i \neq j$, and therefore $a_i \neq a_j$). Such a normal random variable has probability $O(\delta)$ of lying in any $\delta$-ball, and there are at most $n^{2}$ pairs in $(S)$, so we have the required bound $\ll n^{2} \delta$ for the right hand side.
\end{proof}

Finally, by making the optimal choice of $\delta$ in Lemma \ref{pnrunsmoothing}, and correcting for the small shifts $C_{q}(a_j)/\sqrt{\var(q)}$ in the definition of $Y_j$, we obtain the following result that we shall actually use.
\begin{normapprox}
Let $X := \left(\frac{X(q,a_j)}{\sqrt{\var(q)}}\right)_{1 \leq j \leq n}$, and let $Z=(Z_{j})_{1 \leq j \leq n}$ denote a multivariate normal random vector whose components have mean zero, variance one, and correlations $\E Z_{i}Z_{j} := \frac{B_{q}(a_i,a_j)}{\var(q)}$.

Then for any set $S$ as in Lemma \ref{pnrunsmoothing}, we have
$$ |\p(X \in S) - \p(Z \in S)| \ll \frac{n^{3}}{\varphi(q)^{1/8}} . $$
\end{normapprox}

\begin{proof}[Proof of Normal Approximation Result 1]
Choosing $\delta = n/\varphi(q)^{1/8}$ in Lemma \ref{pnrunsmoothing} is optimal and leads to the bound
$$ |\p(Y \in S) - \p(Z \in S)| \ll \frac{n^{3}}{\varphi(q)^{1/8}} . $$

Since we have $X = Y - (\frac{C_{q}(a_j)}{\sqrt{\var(q)}})_{1 \leq j \leq n}$, and as described in section \ref{pnrsetup} we always have $C_{q}(a_j)/\sqrt{\var(q)} \ll_{\epsilon} q^{\epsilon}/\sqrt{\var(q)} \ll 1/\varphi(q)^{1/2-\epsilon}$ (which is much smaller than $\delta = n/\varphi(q)^{1/8}$), then as well as the inequality $\E h^{-}_{\delta}(Y) \leq \p(Y \in S) \leq \E h^{+}_{\delta}(Y)$ in the proof of Lemma \ref{pnrunsmoothing} we will actually have $\E h^{-}_{\delta}(Y) \leq \p(X \in S) \leq \E h^{+}_{\delta}(Y)$, provided the function $\phi$ there is chosen suitably (e.g. such that $\phi(x) = 0$ if $x \leq \delta/100$, and $\phi(x) = 1$ if $x \geq 99\delta/100$). So we will have the same bound for $|\p(X \in S) - \p(Z \in S)|$.
\end{proof}

\subsection{Normal comparison results}
In this subsection we record some celebrated results that let one compare probabilities for multivariate Gaussians with different covariance matrices.

\begin{normcomp1}[Slepian's Lemma, see e.g. Piterbarg~\cite{pit}]
Let $X=(X_{i})_{1 \leq i \leq n}$ and $W=(W_{i})_{1 \leq i \leq n}$ be vectors of jointly normal, mean zero, variance one random variables. Suppose that $\E X_i X_j \leq \E W_i W_j$ for all pairs $i,j$. Then for any real numbers $u_1 , ..., u_n$ we have
$$ \p(X_i \leq u_i \; \forall \; 1 \leq i \leq n) \leq \p(W_i \leq u_i \; \forall \; 1 \leq i \leq n) . $$
\end{normcomp1}

Slepian's Lemma says that decreasing the correlations of normal random variables makes them stochastically larger. It allows one to replace complicated correlations with simpler ones and it has the great advantage that it is never worse than trivial.

Using Slepian's Lemma we can prove the following useful bound.
\begin{normcomp2}
Suppose that $n \geq 2$ and that $\epsilon > 0$ is sufficiently small. Let $X_{1},...,X_{n}$ be mean zero, variance one, jointly normal random variables, and suppose that $\E X_{i}X_{j} \leq \epsilon$ whenever $i \neq j$. Then for any $A \geq 1$ and any $B> 0$ we have
$$ \p(\max_{1 \leq i \leq n} X_{i} \leq A) \ll \exp\left\{- \Theta\left(n \frac{e^{-A^2/2+O(\epsilon A^2 +AB+B^2)}}{A+B} \right) \right\} + e^{-B^{2}/\epsilon} . $$

In particular, for any $100\epsilon \leq \delta \leq 1/100$ (say) we have
$$ \p(\max_{1 \leq i \leq n} X_{i} \leq \sqrt{(2-\delta)\log n}) \ll e^{-\Theta(n^{\delta/20}/\sqrt{\log n})} + n^{-\delta^{2}/50\epsilon} . $$
\end{normcomp2}

\begin{proof}[Proof of Normal Comparison Result 2]
In view of Slepian's Lemma, the probability is at most as large as with the $X_i$ replaced by $W_i$, where $\E W_{i}W_{j} = \epsilon$ whenever $i \neq j$. However, it is well known that one can explicitly construct random variables with this covariance structure, by letting $Z_0 , Z_1 , ..., Z_n$ be independent standard normal random variables, and then taking
$$ W_{i} = \sqrt{\epsilon} Z_0 + \sqrt{1-\epsilon} Z_i \;\;\;\;\; \forall \; 1 \leq i \leq n . $$
So conditioning on the value of $Z_0$, we deduce that
$$ \p(\max_{1 \leq i \leq n} X_{i} \leq A) \leq \p(\max_{1 \leq i \leq n} W_{i} \leq A) = \frac{1}{\sqrt{2\pi}} \int_{-\infty}^{\infty} \left( \Phi\left(\frac{A + \sqrt{\epsilon}y}{\sqrt{1-\epsilon}}\right) \right)^{n} e^{-y^{2}/2} dy , $$
where $\Phi$ denotes the standard normal cumulative distribution function.

Splitting the integral at $y = B \sqrt{2/\epsilon}$, we find it is
\begin{equation}\label{BoundAB1}
 \leq \left( \Phi\left(\frac{A + B\sqrt{2}}{\sqrt{1-\epsilon}}\right) \right)^{n} + \int_{B \sqrt{2/\epsilon}}^{\infty} e^{-y^{2}/2} dy \ll \left( \Phi\left(\frac{A + B\sqrt{2}}{\sqrt{1-\epsilon}}\right) \right)^{n} + e^{-B^{2}/\epsilon} . 
 \end{equation}
Moreover, for any $z \geq 1$ we have
$$ \Phi(z) = 1 - \frac{1}{\sqrt{2\pi}} \int_{z}^{\infty} e^{-y^{2}/2} dy \leq 1 - \Theta(\frac{e^{-z^{2}/2}}{z}),$$
and so (since $\epsilon$ is small) we deduce
\begin{equation}\label{BoundAB2}
\left( \Phi\left(\frac{A + B\sqrt{2}}{\sqrt{1-\epsilon}}\right) \right)^{n} \leq \left(1-\Theta\left(\frac{e^{-(1+2\epsilon)(A+B\sqrt{2})^{2}/2}}{A+B}\right)\right)^{n}, 
\end{equation} from which 
the first bound claimed in Normal Comparison Result 2 follows.

The second bound follows on taking $A= \sqrt{(2-\delta)\log n}$ and $B = \delta\sqrt{(\log n)/50}$ in \eqref{BoundAB1} and \eqref{BoundAB2}, and noting then that
$$ n \frac{e^{-(1+2\epsilon)(A+B\sqrt{2})^{2}/2}}{A+B} \gg n \frac{e^{-(1+2\epsilon)A^{2}/2 - 2AB}}{A+B} \gg n \frac{e^{-(1+\delta/50)(1-\delta/2)\log n - (2/5)\delta\log n}}{\sqrt{\log n}}  \geq \frac{n^{\delta/20}}{\sqrt{\log n}} . $$
\end{proof}

The above results only give one sided bounds on probabilities, whereas in our theorems we want to show probabilities are equal up to a small error. The following result can supply such estimates in some cases.
\begin{normcomp3}[See Theorem 2.1 of Li and Shao~\cite{lishao}]
Let $X=(X_{i})_{1 \leq i \leq n}$ and $W=(W_{i})_{1 \leq i \leq n}$ be vectors of jointly normal, mean zero, variance one random variables. For each pair $i,j$ define $\rho_{i,j} := \max\{|\E X_i X_j |, |\E W_i W_j |\}$. Then for any real numbers $u_1 , ..., u_n$ we have
\begin{eqnarray}
&& \p(X_i \leq u_i \; \forall \; 1 \leq i \leq n) - \p(W_i \leq u_i \; \forall \; 1 \leq i \leq n) \nonumber \\
& \leq & \frac{1}{2\pi} \sum_{\substack{1 \leq i < j \leq n, \\ \E X_i X_j > \E W_i W_j}} (\textup{arcsin}(\E X_i X_j) - \textup{arcsin}(\E W_i W_j)) \exp\left\{- \frac{(u_i^{2} + u_j^{2})}{2(1+\rho_{i,j})} \right\} . \nonumber
\end{eqnarray}
\end{normcomp3}

Note that by swapping the roles of $X$ and $W$ one can obtain two sided bounds from this result.

A problem with Normal Comparison Result 3 is that the probabilities on the left may themselves be very small, and so the bound on the right may be worse than trivial. We know of no result that can overcome this difficulty in general, and a major issue in proving our theorems will be arranging things so that we only apply Normal Comparison Result 3 to probabilities that are fairly large, as discussed in the introduction.

\subsection{A result on leaders}\label{subsecprobleader}
In this subsection we shall establish Theorem \ref{probleader}. To this end, 
we shall first prove the following.
\begin{lem}\label{probleaderlemma}
Let the situation be as in Theorem \ref{probleader}. Then for any $x \geq 1$ we have
$$ |\p(\max_{2 \leq i \leq n} X_{i} \leq x | X_{1}=x) - \Phi(x)^{n-1}| \ll x e^{-(x^{2}/2)(1+O(\epsilon))} \sum_{2 \leq i \leq n} |r_{1,i}| + e^{-x^{2}(1+O(\epsilon))} \sum_{2 \leq i < j \leq n} |r_{i,j}| . $$
\end{lem}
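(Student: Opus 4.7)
The plan is to condition on $X_1 = x$, standardize the resulting conditional distribution, and then apply Normal Comparison Result 3 with a reference distribution of independent standard normals. Since $(X_1,\ldots,X_n)$ is jointly normal, $(X_2,\ldots,X_n)\mid X_1 = x$ is jointly normal with means $r_{1,i}x$, variances $1-r_{1,i}^2$, and pairwise covariances $r_{i,j} - r_{1,i}r_{1,j}$. Standardizing by $\tilde Y_i := (X_i - r_{1,i}x)/\sqrt{1-r_{1,i}^2}$ produces mean zero, variance one normals with correlations
$$\rho_{i,j} := \frac{r_{i,j} - r_{1,i}r_{1,j}}{\sqrt{(1-r_{1,i}^2)(1-r_{1,j}^2)}},$$
each of size $O(\epsilon)$. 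Under this change of variables the event $\{\max_{i\geq 2}X_i \leq x\}$ becomes $\{\tilde Y_i \leq u_i \; \forall i \geq 2\}$, where
$$u_i := x\sqrt{\frac{1-r_{1,i}}{1+r_{1,i}}} = x - xr_{1,i} + O(xr_{1,i}^2),$$
so in particular $u_i^2 = x^2(1+O(\epsilon))$ and $u_i^2 + u_j^2 \geq 2x^2(1-O(\epsilon))$.

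Next I would apply Normal Comparison Result 3 in both directions, with independent standard normals playing the role of $W$, to obtain
$$\left|\p\bigl(\tilde Y_i \leq u_i\ \forall i \geq 2\bigr) - \prod_{i=2}^n \Phi(u_i)\right| \ll e^{-x^2(1+O(\epsilon))} \sum_{2\leq i<j\leq n}|\rho_{i,j}|,$$
using $|\textup{arcsin}(\rho_{i,j})| \ll |\rho_{i,j}|$. Bounding $|\rho_{i,j}| \leq (1+O(\epsilon^2))\bigl(|r_{i,j}| + |r_{1,i}||r_{1,j}|\bigr)$ directly produces the $\sum|r_{i,j}|$ error term claimed in the lemma, together with a residual $\ll e^{-x^2(1+O(\epsilon))}\sum_{i<j}|r_{1,i}||r_{1,j}|$. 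Since $\sum_{i<j}|r_{1,i}||r_{1,j}| \leq \tfrac12\bigl(\sum_i|r_{1,i}|\bigr)^2$, this residual is either bounded by $xe^{-x^2/2(1+O(\epsilon))}\sum_i|r_{1,i}|$ (whenever $\sum_i|r_{1,i}| \ll xe^{x^2/2(1+O(\epsilon))}$), or else the first claimed term $xe^{-x^2/2(1+O(\epsilon))}\sum_i|r_{1,i}|$ already exceeds a positive absolute constant and the lemma is trivially valid since both sides are at most $1$.

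Finally I would compare $\prod_{i=2}^n\Phi(u_i)$ with $\Phi(x)^{n-1}$. A direct calculation using $u_i - x = -xr_{1,i}(1+O(r_{1,i}))$, the identity $\phi(u_i)/\phi(x) = e^{(x^2-u_i^2)/2} = e^{O(x^2|r_{1,i}|)}$, and monotonicity of the density $\phi$ on $[0,\infty)$ yields $|\Phi(u_i) - \Phi(x)| \ll x|r_{1,i}|e^{-x^2/2(1+O(\epsilon))}$. Since $\Phi(x) \geq \Phi(1) > 0$ for $x \geq 1$, writing $\Phi(u_i)/\Phi(x) = 1+\eta_i$ gives $|\eta_i| \ll x|r_{1,i}|e^{-x^2/2(1+O(\epsilon))}$, and the elementary inequality $|\prod_i(1+\eta_i)-1| \leq e^{\sum|\eta_i|}-1$, combined with $\Phi(x)^{n-1} \leq 1$, delivers the first claimed error term (again trivially whenever $\sum|\eta_i|$ fails to be $O(1)$). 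The main obstacle is the bookkeeping in the Normal Comparison Result 3 step: one must track how the $O(\epsilon)$ perturbations in $u_i^2$ and $\rho_{i,j}$ combine so that the exponential factor comes out exactly as $e^{-x^2(1+O(\epsilon))}$, and one must cleanly split the spurious cross terms $|r_{1,i}||r_{1,j}|$ introduced by the standardization between the two pieces of the claimed error bound.
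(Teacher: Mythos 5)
Your proposal is correct and follows essentially the same route as the paper: condition on $X_1 = x$, transform to the standardized conditional variables (what the paper calls $V_i$, uncorrelated with $X_1$), apply Normal Comparison Result 3 in both directions against independent standard normals, split the $|r_{1,i}||r_{1,j}|$ cross terms from $|r_{i,j}|$ using the same "either small or the lemma is trivially true" dichotomy, and finally compare $\prod_i \Phi(u_i)$ with $\Phi(x)^{n-1}$ via termwise estimates. The only cosmetic difference is in that final comparison, where the paper invokes the telescoping bound $|\prod a_i - \prod b_i| \leq \sum|a_i - b_i|$ for factors in $[0,1]$ while you pass to ratios $1+\eta_i$ and use $|\prod(1+\eta_i)-1| \leq e^{\sum|\eta_i|}-1$, which needs an extra (harmless) appeal to the trivial bound when $\sum|\eta_i|$ is large.
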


\begin{proof}[Proof of Lemma \ref{probleaderlemma}]
Consider the transformed random variables
$$ V_{i} := \frac{X_{i}-r_{1,i}X_{1}}{\sqrt{1-r_{1,i}^{2}}}, \;\;\;\;\; 2 \leq i \leq n . $$
It is easy to check that these are all standard normal random variables, and they satisfy
$$ \E V_i X_1 = 0 \;\;\; \forall 2 \leq i \leq n, \;\;\;\;\; \E V_{i}V_{j} = \frac{r_{i,j}-r_{1,i}r_{1,j}}{\sqrt{(1-r_{1,i}^{2})(1-r_{1,j}^{2})}} = O(|r_{i,j}| + |r_{1,i}||r_{1,j}|) = O(\epsilon) \;\;\; \forall i \neq j . $$
In particular, since the $V_i$ are uncorrelated with $X_1$ they are independent from $X_1$, and so we see
$$ \p(\max_{2 \leq i \leq n} X_{i} \leq x | X_{1}=x) = \p\Biggl(V_{i} \leq \frac{x(1 - r_{1,i})}{\sqrt{1-r_{1,i}^{2}}} \; \forall 2 \leq i \leq n \Biggr) = \p\Biggl(V_{i} \leq x\sqrt{\frac{1-r_{1,i}}{1+r_{1,i}}} \; \forall 2 \leq i \leq n \Biggr) . $$

Next, using Normal Comparison Result 3 twice (with $X$ and $W$ swapped the second time) to compare the $V_i$ with independent standard normals we obtain
\begin{eqnarray}
&& \p\Biggl(V_{i} \leq x\sqrt{\frac{1-r_{1,i}}{1+r_{1,i}}} \; \forall 2 \leq i \leq n \Biggr) - \prod_{2 \leq i \leq n} \Phi\left(x\sqrt{\frac{1-r_{1,i}}{1+r_{1,i}}} \right) \nonumber \\
& = & O\left( \sum_{2 \leq i < j \leq n} |\E V_{i}V_{j}| \exp(-x^{2}\frac{((1-r_{1,i})/(1+r_{1,i})) + ((1-r_{1,j})/(1+r_{1,j}))}{2(1+|\E V_{i}V_{j}|)}) \right) \nonumber \\
& = & O\left( e^{-x^{2}(1+O(\epsilon))} \sum_{2 \leq i < j \leq n} (|r_{1,i}||r_{1,j}| + |r_{i,j}|) \right) . \nonumber
\end{eqnarray}
Notice here that the contribution from $|r_{i,j}|$ is acceptable for Lemma \ref{probleaderlemma}, and the contribution from $|r_{1,i}||r_{1,j}|$ may be rewritten as
$$ \left( e^{-x^{2}/2(1+O(\epsilon))} \sum_{2 \leq i \leq n} |r_{1,i}| \right)^{2} . $$
If this term is smaller than one then it is acceptable because it is smaller than the first error term in Lemma \ref{probleaderlemma}, and if it is bigger than one then so is the first error term in Lemma \ref{probleaderlemma}, so the lemma is trivially true.

To finish it will suffice to prove that
$$ \left| \prod_{2 \leq i \leq n} \Phi\left(x\sqrt{\frac{1-r_{1,i}}{1+r_{1,i}}} \right) - \Phi(x)^{n-1}\right| \ll x e^{-(x^{2}/2)(1+O(\epsilon))} \sum_{2 \leq i \leq n} |r_{1,i}| . $$
But this follows simply because we have
$$ \Phi\left(x\sqrt{\frac{1-r_{1,i}}{1+r_{1,i}}} \right) - \Phi(x) = \frac{1}{\sqrt{2\pi}} \int_{x}^{x\sqrt{(1-r_{1,i})/(1+r_{1,i})}} e^{-y^{2}/2} dy = O\left(x|r_{1,i}| e^{-(x^{2}/2)(1+O(\epsilon))} \right) $$
for each $2 \leq i \leq n$, on noting that $\sqrt{\frac{1-r_{1,i}}{1+r_{1,i}}} = 1 + O(|r_{1,i}|) = 1+ O(\epsilon)$ for all such $i$.
\end{proof}

\begin{proof}[Proof of Theorem \ref{probleader}]
Lemma \ref{probleaderlemma} looks very close to Theorem \ref{probleader}, but it doesn't immediately yield the theorem because it is quite weak (possibly worse than trivial) unless $x$ is fairly large. (As $x$ becomes smaller we expect the probabilities on the left to become very small, whereas the bound on the right becomes larger.) Fortunately we can deal with the case of small $x$ using Normal Comparison Result 2.

Indeed, if we choose $\delta = 1/1000$, say, then we clearly have
$$ |\p(X_{1} > \max_{2 \leq i \leq n} X_{i}) - \p(X_{1} > \max_{2 \leq i \leq n} X_{i}, \; \textrm{and} \, X_{1} > \sqrt{(2-\delta)\log n})| \leq \p(\max_{2 \leq i \leq n} X_{i} \leq \sqrt{(2-\delta)\log n}) , $$
and by Normal Comparison Result 2 the right hand side is
$$ \ll e^{-\Theta(n^{1/20000}/\sqrt{\log n})} + n^{-1/50000000\epsilon} \ll n^{-100} , $$
since $\epsilon$ is assumed to be small enough. We would have the same bound if the $X_i$ were replaced by independent normals $\tilde{X_i}$, and in that case we have $\p(\tilde{X_{1}} > \max_{2 \leq i \leq n} \tilde{X_{i}}) = 1/n$ by symmetry, so to prove Theorem \ref{probleader} it will suffice to show that
\begin{eqnarray}
&& |\p(X_{1} > \max_{2 \leq i \leq n} X_{i}, \; \textrm{and} \, X_{1} > \sqrt{(2-\delta)\log n}) - \p(\tilde{X_{1}} > \max_{2 \leq i \leq n} \tilde{X_{i}}, \; \textrm{and} \, \tilde{X_{1}} > \sqrt{(2-\delta)\log n}) | \nonumber \\
& \ll & n^{-1.99}\sum_{2 \leq i \leq n} |r_{1,i}| + n^{-2.99} \sum_{2 \leq i < j \leq n} |r_{i,j}| . \nonumber
\end{eqnarray}

But using Lemma \ref{probleaderlemma} we have
\begin{eqnarray}
&& \p(X_{1} > \max_{2 \leq i \leq n} X_{i}, \; \textrm{and} \; X_{1} > \sqrt{(2-\delta)\log n}) - \p(\tilde{X_{1}} > \max_{2 \leq i \leq n} \tilde{X_{i}}, \; \textrm{and} \, \tilde{X_{1}} > \sqrt{(2-\delta)\log n}) \nonumber \\
& = & \int_{\sqrt{(2-\delta)\log n}}^{\infty} \frac{e^{-x^{2}/2}}{\sqrt{2\pi}} \p(\max_{2 \leq i \leq n} X_{i} \leq x | X_{1}=x) dx - \int_{\sqrt{(2-\delta)\log n}}^{\infty} \frac{e^{-x^{2}/2}}{\sqrt{2\pi}} \Phi(x)^{n-1} dx  \nonumber \\
& = & O\left(n^{-(2-\delta)(1+O(\epsilon))}\sum_{2 \leq i \leq n} |r_{1,i}| + n^{-(3/2)(2-\delta)(1+O(\epsilon))} \sum_{2 \leq i < j \leq n} |r_{i,j}| \right) . \nonumber
\end{eqnarray}
The Theorem follows on remembering that $\delta = 1/1000$ and $\epsilon$ is sufficiently small.
\end{proof}

\subsection{Preparation for the $k$-contestant theorems}
In this subsection we shall develop two lemmas that will be required later for the proof of Theorem \ref{OrderFirstk1}. We present these in a moderate amount of generality, and they might be of wider interest.

We begin with a kind of hybrid normal comparison inequality.
\begin{lem}\label{hybridcomparison}
Let $(W_i)_{i \in \mathcal{I}}$ be a finite collection of jointly standard normal random variables, let $\epsilon \geq \epsilon_1 > 0$ be small, and suppose that
$$ |\E W_i W_j| \leq \epsilon \; \forall i \neq j, \;\;\; \text{and} \;\;\; \E W_i W_j \leq \epsilon_1 \; \forall i \neq j . $$

Then if $w \geq 1$, and $w/2 \leq w_i \leq 2w$ for all $i \in \mathcal{I}$, we have
\begin{eqnarray}
&& \left| \p(W_i \leq w_i \; \forall i \in \mathcal{I}) - \prod_{i \in \mathcal{I}} \Phi(w_i) \right| \nonumber \\
& \ll & \left(\prod_{i \in \mathcal{I}} \Phi((1+O(\epsilon))w_i) + e^{-\Theta((\epsilon w)^2/(\epsilon_1 + \epsilon^3))} \right) \sum_{\substack{i,j \in \mathcal{I}, \\ i \neq j}} |\E W_i W_j| e^{-(1/2+O(\epsilon))(w_i^2 + w_j^2)} . \nonumber
\end{eqnarray}
\end{lem}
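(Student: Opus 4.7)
The strategy is a Slepian-type interpolation between the covariance matrix $\Sigma := (\E W_iW_j)_{i,j \in \mathcal{I}}$ and the identity, combined with careful estimates of the resulting conditional Gaussian probabilities. Set $\Sigma(t) := (1-t)I + t\Sigma$ for $t \in [0,1]$; since $I$ is positive definite and $\Sigma$ is positive semi-definite, $\Sigma(t)$ is positive definite for all $t \in [0,1]$ (strictly so for $t<1$, and for $t=1$ by non-degeneracy of $(W_i)$). Letting $W(t)$ be a centered Gaussian vector with covariance $\Sigma(t)$ and $P(t) := \p(W_i(t) \le w_i\ \forall i \in \mathcal{I})$, so that $P(0) = \prod_{i} \Phi(w_i)$, and differentiating under the integral using the identity $\partial \phi_\Sigma/\partial \sigma_{ij} = \partial^2 \phi_\Sigma/\partial x_i \partial x_j$ ($i \neq j$) together with $d\Sigma(t)/dt = \Sigma - I$ (whose diagonal vanishes), one obtains
\[
P(1) - \prod_{i \in \mathcal{I}} \Phi(w_i) \;=\; \sum_{i<j} \E W_iW_j \int_0^1 \varphi_{ij}^{(t)}(w_i, w_j)\, G_{ij}^{(t)}(w_i, w_j)\, dt,
\]
where $\varphi_{ij}^{(t)}$ is the bivariate Gaussian density at $(w_i, w_j)$ with correlation $t\E W_iW_j$, and $G_{ij}^{(t)}(w_i, w_j) := \p(W_k(t) \le w_k\ \forall k \neq i, j \mid W_i(t) = w_i,\, W_j(t) = w_j)$. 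Taking absolute values reduces the task to upper-bounding the two non-negative factors in the integrand.

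The bivariate density is easy: since $|\E W_iW_j| \le \epsilon$ and $w_i, w_j \in [w/2, 2w]$, a direct expansion of the Gaussian exponent yields $\varphi_{ij}^{(t)}(w_i, w_j) \ll \exp(-(1/2 + O(\epsilon))(w_i^2 + w_j^2))$, matching the exponential weight on the right-hand side of the lemma. The substantive step is the bound on $G_{ij}^{(t)}$. Standard Gaussian conditioning shows that $(W_k(t))_{k \neq i,j}$ is jointly normal with means $\mu_k^{(t)} = t(\E W_iW_k \cdot w_i + \E W_jW_k \cdot w_j)(1 + O(\epsilon^2))$ (so $|\mu_k^{(t)}| \ll \epsilon w$), variances $1 - O(\epsilon^2)$, and pairwise correlations $t\,\E W_kW_l + O(\epsilon^2)$. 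After renormalising to unit variance, $G_{ij}^{(t)}(w_i, w_j) = \p(V_k \le u_k\ \forall k \neq i, j)$ with the $V_k$ jointly standard normal (pairwise correlations still bounded by $\epsilon$ in absolute value and by $\epsilon_1$ on the positive side) and $u_k \in [(1-O(\epsilon))w_k,\, (1+O(\epsilon))w_k]$. Applying Normal Comparison Result 3 to compare $(V_k)$ with independent standard normals, together with monotonicity of $\Phi$, leads to
\[
G_{ij}^{(t)}(w_i, w_j) \;\ll\; \prod_{k \neq i,j} \Phi((1+O(\epsilon))w_k) \;+\; \exp\bigl(-\Theta((\epsilon w)^2/(\epsilon_1+\epsilon^3))\bigr),
\]
the second term being a Gaussian ``floor'' obtained through a variant of Normal Comparison Result 2, after decomposing $V_k$ into a common positive-correlation piece of variance $\epsilon_1$, a mixed-sign cross-term piece of variance $\epsilon^3$, and an independent residual.

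To assemble: substituting into the interpolation formula, observing that $\prod_{k \neq i, j} \Phi((1+O(\epsilon))w_k) \ll \prod_{k \in \mathcal{I}} \Phi((1+O(\epsilon))w_k)$ (the missing pair of factors is bounded below by an absolute positive constant, since $w_k \ge 1/2$), and integrating over $t \in [0,1]$ yields the stated inequality. The chief obstacle is justifying the denominator $\epsilon_1 + \epsilon^3$ in the Gaussian floor rather than the crude $\epsilon^2$ one would obtain by using $|\E W_iW_j| \le \epsilon$ alone. This requires genuinely exploiting the hypothesis $\E W_iW_j \le \epsilon_1$ on positive correlations, by splitting each covariance into positive and negative parts, using $\sum_k (r_{ki}^+)^2 \le \epsilon_1 \sum_k r_{ki}^+$ to extract $\epsilon_1$ from the positive side, and Cauchy--Schwarz across signs to extract the $\epsilon^3$ contribution from the mixed terms. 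It is precisely this sharper floor that makes the lemma strong enough to support the later proof of Theorem \ref{OrderFirstk1}.
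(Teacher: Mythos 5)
Your overall architecture is the right one, and indeed matches the paper's: start from the Plackett/Slepian interpolation hidden inside the Li--Shao proof (yielding a sum over pairs $i<j$ of $|\E W_iW_j|$ times a bivariate density times a conditional probability $P_{i,j}$); bound the bivariate density by $e^{-(1/2+O(\epsilon))(w_i^2+w_j^2)}$; bound $P_{i,j}$ by writing it as $\p(V_k \le u_k\ \forall k)$ for the conditioned residuals $V_k$, renormalising, and then running a Normal Comparison Result~2 style Slepian-plus-splitting argument; and finally absorb the missing $i,j$ factors into $\prod_{k\in\mathcal I}\Phi((1+O(\epsilon))w_k)$. All of that is what the paper does.

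The gap is in the one step that the lemma is really about, namely showing that the residual correlations $\E V_kV_l$ are at most $C(\epsilon_1+\epsilon^3)$. You write the conditional correlations as $t\,\E W_kW_l + O(\epsilon^2)$ and acknowledge this would only give a floor with denominator $\epsilon_1+\epsilon^2$; but your proposed repair --- ``splitting each covariance into positive and negative parts, using $\sum_k (r_{ki}^+)^2 \le \epsilon_1 \sum_k r_{ki}^+$, and Cauchy--Schwarz across signs'' --- is not a correct argument, and I don't see how any version of it gives the needed \emph{pointwise} upper bound on each $\E V_kV_l$, which is what Slepian requires. The paper does not sum over $k$ at all, and no Cauchy--Schwarz is used. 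What is actually needed is to keep the $O(\epsilon^2)$-size correction explicit rather than lumping it into an error term:
\[
\E V_kV_l \;=\; \E W_kW_l \;-\; \E W_kW_i\,\E W_iW_l \;-\; \E W_kW_j\,\E W_jW_l \;+\; O(\epsilon^3),
\]
and then observe that the cross term $-\E W_kW_i\,\E W_iW_l$, \emph{if positive}, forces the two factors to have opposite signs, so that the positive one is at most $\epsilon_1$ while the negative one has modulus at most $\epsilon$, giving a contribution of at most $\epsilon\epsilon_1 \le \epsilon_1$; combined with $\E W_kW_l \le \epsilon_1$ and the genuine $O(\epsilon^3)$ tail of the expansion, this yields $\E V_kV_l \le C(\epsilon_1+\epsilon^3)$. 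Your phrasing loses both the exact form of the correction (essential, since an anonymous $O(\epsilon^2)$ cannot produce anything below $\epsilon^2$) and the elementary sign argument on the product that makes the mixed term small. So you have correctly identified where the difficulty lies, but not the mechanism that resolves it.
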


The key advantage of Lemma \ref{hybridcomparison}, as opposed to a result like Normal Comparison Result 3, is the presence of the bracketed prefactor on the right hand side, which can make the inequality much more powerful if $\prod_{i \in \mathcal{I}} \Phi(w_i)$ is small. Notice also that if one has a stronger upper bound for $\E W_i W_j$ than an absolute value bound (i.e. if $\epsilon_1$ is appreciably smaller than $\epsilon$), then the second term in the bracket is improved. This should not be too surprising, since Slepian's Lemma (Normal Comparison Result 1) implies that negative correlations make normal random variables stochastically smaller.

\begin{proof}[Proof of Lemma \ref{hybridcomparison}]
If one looks inside the proofs of the usual normal comparison lemmas (see e.g. Li and Shao's paper~\cite{lishao}), they yield that
\begin{eqnarray}
|\p(W_i \leq w_i \; \forall i \in \mathcal{I}) - \prod_{i \in \mathcal{I}} \Phi(w_i)| & \ll & \sum_{\substack{i,j \in \mathcal{I}, \\ i \neq j}} P_{i,j} |\E W_i W_j| e^{-(w_i^2 + w_j^2)/2(1+|\E W_i W_j|)} \nonumber \\
& = & \sum_{\substack{i,j \in \mathcal{I}, \\ i \neq j}} P_{i,j} |\E W_i W_j| e^{-(1/2+O(\epsilon))(w_i^2 + w_j^2)} , \nonumber
\end{eqnarray}
where
$$ P_{i,j} := \sup_{0 \leq h \leq 1} \p(W_{k}^{(h)} \leq w_k \, \forall \, k \in \mathcal{I} | W_{i}^{(h)}= w_i, \, W_{j}^{(h)}= w_j) , $$
and the $W_{k}^{(h)}$ are mean zero, variance one, jointly normal random variables with correlations
$$ \E W_{k}^{(h)}W_{l}^{(h)} = h \E W_k W_l, \,\,\,\,\, k \neq l . $$
(Thus, in particular, when $h=1$ the $W_{k}^{(h)}$ are simply the $W_{k}$, and when $h=0$ the $W_{k}^{(h)}$ are independent standard normal random variables.) We will show that
$$ \p(W_{k} \leq w_k \, \forall \, k \in \mathcal{I} | W_{i} = w_i, \, W_{j} = w_j) \ll \prod_{i \in \mathcal{I}} \Phi((1+O(\epsilon))w_i) + e^{-\Theta((\epsilon w)^2/(\epsilon_1 + \epsilon^3))} $$
for any $i \neq j$. Exactly the same argument would yield the corresponding estimate for any $0 \leq h \leq 1$ (uniformly over $h$), thus proving Lemma \ref{hybridcomparison}.

Indeed, if we define
$$ V_k := W_k - \frac{(\E W_k W_i - \E W_k W_j \E W_i W_j)}{1 - (\E W_i W_j)^2} W_i - \frac{(\E W_k W_j - \E W_k W_i \E W_i W_j)}{1 - (\E W_i W_j)^2} W_j \;\;\; \forall k \in \mathcal{I}\backslash \{i,j\}, $$
then one can check that $\E V_k W_i = \E V_k W_j = 0$, and so if $k, l \in \mathcal{I} \backslash \{i,j\}$ then
$$ \E V_k V_l = \E V_k W_l  - 0 - 0 = \E V_k W_l = \E W_k W_l - \E W_k W_i \E W_i W_l - \E W_k W_j \E W_j W_l + O(\epsilon^3) . $$
In particular, it follows that $\E V_{k}^{2} = 1 + O(\epsilon^{2})$. One can also check that when $k \neq l$ we have the upper bound
$$ \E V_{k} V_{l} \leq \epsilon_1 + 2\epsilon \epsilon_1 + O(\epsilon^{3}) \leq (C/2)(\epsilon_{1} + \epsilon^{3}) , $$
for a suitable positive constant $C$, since $\E W_k W_l \leq \epsilon_1$ and if $- \E W_k W_i \E W_i W_l$ is positive then one of the factors must be positive and one negative, so one has size at most $\epsilon_1$ and the other has size at most $\epsilon$.

Now if we set $\tilde{V_{k}} := \frac{V_{k}}{\sqrt{\E V_{k}^{2}}}$, then $\p(W_{k} \leq w_k \, \forall \, k \in \mathcal{I} | W_{i} = w_i, \, W_{j} = w_j)$ is
\begin{eqnarray}
& = & \p(V_{k} \leq w_k - \frac{(\E W_k W_i - \E W_k W_j \E W_i W_j)}{1 - (\E W_i W_j)^2} w_i - \frac{(\E W_k W_j - \E W_k W_i \E W_i W_j)}{1 - (\E W_i W_j)^2} w_j \, \forall \, k \in \mathcal{I} \backslash \{i,j\}) \nonumber \\
& = & \p(\tilde{V_{k}} \leq (1+O(\epsilon))w_{k} \; \forall \, k \in \mathcal{I} \backslash \{i,j\}) , \nonumber
\end{eqnarray}
where the first equality uses the fact that $V_{k}$ is uncorrelated with, and therefore independent of, $W_{i}$ and $W_{j}$ (so we can remove the conditioning), and the second equality uses the fact that $w_{i}, w_{j} \asymp w \asymp w_{k}$.

Now the $\tilde{V_{k}}$ are jointly standard normal random variables with off-diagonal correlations that are $\leq (1+O(\epsilon^{2}))\E V_{k} V_{l} \leq C (\epsilon_{1} + \epsilon^{3})$, so by Slepian's Lemma (Normal Comparison Result 1) the probability is at most as large as if all the off-diagonal correlations were equal to $C(\epsilon_{1} + \epsilon^{3})$. Arguing as in the proof of Normal Comparison Result 2, it follows that
\begin{align*}
&\p(\tilde{V_{k}} \leq (1+O(\epsilon))w_{k} \; \forall \, k \in \mathcal{I} \backslash \{i,j\})\\
& \leq \frac{1}{\sqrt{2\pi}} \int_{-\infty}^{\infty} \prod_{\substack{k \in \mathcal{I}, \\ k \neq i,j}} \Phi\left(\frac{(1+O(\epsilon))w_{k} + \sqrt{C(\epsilon_{1} + \epsilon^{3})}y}{\sqrt{1-C(\epsilon_{1} + \epsilon^{3})}}\right) e^{-y^{2}/2} dy ,
\end{align*}
and Lemma \ref{hybridcomparison} follows on splitting the integral at $y = \epsilon w/\sqrt{C(\epsilon_{1} + \epsilon^{3})}$.
\end{proof}

We finish this section by applying Lemma \ref{hybridcomparison} to prove the following general estimate for $\p(\max_{k+1 \leq i \leq n} X_{i} \leq x_k | X_{1}=x_1 , ..., X_{k} = x_k)$, which is what we shall actually use later when proving Theorem \ref{OrderFirstk1}.
\begin{lem}\label{lemforfirstk}
Let $X_{1},...,X_{n}$ be mean zero, variance one, jointly normal random variables, and write $r_{i,j} := \E X_{i}X_{j}$. Let $\epsilon \geq \epsilon_{1}, \epsilon_{2} > 0$ be sufficiently small, and suppose that
$$ |r_{i,j}| \leq \epsilon \; \forall i \neq j, \;\;\; \text{and} \;\;\; r_{i,j} \leq \epsilon_{1} \; \forall i \neq j . $$
Let $1 \leq k \leq n-2$ be such that $\epsilon k$ is sufficiently small, and suppose further that
$$  \sum_{l=1}^{k} \sum_{s=1, s \neq l}^{k} |r_{i,l}| |r_{j,s}| \Bigg( |r_{l,s}|+\sum_{t = 1, t \neq l,s}^{k} |r_{l,t}||r_{s,t}| + \epsilon^2 \Big(\sum_{\substack{1 \leq t,u \leq k, \\ t \neq u}} |r_{t,u}| \Big) \Bigg) \leq \epsilon_2 $$
for any distinct $k + 1 \leq i,j \leq n$.

Then for any real numbers $x_{1}, ..., x_{k-1} \geq x_{k} \geq 1$ such that $\sum_{i=1}^{k} x_i^2 \ll 1/\epsilon$, we have
\begin{eqnarray}
&& \left| \p(\max_{k+1 \leq i \leq n} X_{i} \leq x_k | X_{1}=x_1 , ..., X_{k} = x_k) - \prod_{i=k+1}^{n} \Phi\left(w_i \right) \right| \nonumber \\
& \ll & e^{-\left(1+O(\sqrt{\epsilon k})\right)x_k^2} \left(\prod_{i=k+1}^{n} \Phi\left((1+O(\epsilon)) w_i \right) + e^{-\Theta\big((\epsilon x_k)^2/(\epsilon_1 + \epsilon_2 + \epsilon^3)\big)} \right) \\
& &\quad \quad  \quad \quad \quad \times  \left( \sum_{k+1 \leq i < j \leq n} |r_{i,j}| + \sum_{l=1}^{k} \left(\sum_{i=k+1}^{n} |r_{i,l}| \right)^2 \right) , \nonumber
\end{eqnarray}
for certain numbers $w_i$ that satisfy
$$w_i= (1+O(\epsilon^2 k))x_k + O\Big(\sum_{l=1}^{k} x_l |r_{i,l}| \Big)=  \big(1+O\big(\sqrt{\epsilon k}\big)\big)x_k. $$
\end{lem}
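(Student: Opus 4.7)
The plan is to condition on the values $X_{1}=x_{1},\dots,X_{k}=x_{k}$, reducing the problem to an orthant probability for jointly normal random variables, and then apply Lemma \ref{hybridcomparison}. By the standard formula for the conditional distribution of a Gaussian vector, if we write $R_{k}$ for the $k\times k$ covariance matrix of $(X_{1},\dots,X_{k})$, set $\mathbf{r}_{i}:=(r_{i,1},\dots,r_{i,k})^{T}$ for $i>k$, and let $\mathbf{x}:=(x_{1},\dots,x_{k})^{T}$, then conditionally $(X_{k+1},\dots,X_{n})$ is jointly normal with means $\mu_{i} = \mathbf{r}_{i}^{T}R_{k}^{-1}\mathbf{x}$ and covariance entries $\sigma_{i,j}=r_{i,j}-\mathbf{r}_{i}^{T}R_{k}^{-1}\mathbf{r}_{j}$. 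After rescaling to standard normals $\tilde{V}_{i}:=(X_{i}-\mu_{i})/\sqrt{\sigma_{i,i}}$, the event $\{\max_{i>k}X_{i}\leq x_{k}\}$ becomes $\{\tilde{V}_{i}\leq w_{i}\;\forall\,i>k\}$, where $w_{i}=(x_{k}-\mu_{i})/\sqrt{\sigma_{i,i}}$.

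The next step is to control $R_{k}^{-1}$ and the quantities $\mu_{i}$, $\sigma_{i,i}$, $\sigma_{i,j}$ via Lemma \ref{matrixlem}, applicable since $\epsilon k$ is small. The diagonal entries of $R_{k}^{-1}$ are $1+O(\epsilon\sum_{l\neq m}|r_{l,m}|)$, and its $(l,s)$-off-diagonal entries are $O(|r_{l,s}|+\sum_{t\neq l,s}|r_{l,t}||r_{s,t}|+\epsilon^{2}\sum|r_{l,m}|)$. Substituting this into $\mu_{i}$, the diagonal piece produces $\sum_{l}r_{i,l}x_{l}(1+O(\epsilon^{2}k))$ and the off-diagonal piece is estimated using the same matrix bounds. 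A Cauchy--Schwarz step combined with the hypothesis $\sum_{l}x_{l}^{2}\ll 1/\epsilon$ shows $\sum_{l}|r_{i,l}|x_{l}=O(\sqrt{\epsilon k})$. Similarly $1-\sigma_{i,i}=\mathbf{r}_{i}^{T}R_{k}^{-1}\mathbf{r}_{i}$ has leading term $\sum_{l}r_{i,l}^{2}=O(\epsilon^{2}k)$, so $\sqrt{\sigma_{i,i}}=1+O(\epsilon^{2}k)$. Combining these yields the advertised expression
$$w_{i}=(1+O(\epsilon^{2}k))x_{k}+O\Bigl(\sum_{l}x_{l}|r_{i,l}|\Bigr)=(1+O(\sqrt{\epsilon k}))x_{k},$$
and shows that $x_{k}/2\leq w_{i}\leq 2x_{k}$ so that Lemma \ref{hybridcomparison} is applicable with $w:=x_{k}$.

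The heart of the argument, and the step I expect to be the main obstacle, is producing a sufficiently sharp estimate for the off-diagonal conditional covariances $\sigma_{i,j}$. Again splitting $\mathbf{r}_{i}^{T}R_{k}^{-1}\mathbf{r}_{j}$ into diagonal and off-diagonal parts of $R_{k}^{-1}$, the diagonal part contributes $\sum_{l}r_{i,l}r_{j,l}(1+O(\epsilon^{2}k))$, while the off-diagonal part is controlled precisely by the sum appearing in the hypothesis, which is at most $\epsilon_{2}$. This gives both a pointwise bound $|\sigma_{i,j}|\ll|r_{i,j}|+\sum_{l}|r_{i,l}||r_{j,l}|+\epsilon_{2}$ and a uniform upper bound of the form $\sigma_{i,j}\leq \epsilon_{1}+O(\epsilon_{2}+\epsilon^{3})$ (the $\epsilon^{3}$ coming from the residual errors in the matrix inverse and the $O(\epsilon^{2}k)$ corrections). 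After normalising to $\tilde{V}_{i}$, these become the pointwise and upper bounds on $\E\tilde{V}_{i}\tilde{V}_{j}$ required to feed into Lemma \ref{hybridcomparison}.

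Finally, I would invoke Lemma \ref{hybridcomparison} with $\mathcal{I}=\{k+1,\dots,n\}$, $W_{i}=\tilde{V}_{i}$, the above thresholds $w_{i}$, and with the parameter $\epsilon_{1}$ of that lemma replaced by $O(\epsilon_{1}+\epsilon_{2}+\epsilon^{3})$. This produces the exponential factor $e^{-\Theta((\epsilon x_{k})^{2}/(\epsilon_{1}+\epsilon_{2}+\epsilon^{3}))}$ and, since $w_{i}^{2}+w_{j}^{2}=(2+O(\sqrt{\epsilon k}))x_{k}^{2}$ uniformly, the overall prefactor $e^{-(1+O(\sqrt{\epsilon k}))x_{k}^{2}}$. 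Summing the pointwise bound on $|\E\tilde{V}_{i}\tilde{V}_{j}|$ over $i\neq j$ in $\{k+1,\dots,n\}$ yields $\sum_{k+1\leq i<j\leq n}|r_{i,j}|+\sum_{i\neq j}\sum_{l}|r_{i,l}||r_{j,l}|$; reversing the order of summation in the double sum gives $\sum_{l=1}^{k}(\sum_{i=k+1}^{n}|r_{i,l}|)^{2}$ up to a harmless diagonal term, exactly as claimed. Putting the three factors together yields the bound stated in the lemma.
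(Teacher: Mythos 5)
Your proposal follows essentially the same route as the paper: condition on $(X_1,\dots,X_k)$ using the standard Gaussian regression formulas (the paper writes this as a direct linear transformation $V_i := X_i - \sum_l u_{i,l}X_l$, with $u_{i,l} = (R_k^{-1}\mathbf{r}_i)_l$, and checks independence from $X_1,\dots,X_k$, but this is the same computation), control $R_k^{-1}$ via Lemma \ref{matrixlem}, derive the same expressions for $w_i$ and the conditional correlations, and then feed everything into Lemma \ref{hybridcomparison}.

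One bookkeeping slip is worth flagging. You state the pointwise bound $|\sigma_{i,j}| \ll |r_{i,j}| + \sum_l |r_{i,l}||r_{j,l}| + \epsilon_2$, and then claim that summing this over $i \neq j$ yields $\sum_{i<j}|r_{i,j}| + \sum_l \bigl(\sum_i |r_{i,l}|\bigr)^2$. But summing your stated bound directly produces an extra term of order $n^2 \epsilon_2$, which is not in the target. The resolution (as in the paper's proof, around equation \eqref{SizeCorrW}) is to note that the inner bracket $|r_{l,s}| + \sum_t |r_{l,t}||r_{s,t}| + \epsilon^2 \sum_{t\neq u}|r_{t,u}|$ is also pointwise $\ll \epsilon$ because $\epsilon k$ is small, so that the residual term in $\sigma_{i,j}$ is $\ll \epsilon \sum_l \sum_{s\neq l}|r_{i,l}||r_{j,s}|$; after summing over $i,j$ and applying Cauchy--Schwarz in $l$, this contributes $\ll \epsilon k \sum_l \bigl(\sum_i |r_{i,l}|\bigr)^2 \ll \sum_l \bigl(\sum_i |r_{i,l}|\bigr)^2$. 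The $\epsilon_2$ hypothesis is used only for the uniform \emph{upper} (not absolute) bound $\sigma_{i,j} \leq O(\epsilon_1 + \epsilon_2)$ that sets the parameter in Lemma \ref{hybridcomparison}, not for the summed error estimate. With that correction your argument closes.
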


The reader may wish to compare Lemma \ref{lemforfirstk} with Lemma \ref{probleaderlemma}. The first bracketed term on the right hand side of Lemma \ref{lemforfirstk} will be crucial when we come to apply the lemma to prime number races with $k$ large, since on the relevant range of $x_1, ..., x_k$ it will turn out that $\prod_{i=k+1}^{n} \Phi\left(w_i \right)$ is rather small (in fact of size roughly $e^{-Ck}$).

\begin{proof}[Proof of Lemma \ref{lemforfirstk}]
Note first that provided $\epsilon k \leq 1/2$, Lemma \ref{matrixlem} implies that the covariance (sub-)matrix $\mathcal{A} := (r_{i,j})_{1 \leq i,j \leq k}$ is invertible. Let $(\tilde{r}_{i,j})_{1 \leq i,j \leq k} := \mathcal{A}^{-1}$ denote the inverse matrix, and for all $k+1 \leq i \leq n$ and all $1 \leq l \leq k$ set
$$ u_{i, l} := \sum_{s=1}^k \widetilde{r}_{l,s} r_{i,s} , \;\;\; \text{and} \;\;\; V_{i} := X_{i}-\sum_{l=1}^k u_{i, l} X_l . $$
Then the random variables $V_{i}$ are zero mean, jointly normal random variables, and for any $1 \leq t \leq k$ they satisfy
$$ \E V_{i} X_{t} = r_{i,t} - \sum_{l=1}^{k} u_{i,l} r_{l,t} = r_{i,t} - \sum_{1 \leq l,s \leq k} r_{i,s} \tilde{r}_{l,s} r_{l,t} = r_{i,t} - \sum_{1 \leq s \leq k} r_{i,s} \textbf{1}_{s=t} = 0 . $$
In other words, the random variables $V_{i}$ are uncorrelated with, and therefore independent from, all of $X_1, X_2, \dots, X_k$. Let us also note that for any $k+1 \leq i \leq n$ and any $1 \leq l \leq k$, using Lemma \ref{matrixlem} to estimate the $\widetilde{r}_{l,s}$ yields that
\begin{equation}\label{sizecoeff}
 u_{i,l} = \Big(1 + O\Big(\epsilon \sum_{\substack{1 \leq t,u \leq k, \\ t \neq u}} |r_{t,u}|\Big)\Big) r_{i,l} + O\left(\sum_{\substack{s=1, \\ s \neq l}}^k |r_{i,s}| \left(|r_{l,s}| + \sum_{t \neq l,s} |r_{l,t}||r_{s,t}| + \epsilon^{2} \sum_{\substack{1 \leq t,u \leq k, \\ t \neq u}} |r_{t,u}|\right) \right),
 \end{equation}
and using our assumptions that the off-diagonal covariances are bounded by $\epsilon$, and that $\epsilon k$ is small, it follows in particular that 
\begin{equation}\label{sizecoeff2}
|u_{i,l}| \ll |r_{i,l}| + \epsilon \sum_{\substack{s=1, \\ s \neq l}}^{k} |r_{i,s}| \ll \epsilon.
\end{equation}
 We also see that
\begin{equation}\label{sizecoeff3}
 \E V_{i}^{2} = \E V_{i}X_{i} - 0 = \E V_{i}X_{i} = 1 + O\left(\epsilon \sum_{l=1}^{k} |u_{i,l}|\right) = 1 + O(\epsilon^2 k) . 
 \end{equation}

In view of the above discussion, if we rewrite the event $\max_{k+1 \leq i \leq n} X_{i} \leq x_k$ in terms of the $V_{i}$ so that we can remove the conditioning, we find
\begin{align*}
 \p(\max_{k+1 \leq i \leq n} X_{i} \leq x_k | X_{1}=x_1 , ..., X_{k} = x_k)
&= \p\left(V_{i} \leq x_{k} -\sum_{l=1}^{k} x_l u_{i,l} \; \forall k+1 \leq i \leq n \right) \\
&= \p\left(W_{i} \leq w_i\;  \forall k+1 \leq i \leq n \right) 
\end{align*}
where $w_i:= \frac{1}{{\sqrt{\E V_{i}^{2}}}}\Big(x_{k} -\sum_{l=1}^{k} x_l u_{i,l}\Big)$, and  $W_i := \frac{V_{i}}{\sqrt{\E V_{i}^{2}}}$ are mean zero, variance one, jointly normal random variables. Proceeding to estimate the off-diagonal correlations, we note first that
$$\E W_{i}W_{j} = (1+O(\epsilon^2 k))\E V_{i} V_{j} = (1+O(\epsilon^2 k)) \E X_{i}V_{j} , $$ 
and then from \eqref{sizecoeff} we deduce that 
\begin{equation}\label{SizeCorrW}
\begin{aligned}
& \E W_{i}W_{j}= (1+O(\epsilon^2 k))r_{i,j} - \Big(1+O\Big(\epsilon^2 k + \epsilon \sum_{\substack{1 \leq t,u \leq k, \\ t \neq u}} |r_{t,u}|\Big)\Big)\sum_{l=1}^{k} r_{i,l} r_{j,l}  \\
& + O\left( \sum_{l=1}^{k} \sum_{s=1, s \neq l}^{k} |r_{i,l}| |r_{j,s}| \Bigg(|r_{l,s}|+ \sum_{t = 1, t \neq l,s}^{k} |r_{l,t}||r_{s,t}| + \epsilon^2 \Big(\sum_{\substack{1 \leq t,u \leq k, \\ t \neq u}} |r_{t,u}| \Big) \Bigg)\right).
\end{aligned}
\end{equation}
Since $|r_{i,j}| \leq \epsilon$ and $r_{i,j} \leq \epsilon_{1}$ when $i \neq j$, and since $\epsilon k$ is small, it follows in particular that whenever $i \neq j$ we have $|\E W_i W_j| = O(\epsilon)$, and
$$
 \E W_i W_j \leq (1+O(\epsilon^2 k))\epsilon_1 + (1+O(\epsilon^{2}k^2))k\epsilon \epsilon_1 + O(\epsilon_2) = O(\epsilon_1 + \epsilon_2) .
$$

Finally, using that  $\sum_{l=1}^k x_l\leq \sqrt{k\sum_{l=1}^{k} x_l^2}\ll \sqrt{k/\epsilon}$ (which follows from the Cauchy--Schwarz inequality) together with  \eqref{sizecoeff2} we find
\begin{align*}
w_i=(1+O(\epsilon^2 k))x_k + O\Big(\sum_{l=1}^{k} x_l |r_{i,l}|+ \sqrt{\epsilon k}\sum_{s=1}^{k} |r_{i,s}|\Big)
= \big(1+O(\epsilon^2 k)\big)x_k + O\left(\sum_{l=1}^{k} x_l |r_{i,l}| \right).
\end{align*}
Here the final equality follows because the $x_l$ are all $\geq 1$ whereas $\epsilon k$ is small. We also have $\sum_{l=1}^{k} x_l |r_{i,l}| \leq \epsilon \sum_{l=1}^{k} x_l \ll \sqrt{\epsilon k}$, and therefore $w_i=  \big(1+O\big(\sqrt{\epsilon k}\big)\big)x_k$, and hence $ x_k/2 \leq w_i \leq 2x_k $ for all $k+1 \leq i \leq n$.

This all means that Lemma \ref{hybridcomparison} is applicable, with $\epsilon_1$ replaced by $O(\epsilon_1 + \epsilon_2)$ and $w$ replaced by $x_k$. Lemma \ref{lemforfirstk} follows from Lemma \ref{hybridcomparison} on noting that 
$$
\big(1/2+O(\epsilon)\big)(w_i^2 + w_j^2) = \big(1+O(\sqrt{\epsilon k})\big)x_k^2,
$$ and using  \eqref{SizeCorrW} to get\begin{align*}
\sum_{k+1 \leq i < j \leq n} |\E W_i W_j| 
& \ll  \sum_{k+1 \leq i < j \leq n} \left(|r_{i,j}| + \sum_{l=1}^{k} |r_{i,l}| |r_{j,l}| + \epsilon \sum_{l=1}^{k} \sum_{s=1, s \neq l}^{k} |r_{i,l}| |r_{j,s}| \right)  \\
& \ll \sum_{k+1 \leq i < j \leq n} |r_{i,j}| + \sum_{l=1}^{k} \left(\sum_{i=k+1}^{n} |r_{i,l}| \right)^2+ \epsilon \left(\sum_{l=1}^{k} \sum_{i=k+1}^{n} |r_{i,l}| \right)^2   \\
& \ll  \sum_{k+1 \leq i < j \leq n} |r_{i,j}| + \sum_{l=1}^{k} \left(\sum_{i=k+1}^{n} |r_{i,l}| \right)^2 . 
\end{align*}
Here the final line uses the Cauchy--Schwarz inequality in the form $\left(\sum_{l=1}^{k} \sum_{i=k+1}^{n} |r_{i,l}| \right)^2 \leq k \sum_{l=1}^{k} \left(\sum_{i=k+1}^{n} |r_{i,l}| \right)^2$.
\end{proof}

\section{The full prime number race: Proof of Theorem \ref{FullRace}}\label{secFullRace}
Suppose $q$ is large, and let $Z=(Z_{j})_{1 \leq j \leq n}$ denote a multivariate normal random vector whose components have mean zero, variance one, and correlations
$$ r_{i, j}=\E Z_{i}Z_{j} = \frac{B_{q}(a_i,a_j)}{\var(q)} . $$
Let $S_1, S_2 \subseteq \{1, 2, \dots, n\}$ be non-empty. Then Correlation Estimate 1 implies that
\begin{equation}\label{AverageCorr}
\sum_{\substack{\ell\in S_1, s\in S_2\\l\neq s}}|r_{\ell,s}|\ll \frac{\sqrt{|S_1|\cdot |S_2|}\log^2 \big(2|S_1|\cdot |S_2|\big)}{\log q} , \;\;\; \text{and} \;\;\; \sum_{\substack{1 \leq \ell, s \leq n, \\ \ell \neq s}} |r_{\ell,s}| \ll \frac{n \log^{2}(2n)}{\log q} .
\end{equation}

Let $\mathcal{C}=(r_{i,j})_{1\leq i,j\leq n}$ be the covariance matrix of $Z_1, \dots, Z_n$. In view of the pointwise bound $|r_{i,j}| \ll 1/\log q$ (for $i \neq j$) from Lemma \ref{CovarianceEntries}, we may apply Lemma \ref{matrixlem} with $\epsilon \asymp 1/\log q$ provided that $n$ is at most a certain constant times $\log q$, and obtain that
\begin{equation}\label{determinant}
\det(\mathcal{C})=1+O\left(\frac{n(\log n)^2}{(\log q)^2}\right),
\end{equation}
and 
\begin{equation}\label{estimateR}
\widetilde{r}_{\ell,s} = \begin{cases}  1+ O\left(\frac{n(\log n)^2}{(\log q)^2}\right) & \text{ if } \ell=s\\
O \left(|r_{\ell,s}|+ \sum_{\substack{1\leq j\leq n\\ j\neq \ell, s}}|r_{\ell, j}||r_{s, j}|+\frac{n(\log n)^2}{(\log q)^3}\right) &  \text{ if } \ell\neq s ,
\end{cases} 
\end{equation}
where $\widetilde{r}_{i,j}$ denote the entries of the inverse matrix $\mathcal{C}^{-1}$.

Now the key ingredient in the proof of Theorem \ref{FullRace} is the following proposition, which gives an approximation for the joint density function of $Z_1, \dots, Z_n$. Let $f(x_1, \dots, x_n)$ be this density, and define the Euclidean norm $||x||:=(x_1^2+\cdots+ x_n^2)^{1/2}$.

\begin{pro}\label{Density}
Let $2 \leq n \ll \log q$ be an integer. For any $x=(x_1, \dots, x_n)\in \mathbb{R}^n$, we have
\begin{align*}
f(x_1, \dots, x_n)= &\left(1+O\left(\frac{n(\log n)^2}{(\log q)^2}\right)\right)\\
&\times \frac{1}{(2\pi)^{n/2}}\exp\left(-\frac{||x||^2}{2}\left(1+O\left(\frac{(\log n)^4}{\log q}+\frac{n(\log n)^6}{(\log q)^2}+ \frac{n^2(\log n)^2}{(\log q)^3}\right)\right)\right).
\end{align*}

\end{pro}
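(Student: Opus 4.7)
The plan is to work directly with the explicit Gaussian density formula
\[
f(x_1,\ldots,x_n) \;=\; \frac{1}{(2\pi)^{n/2}\sqrt{\det\mathcal{C}}}\,\exp\!\Bigl(-\tfrac{1}{2}\,x^{T}\mathcal{C}^{-1}x\Bigr),
\]
and to estimate the prefactor and the quadratic form in the exponent separately. The prefactor is handled immediately by \eqref{determinant}, which gives $1/\sqrt{\det\mathcal{C}}=1+O(n(\log n)^2/(\log q)^2)$ and accounts for the multiplicative factor outside the exponential in the claim.

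For the exponent I would split
\[
x^{T}\mathcal{C}^{-1}x \;=\; \sum_{i=1}^{n}\widetilde{r}_{i,i}\,x_i^{2} + \sum_{i\neq j}\widetilde{r}_{i,j}\,x_i x_j,
\]
and use the entry estimates from \eqref{estimateR}. The diagonal contribution is immediately $(1+O(n(\log n)^2/(\log q)^2))\,||x||^{2}$, so the real task is to bound the off-diagonal bilinear form by $||x||^{2}$ times a quantity of the claimed size. I would insert the second case of \eqref{estimateR} in absolute value and separate the sum into three pieces, one for each summand in the upper bound on $|\widetilde{r}_{i,j}|$. The piece involving $n(\log n)^2/(\log q)^3$ is handled by the pointwise inequality $|x_ix_j|\leq(x_i^2+x_j^2)/2$, summed over $i\neq j$, and yields the third claimed error. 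The piece involving $\sum_{k\neq i,j}|r_{i,k}||r_{j,k}|$ is reorganised by fixing $k$ first: Cauchy--Schwarz gives $|\sum_{i,j\neq k}|r_{i,k}||r_{j,k}|x_ix_j|\leq ||x||^{2}\sum_i r_{i,k}^{2}$ for each $k$, after which summing over $k$, using the pointwise bound $|r_{i,k}|\ll 1/\log q$ from Lemma \ref{CovarianceEntries} and the double sum form of Correlation Estimate 1, produces the middle claimed error.

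The main obstacle is the remaining piece, $\sum_{i\neq j}|r_{i,j}|\,x_i x_j$, where we have only a single covariance in each term and no easy way to gain from bilinearity. Since the estimate must hold for arbitrary $x$, I would bound this via $|x_ix_j|\leq(x_i^2+x_j^2)/2$, reducing to $||x||^{2}\,\max_i\sum_{j\neq i}|r_{i,j}|$, and then apply Correlation Estimate 1 with the first collection equal to the singleton $\{a_i\}$ and the second equal to $\{a_1,\ldots,a_n\}$ to bound this row sum uniformly in $i$. This row-sum input, ultimately fed by the harmonic analysis behind Correlation Estimate 1, is what pins down the first error term in the proposition. Combining the three off-diagonal contributions with the diagonal estimate, substituting back into the exponent, and re-exponentiating then yields the claimed formula for $f$.
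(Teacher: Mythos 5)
Your decomposition of the exponent into diagonal and three off-diagonal pieces is the same as the paper's, your handling of the prefactor and the third piece is correct, and your Cauchy--Schwarz treatment of the middle piece is actually slightly more efficient than the paper's (you get $n(\log n)^2/(\log q)^2$ where the paper gets $n(\log n)^6/(\log q)^2$, both acceptable). The problem is the first piece, which is the dominant one.

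Your plan bounds $\sum_{i\neq j}|r_{i,j}|\,|x_i x_j|$ by $|x_ix_j|\le(x_i^2+x_j^2)/2$, reducing to $\|x\|^2\max_i\sum_{j\neq i}|r_{i,j}|$, and then applies Correlation Estimate 1 with a singleton against the full set. But that row-sum application only gives $\sum_{j\neq i}|r_{i,j}|\ll \sqrt{n}(\log n)^2/\log q$, so your bound for the first piece is $\|x\|^2\sqrt{n}(\log n)^2/\log q$, not $\|x\|^2(\log n)^4/\log q$ as claimed in the Proposition. For $n$ near $\log q$ the gap is a factor of roughly $\sqrt{n}/(\log n)^2$, which is large, and feeding the weaker bound into Theorem \ref{FullRace} would cut the admissible range down to $n\ll(\log q)^{2/3}$ rather than $n\ll\log q/(\log\log q)^4$.

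The missing idea is that replacing $|x_ix_j|$ by $(x_i^2+x_j^2)/2$ discards exactly the structure that Correlation Estimate 1 is designed to exploit: the estimate saves a factor $\sqrt{rs}$ over the pointwise bound when applied to two collections of sizes $r$ and $s$. The paper instead partitions $\{1,\ldots,n\}$ dyadically by the size of $|x_\ell|$ into sets $S_j=\{\ell:\|x\|/2^j<|x_\ell|\le\|x\|/2^{j-1}\}$ (with a final catch-all block), observes the size constraint $|S_j|\le 2^{2j}$ forced by $\sum x_\ell^2=\|x\|^2$, and then applies Correlation Estimate 1 to each pair $(S_i,S_j)$ so that the $\sqrt{|S_i||S_j|}\le 2^{i+j}$ saving exactly cancels the weight $\|x\|^2/2^{i+j}$ from the sizes of $x_\ell,x_s$. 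Summing over the $O(\log n)$ dyadic scales then gives the $(\log n)^4$ factor. Without some such weighted decomposition that retains the two-sided structure of the bilinear form, the claimed first error term is out of reach.
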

\begin{proof}
By definition and by \eqref{determinant} we have
\begin{equation}\label{DefDensity}
\begin{aligned}f(x_1, \dots, x_n)&=\frac{1}{(2\pi)^{n/2}\sqrt{\text{det}(\mathcal{C})}}\exp\left(-\frac12 \mathbf{x}^T \mathcal{C}^{-1} \mathbf{x}\right)\\
&=  \left(1+O\left(\frac{n(\log n)^2}{(\log q)^2}\right)\right)\frac{1}{(2\pi)^{n/2}}\exp\left(-\frac12 \mathbf{x}^T \mathcal{C}^{-1} \mathbf{x}\right) ,
\end{aligned}
\end{equation}
and using \eqref{estimateR} we obtain 
\begin{equation}\label{estimatedensity}
\mathbf{x}^T \mathcal{C}^{-1} \mathbf{x}= \sum_{1\leq \ell, s\leq n} x_{\ell}x_s \widetilde{r}_{\ell,s}= ||x||^2 \left(1+O\left(\frac{n(\log n)^2}{(\log q)^2}\right)\right)+ \sum_{1\leq \ell\neq s\leq n} x_{\ell}x_s \widetilde{r}_{\ell,s} ,
\end{equation}
and also that
\begin{equation}\label{BoundOFF}
\sum_{1\leq \ell\neq s\leq n} x_{\ell}x_s \widetilde{r}_{\ell,s}\ll \sum_{1\leq \ell\neq s\leq n} |x_{\ell}x_sr_{\ell,s}| + \sum_{1\leq \ell\neq s\leq n} |x_{\ell}x_s|\sum_{\substack{1\leq j\leq n\\ j\neq \ell, s}}|r_{\ell, j}||r_{s, j}|+ \frac{n(\log n)^2}{(\log q)^3}\sum_{1\leq \ell\neq s\leq n} |x_{\ell}x_s|.
\end{equation}

Now let $J := \lfloor 2\log n \rfloor$, and for each $1\leq j \leq J-1$ define $S_j$ to be the subset of $\{1,\dots, n\}$ consisting of those $\ell$ for which $||x||/2^j < |x_{\ell}|\leq ||x||/2^{j-1}$. Also let $S_J$ be the set of those $\ell$ for which $|x_{\ell}|\leq  ||x||/2^{J-1}$. Then note that for all $1\leq j\leq  J$ we have $|S_j|\leq 2^{2j}$, since $|S_J|\leq n\leq 2^{2J}$ and if $j<J$, then
$ \frac{||x||^2}{2^{2j}}|S_j| \leq x_1^2+ \cdots x_n^2=||x||^2$.  Using \eqref{AverageCorr} we deduce that
\begin{align*}
\sum_{1\leq \ell\neq s\leq n} |x_{\ell}x_sr_{\ell,s}| = \sum_{1\leq i, j\leq J} \sum_{\substack{\ell\in S_i, s\in S_j\\ \ell\neq s}}|x_{\ell}x_sr_{\ell,s}| & \ll ||x||^2\sum_{1\leq i, j\leq J} \frac{1}{2^{i+j}} \sum_{\substack{\ell\in S_i, s\in S_j\\ \ell\neq s}}|r_{\ell,s}|
\\
&\ll \frac{(\log n)^4}{\log q}||x||^2.
\end{align*}
Similarly we have
\begin{eqnarray}
\sum_{1\leq \ell\neq s\leq n} |x_{\ell}x_s|\sum_{\substack{1\leq j\leq n\\ j\neq \ell, s}}|r_{\ell, j}||r_{s, j}| \leq \sum_{1 \leq j \leq n} \left(\sum_{\substack{1\leq \ell \leq n, \\ \ell \neq j}} |x_{\ell}| |r_{\ell, j}| \right)^{2} & \ll & \sum_{1 \leq j \leq n} \left(\sum_{1 \leq i \leq J} \frac{||x||}{2^{i}} \sum_{\substack{\ell \in S_i, \\ \ell \neq j}} |r_{\ell, j}| \right)^{2} \nonumber \\
& \ll & \frac{n(\log n)^6}{(\log q)^2}||x||^2 , \nonumber
\end{eqnarray}
since \eqref{AverageCorr} implies that $\sum_{\substack{\ell \in S_i, \\ \ell \neq j}} |r_{\ell, j}| \ll (\sqrt{|S_i|} \log^{2}(2|S_i|))/\log q \ll 2^{i} i^{2}/\log q$.
Finally, by the Cauchy--Schwarz inequality we have $\sum_{1\leq \ell\neq s\leq n} |x_{\ell}x_s|\leq n||x||^2$, and hence the contribution of the third term in the right hand side of \eqref{BoundOFF} is $\ll n^2(\log n)^2 ||x||^{2}/(\log q)^3$. Collecting the above estimates completes the proof.
\end{proof}

\begin{proof}[Proof of Theorem \ref{FullRace}]
By Proposition \ref{Density}, if $2\leq n \leq \log q/(\log\log q)^4$ then
\begin{align*}
&\pr(Z_1>Z_2>\cdots> Z_n) = \int_{x_1>\cdots>x_n}f(x_1,  \dots, x_n)dx_1\cdots dx_n\\
&= \left(1+O\left(\frac{n(\log n)^2}{(\log q)^2}\right)\right)\frac{1}{(2\pi)^{n/2}}\int_{x_1>\cdots>x_n}\exp\left(-\frac{||x||^2}{2}\left(1+O\left(\frac{(\log n)^4}{\log q}\right)\right)\right)dx_1\cdots dx_n\\
&= \left(1+O\left(\frac{n(\log n)^2}{(\log q)^2}\right)\right)\frac{1}{n!}\left(\frac{1}{\sqrt{2\pi}} \int_{-\infty}^{\infty} \exp\left(-\frac{t^2}{2}\left(1+O\left(\frac{(\log n)^4}{\log q}\right)\right)\right)dt\right)^n \\
&= \left(1+O\left(\frac{n(\log n)^4}{\log q}\right)\right)\frac{1}{n!},
\end{align*}
since the integrand is symmetric in $x_1, \dots, x_n$.

Combining this with Normal Approximation Result 1, we obtain
$$ \pr(X(q,a_1) > X(q,a_2) > \cdots > X(q,a_n)) = \left(1+O\left(\frac{n(\log n)^4}{\log q}\right)\right)\frac{1}{n!} + O\left(\frac{n^{3}}{\varphi(q)^{1/8}} \right) . $$
If $n \leq \log q/(\log\log q)^4$ then $1/n! \geq 1/n^{n} = 1/q^{o(1)}$, so the second ``big Oh'' term may be absorbed into the first, and Theorem \ref{FullRace} follows.
\end{proof}

\section{The leader: Proof of Theorem \ref{Leader}}\label{secLeader}
We may assume that $q$ is sufficiently large, otherwise the theorem is trivial. Then we may assume that $n \geq \log^{0.1}q$, say, otherwise the theorem follows from Theorem \ref{FullRace}. Let $Z=(Z_{j})_{1 \leq j \leq n}$ denote a multivariate normal random vector whose components have mean zero, variance one, and correlations
$ r_{i, j}=\E Z_{i}Z_{j} = B_{q}(a_i,a_j)/\var(q). $

In view of Theorem \ref{probleader} and Correlation Estimate 1, and our assumption that $n \geq \log^{0.1}q$,
\begin{eqnarray}
|\p(Z_{1} > \max_{2 \leq i \leq n} Z_{i}) - \frac{1}{n}| & \ll & n^{-100} + n^{-1.99}\sum_{2 \leq i \leq n} |r_{1,i}| + n^{-2.99} \sum_{2 \leq i < j \leq n} |r_{i,j}| \nonumber \\
& \ll & n^{-100} + \frac{\sqrt{n} \log^{2}n}{n^{1.99} \log q} + \frac{n \log^{2}n}{n^{2.99} \log q} \nonumber \\
& \ll & \frac{\log^{2}n}{n^{1.49} \log q} = \frac{1}{n} \cdot \frac{\log^{2}n}{n^{0.49} \log q}  . \nonumber
\end{eqnarray}
Theorem \ref{Leader} follows by combining this estimate with Normal Approximation Result 1.

\section{Ordering the first $k$ contestants: Proof of Theorem \ref{OrderFirstk1} }\label{secorderfirstk}

Throughout this section we let $Z_{1}, ..., Z_{n}$ denote mean zero, variance one, jointly normal random variables corresponding to a prime number race modulo $q$ (i.e. with off-diagonal correlations $r_{i,j} = \E Z_{i}Z_{j} = B_{q}(a_{i},a_{j})/\text{Var}(q)$), where $q$ is large.

Our key tool in proving Theorem \ref{OrderFirstk1} will be the version of Lemma \ref{lemforfirstk} that arises from specializing to the prime number race situation. We record this now, together with an estimate for the number of large values of the $|r_{i,j}|$ that we shall need when deducing it.

\begin{lem}\label{DistribCorr}
For any $k+1 \leq i \leq n$ and any $j \geq 1$ we have
$$ \#\left\{1 \leq l \leq k: |r_{i,l}| \geq \frac{1}{2^{j} \log q}\right\} \ll 2^{2j} j^{4}. $$
\end{lem}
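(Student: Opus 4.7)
The plan is to apply Correlation Estimate 1 with $a_i$ playing the role of the single residue $a_1$ there and with the $b_k$'s taken to be the residues $\{a_l : l \in T_j\}$, where $T_j := \{1 \leq l \leq k : |r_{i,l}| \geq 1/(2^j \log q)\}$ is the set whose cardinality $N_j$ I want to bound. Since $i \geq k+1$ and the residues $a_1, \ldots, a_n$ are distinct, $a_i$ differs from every $a_l$ with $l \in T_j$, so this application is legitimate.

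First I would record the trivial lower bound coming from the very definition of $T_j$, namely
$$ \sum_{l \in T_j} |r_{i,l}| \geq \frac{N_j}{2^j \log q}, $$
and then pair it against the upper bound supplied by Correlation Estimate 1,
$$ \sum_{l \in T_j} |r_{i,l}| \ll \frac{\sqrt{N_j}\,\log^2(2N_j)}{\log q}. $$
Combining these, the factors of $\log q$ cancel and I am left with the self-referential inequality
$$ N_j \ll 2^{2j}\,\log^4(2N_j). $$

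Finally I would unravel this into the claimed bound $N_j \ll 2^{2j} j^4$. Writing $x := \log(2 N_j)$, the displayed inequality becomes $x \leq A + 4 \log x$ for some $A = O(j + 1)$, and bootstrapping once (using $4 \log x \leq x/2$ for $x$ larger than an absolute constant) yields $x \ll j$ for $j \geq 1$; substituting back gives $\log^4(2 N_j) \ll j^4$, and hence $N_j \ll 2^{2j} j^4$. I do not expect any serious obstacle here — the square-root saving in Correlation Estimate 1 already does all the real work, and the bootstrap step, while mildly self-referential, closes cleanly.
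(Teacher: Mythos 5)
Your proof is correct and follows exactly the same route as the paper: sandwich $\sum_{l \in T_j} |r_{i,l}|$ between the trivial lower bound $N_j/(2^j \log q)$ and the Correlation Estimate 1 upper bound $\ll \sqrt{N_j}\,\log^2(2N_j)/\log q$, then rearrange. The paper simply says ``the lemma follows by rearranging'' where you spell out the bootstrap from $N_j \ll 2^{2j}\log^4(2N_j)$ to $N_j \ll 2^{2j} j^4$, which is a helpful clarification but not a different argument.
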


\begin{proof}
Let $\mathcal{S}_{j}$ denote the set of $1 \leq l \leq k$ for which $|r_{i,l}| \geq \frac{1}{2^{j} \log q}$. Using Correlation Estimate 1, we have
$$ \sum_{l \in \mathcal{S}_{j}} |r_{i,l}| \ll \frac{\sqrt{\#\mathcal{S}_{j}} \log^{2}(2\#\mathcal{S}_{j})}{\log q}. $$
On the other hand, by definition of $\mathcal{S}_{j}$ the left hand side is $\geq \frac{\#\mathcal{S}_{j}}{2^{j} \log q}$, and so the lemma follows by rearranging.
\end{proof}

\begin{lem}\label{lemfirstkPNR}
Let $k$ be a positive integer such that $k/\log q$ is small enough, and suppose $n \geq k+2$ is large. Let $1\leq A\leq 2\sqrt{\log n}$ be real. If $x= (x_{1}, ..., x_k)$ is such that $x_{1}, ..., x_{k-1} \geq x_{k} \geq A$ and $||x|| \leq 10\sqrt{\log q}$, then we have
\begin{align*}
 &\p(\max_{k+1 \leq i \leq n} Z_{i} \leq x_k | Z_{1}=x_1 , ..., Z_{k} = x_k)
= \prod_{i=k+1}^{n} \Phi\left(w_i \right)\\ 
&+ O\left(e^{-A^2 \Big(1+O\big(\sqrt{k/\log q}\big)\Big)}\frac{n(\log n)^4}{\log q} \left(\prod_{i=k+1}^{n} \Phi\left((1+O(1/\log q))w_i \right) + e^{-\Theta(A^2 (\log q)/(\log\log q)^{9})} \right) \right) , 
\end{align*}
for certain numbers $w_i$ that satisfy
$$w_i= \left(1+O\left(\frac{k}{(\log q)^2}\right)\right)x_k +O\left(\sum_{l=1}^{k} x_l|r_{i,l}| \right).$$
\end{lem}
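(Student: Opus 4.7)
The plan is to specialize Lemma \ref{lemforfirstk} to the prime number race setting, reading off the three abstract parameters $\epsilon, \epsilon_1, \epsilon_2$ from the covariance estimates for $r_{i,j} = B_q(a_i,a_j)/\var(q)$, and then simplify the resulting bound using Correlation Estimate 1 together with the pointwise covariance estimates.

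First, from \eqref{BoundCov} and Lemma \ref{VarianceZeroes} we have $|r_{i,j}| \ll 1/\log q$ whenever $i \neq j$, so I would take $\epsilon = C/\log q$ for an absolute constant $C$. The refined bound \eqref{PosImpliesSmall} shows that when $r_{i,j} \geq 0$ we in fact have $r_{i,j} \ll 1/\varphi(q)$, so I may take $\epsilon_1 = O(1/\varphi(q))$, which will be completely negligible compared with $\epsilon^3 = 1/(\log q)^3$. This reflects the useful fact that positive correlations in the PNR are exceptionally small.

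The main technical step is to verify the hypothesis on $\epsilon_2$. For each ordered pair of distinct $k+1 \leq i, j \leq n$, I would split the defining triple sum into three pieces $T_1 + T_2 + T_3$ according to the three inner summands. For $T_1$, using the pointwise bound $|r_{l,s}| \ll 1/\log q$ to separate the variables, and Correlation Estimate 1 with $r=1$, $s=k$ to each inner sum, gives
$$ T_1 \ll \frac{1}{\log q}\Bigl(\sum_{l \leq k}|r_{i,l}|\Bigr)\Bigl(\sum_{s \leq k}|r_{j,s}|\Bigr) \ll \frac{k(\log k)^4}{(\log q)^3}. $$
Analogous arguments, in which one swaps orders of summation and pulls out pointwise bounds where needed, give $T_2 \ll k^2(\log k)^4/(\log q)^4$ and $T_3 \ll k^2(\log k)^6/(\log q)^5$. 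Since $k/\log q$ is assumed small, $T_1$ dominates, so I may take $\epsilon_2 \ll k(\log k)^4/(\log q)^3$.

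Next I would bound the outer factor from Lemma \ref{lemforfirstk}. Correlation Estimate 1 gives $\sum_{k+1 \leq i < j \leq n}|r_{i,j}| \ll n(\log n)^2/\log q$ directly, and for each fixed $l \leq k$ the bound $\sum_{i=k+1}^{n}|r_{i,l}| \ll \sqrt{n}(\log n)^2/\log q$ yields $\sum_{l=1}^{k}\bigl(\sum_{i>k}|r_{i,l}|\bigr)^2 \ll kn(\log n)^4/(\log q)^2$. Since $k \ll \log q$, both contributions are $\ll n(\log n)^4/\log q$, matching the outer factor in the statement.

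Finally, I would substitute back into Lemma \ref{lemforfirstk} and clean up. The prefactor $e^{-(1+O(\sqrt{\epsilon k}))x_k^2}$ becomes $e^{-A^2(1+O(\sqrt{k/\log q}))}$ on using $x_k \geq A$ and the smallness of $\sqrt{\epsilon k}=\sqrt{k/\log q}$. The quotient $(\epsilon x_k)^2/(\epsilon_1+\epsilon_2+\epsilon^3)$ is at least a constant times $A^2(\log q)/(\log\log q)^9$ in the range of $k$ relevant to the later application, the denominator $(\log\log q)^9$ absorbing $\epsilon_2$ comfortably. The formula $w_i = (1+O(k/(\log q)^2))x_k + O(\sum_l x_l |r_{i,l}|)$ is immediate from $\epsilon^2 k = k/(\log q)^2$, and the second form $w_i = (1+O(\sqrt{k/\log q}))x_k$ follows exactly as in the proof of Lemma \ref{lemforfirstk}, via Cauchy--Schwarz applied to $\sum_l x_l|r_{i,l}|$ together with the pointwise covariance bound. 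The principal obstacle throughout is that the pointwise bound $|r_{i,j}| \ll 1/\log q$ is far too weak to close the calculation; the square-root savings available in Correlation Estimate 1 must be exploited in every relevant sum to drive $\epsilon_2$ and the outer factor down to the sizes required.
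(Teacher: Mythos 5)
Your overall strategy — specialize Lemma \ref{lemforfirstk} by reading off $\epsilon$, $\epsilon_1$, $\epsilon_2$ from the covariance estimates — is exactly the paper's. The choices $\epsilon \asymp 1/\log q$ and $\epsilon_1 \asymp 1/\varphi(q)$ are correct and match the paper. The simplification of the outer factor to $n(\log n)^4/\log q$ is also right, as is the final shape of $w_i$.

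However, your estimate of $\epsilon_2$ has a genuine gap. For the leading term $T_1$, you pull out the pointwise bound $|r_{l,s}| \ll 1/\log q$ and then bound $\sum_{l}|r_{i,l}|$ and $\sum_s |r_{j,s}|$ separately by Correlation Estimate 1, obtaining $T_1 \ll k(\log k)^4/(\log q)^3$ and hence $\epsilon_2 \ll k(\log k)^4/(\log q)^3$. That is not strong enough to produce the claimed error factor $e^{-\Theta(A^2(\log q)/(\log\log q)^9)}$: plugging into $(\epsilon x_k)^2/(\epsilon_1+\epsilon_2+\epsilon^3)$ gives $\gg A^2(\log q)/(k(\log k)^4)$, which for $k$ near the upper end of the stated range (only $k/\log q$ small) is far weaker than $A^2(\log q)/(\log\log q)^9$. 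You acknowledge this yourself by retreating to ``the range of $k$ relevant to the later application,'' but even under the hypothesis $k(\log k)^{10} \leq (\log q)/\log n$ from Theorem \ref{OrderFirstk1}, $k$ can be as large as roughly $\log q/(\log\log q)^{10}$, in which case $k(\log k)^4 \approx \log q/(\log\log q)^6$, still vastly exceeding $(\log\log q)^9$. So the lemma would not be recovered even in the restricted range.

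What is missing is the paper's Lemma \ref{DistribCorr}, which bounds $\#\{l \leq k : |r_{i,l}| \geq 1/(2^j\log q)\} \ll 2^{2j}j^4$. The paper decomposes both $\sum_l$ and $\sum_s$ dyadically according to the sizes of $|r_{i,l}|$ and $|r_{j,s}|$, and applies Correlation Estimate 1 to the triple correlation $|r_{l,s}|$ \emph{block by block}, so that the $\sqrt{\#S_a\#T_b}$ saving cancels the dyadic weights. This yields $\sum_{l,s}|r_{i,l}||r_{j,s}||r_{l,s}| \ll (\log k)^8/(\log q)^3$ — no factor of $k$ at all. Your factorization ``$|r_{l,s}|$ pointwise $\times$ $\sum_l|r_{i,l}| \times \sum_s|r_{j,s}|$'' forgoes the averaging over the pair $(l,s)$ and is irreparably weaker. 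To fix the proof you would need to establish and use the distributional estimate of Lemma \ref{DistribCorr} together with the dyadic block argument; the rest of your argument would then go through.
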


\begin{proof}
We want to apply Lemma \ref{lemforfirstk}, and Lemma \ref{CovarianceEntries} shows that we may do so with $\epsilon \asymp 1/\log q$. We also have $B_{q}(a,b) \leq C\log q$ (when $a \neq b$) by \eqref{PosImpliesSmall}, and therefore we may take $\epsilon_{1} \asymp 1/\varphi(q)$, which is very small. It is more difficult to determine a permissible value of $\epsilon_{2}$, but we may do so using Lemma \ref{DistribCorr} together with Correlation Estimate 1. Indeed, we can write $\sum_{l=1}^{k} = \sum_{a \leq 2\log k} \sum_{l \in S_a}$, where $S_a := \{1 \leq l \leq k : \frac{1}{2^a \log q} < |r_{i,l}| \leq \frac{1}{2^{a-1}\log q}\}$, and write $\sum_{s=1}^{k} = \sum_{b \leq 2\log k} \sum_{s \in T_b}$, where $T_b := \{1 \leq s \leq k : \frac{1}{2^b \log q} < |r_{j,s}| \leq \frac{1}{2^{b-1}\log q}\}$ (with suitable adjustments to the definitions of $S_1, S_{\lfloor 2\log k \rfloor}, T_1, T_{\lfloor 2\log k \rfloor}$ to ensure that all indices $l,s$ are included). Then Lemma \ref{DistribCorr} implies that $\#S_a \ll 2^{2a} a^4$ and $\#T_b \ll 2^{2b} b^4$, and using Correlation Estimate 1 to bound all the sums of $|r_{l,s}|$ over $l \in S_a$ and $s \in T_b$, we obtain that
$$ \sum_{l=1}^{k} \sum_{s=1, s \neq l}^{k} |r_{i,l}| |r_{j,s}| |r_{l,s}| \ll \log^{2}(2k) \frac{\log^{4}(2k)}{\log^{2}q} \frac{\log^{2}(2k)}{\log q} = \frac{\log^{8}(2k)}{\log^{3}q} \ll \frac{(\log\log q)^{8}}{\log^{3}q} . $$
By dividing up the sum over $t$ according to the size of $|r_{l,t}|$, one can similarly show that
$$ \sum_{t = 1, t \neq l,s}^{k} |r_{l,t}||r_{s,t}| \ll \log(2k) \frac{\log^{2}(2k)}{\log q} \frac{\log^{2}(2k)}{\log q} \ll \frac{(\log\log q)^{5}}{\log^{2}q} . $$
Putting these estimates together with the standard bounds $ \sum_{l=1}^{k} |r_{i,l}| \ll \frac{\sqrt{k} \log^{2}(2k)}{\log q}$ and $\sum_{\substack{1 \leq l,s \leq k, \\ l \neq s}} |r_{l,s}| \ll \frac{k \log^{2}(2k)}{\log q}$, (coming from Correlation Estimate 1), 
one checks that it is permissible to take
$$ \epsilon_{2} \asymp \frac{(\log\log q)^{9}}{\log^{3} q} , $$
whereupon Lemma \ref{lemforfirstk} implies that
\begin{align*}
 & \p(\max_{k+1 \leq i \leq n} Z_{i} \leq x_k | Z_{1}=x_1 , ..., Z_{k} = x_k) - \prod_{i=k+1}^{n} \Phi\left(w_i \right) \nonumber \\
& \ll  e^{-\left(1+O\left(\sqrt{k/\log q}\right)\right)x_k^2}\left(\prod_{i=k+1}^{n} \Phi\left( \tilde{w_i} \right) + e^{-\Theta\left(A^2 (\log q)/(\log\log q)^{9}\right)} \right) \left( \sum_{k+1 \leq i < j \leq n} |r_{i,j}| + \sum_{l=1}^{k} \left(\sum_{i=k+1}^{n} |r_{i,l}| \right)^2 \right),
\end{align*}
where $\tilde{w_i}=(1+O(1/\log q))w_i.$

Finally, the stated form of the result follows by using Correlation Estimate 1 again to show that the second bracket on the right hand side is
$$ \ll \frac{n \log^{2}n}{\log q} + \frac{nk \log^{4}n}{\log^{2}q} \ll \frac{n \log^{4}n}{\log q}.$$

\end{proof}

\vspace{12pt}
We have now collected all the necessary ingredients for the proof of Theorem \ref{OrderFirstk1}.

\begin{proof}[Proof of Theorem \ref{OrderFirstk1}]
Let $c > 0$ be a small constant to be fixed later, and suppose first that $c n/\log n < k \leq n$. Then our condition $k \log^{10}k \leq (\log q)/\log n$ implies that $n \ll_{c} (\log q)/(\log\log q)^{10}$, in which case Theorem \ref{FullRace} is applicable and implies that
$$ \delta_k(q;a_1, \dots, a_n)= \frac{(n-k)!}{n!}\left(1+O\left(\frac{n(\log n)^4}{\log q} \right)\right) = \frac{(n-k)!}{n!}\left(1+O\left(\frac{k(\log k)^4 \log n}{\log q} \right)\right) . $$
This result is already acceptable for Theorem \ref{OrderFirstk1}, so we may assume henceforth that we are in the other case where $2 \leq k \leq c n/\log n$. We may also assume throughout that $n \geq \log^{0.1}q$, because otherwise the result again follows from Theorem \ref{FullRace} regardless of the value of $k$. Since we assume that $q$ is large, we may assume in particular that $n$ is large.

First, it follows from Normal Approximation Result 1 together with the discussion in section \ref{pnrsetup} that 
$$
 \big |\delta_k(q; a_1,\dots,a_n)- \p(Z_{1} > Z_{2} > ... > Z_{k} > \max_{k+1 \leq i \leq n} Z_{i})\big| \ll \frac{n^{3}}{\varphi(q)^{1/8}}.
$$
This error is completely negligible for Theorem \ref{OrderFirstk1}, since we assume $k \log^{10}k \leq (\log q)/\log n$ and $(\log q)/\log n$ is large.

Let $1\leq A\leq 2\sqrt{\log n}$ be a real parameter to be chosen later. Then we have
\begin{eqnarray}
&& \p(Z_{1} > Z_{2} > ... > Z_{k} > \max_{k+1 \leq i \leq n} Z_{i}) \nonumber \\
& = & \p(Z_{1} > ... > Z_{k} > \max_{k+1 \leq i \leq n} Z_{i}, \; \textrm{and} \; Z_{k} > A) + O\left(\p\left(\max_{k+1 \leq i \leq n} Z_{i} \leq A\right)\right) , \nonumber
\nonumber
\end{eqnarray}
and using  Normal Comparison Result 2 with $\epsilon \asymp 1/\log q$ we get
$$
 \p\left(\max_{k+1 \leq i \leq n} Z_{i} \leq A \right) \ll \exp\left\{- \Theta\left(n \frac{e^{-A^{2}/2 + O(A^2/\log q+AB+B^2)}}{A+B} \right) \right\} + e^{-\Theta(B^2\log q)},
$$
for any $B > 0$. Simply choosing $B=1$, and using that $1 \leq A \leq 2\sqrt{\log n} \leq 2\sqrt{\log q}$ and that $k \log^{10}k \leq (\log q)/\log n$, we deduce 
\begin{equation}\label{eq7.1}
\p\left(\max_{k+1 \leq i \leq n} Z_{i} \leq A \right) \ll \exp\left\{- \Theta\left(n \frac{e^{-A^{2}/2 + O(A)}}{A} \right) \right\} + n^{-2k}.
\end{equation}
Ultimately we will choose $A$ so this whole thing is $\ll n^{-2k} \ll \frac{(n-k)!}{n!} \frac{1}{n \log^{0.1}q}$, which will be an acceptable error term.

Next, we have
\begin{equation}\label{ConditionalProbasEstimates}
\begin{aligned}
& \p(Z_1>Z_2> ... > Z_k>\max_{k+1\leq i \leq n} Z_i, \; \textrm{and} \; Z_{k} > A) \\
= & \int_{x_1>...>x_k > A} f(x_1,..., x_k) \cdot \p\Big(\max_{k+1\leq i \leq n} Z_i <x_k\big| Z_1=x_1, ..., Z_k=x_k\Big) dx_1\cdots dx_k \\
= & \int_{\substack{x_1>...>x_k>A, \\ ||x|| \leq 3\sqrt{k\log n}}} f(x_1, ..., x_k) \cdot \p\Big(\max_{k+1\leq i \leq n} Z_i <x_k\big| Z_1=x_1, ..., Z_k=x_k\Big)dx_1\cdots dx_k + O(n^{-2k}),
\end{aligned}
\end{equation}
where as before $f$ denotes the joint density of $Z_1 , ..., Z_k$, and where the second equality follows because by Proposition \ref{Density} (and the fact that $k \ll \log q$) we always have $ f(x_1 , ..., x_k) \ll \frac{1}{(2\pi)^{k/2}} \exp\left(-\frac{||x||^2}{4}\right)$. Moreover, we note that when $||x|| \leq 3\sqrt{k\log n}$, Proposition \ref{Density} implies
\begin{equation}\label{asympdensityk}
 f(x_1 , ..., x_k) = \left(1 + O\left(k \log^4k \frac{\log n}{\log q}\right)\right) \frac{1}{(2\pi)^{k/2}} \exp\left(-\frac{||x||^2}{2}\right). 
 \end{equation}

Now let us note that  $ \Phi(y\pm\epsilon)= \big(1+O\big( \epsilon e^{-(y+O(\epsilon))^2/2}\big)\big) \Phi(y),$ for any $y, \epsilon>0$. In view of this, and the crude bound $\sum_{l=1}^{k} x_l |r_{i,l}| \ll (1/\log q) \sqrt{k} \sqrt{\sum_{l=1}^{k} x_l^2} \ll \sqrt{k/\log q}$, the main term $\prod_{i=k+1}^{n} \Phi\left(w_i \right)$ in our Lemma \ref{lemfirstkPNR} estimate for $\p\Big(\max_{k+1\leq i \leq n} Z_i <x_k\big| Z_1=x_1, ..., Z_k=x_k\Big)$  equals
\begin{equation}\label{eq7.4}
\begin{aligned}
& \quad \prod_{i=k+1}^{n} \Phi\left((1+O(1/\log q ))x_k + O\Big(\sum_{l=1}^{k} x_l |r_{i,l}| \Big) \right) \\
&=  \prod_{i=k+1}^{n} \Phi\Big(\big(1+O(1/\log q)\big)x_k \Big) \prod_{\substack{i=k+1, \\ \sum_{l=1}^{k} x_l |r_{i,l}| \geq \frac{x_k}{\log q}}}^{n} \left(1 + O\left(e^{-\big(1/2+O(\sqrt{k/\log q})\big)x_k^2} \sum_{l=1}^{k} x_l |r_{i,l}|\right) \right) \\
& =  \Phi\Big(\big(1+O(1/\log q)\big)x_k \Big)^{n-k} \exp\left\{ O\Bigg(e^{-\frac{A^2}{2}\Big(1+O\big(\sqrt{k/\log q}\big)\Big)} \sum_{\substack{i=k+1, \\ \sum_{l=1}^{k} x_l |r_{i,l}| \geq \frac{x_k}{\log q}}}^{n}  \sum_{l=1}^{k} x_l |r_{i,l}| \Bigg) \right\}.
\end{aligned}
\end{equation}
Now for any given $x_1 > x_2 > ... > x_k > A$ satisfying $||x|| \leq 3\sqrt{k\log n}$, and any non-empty subset $\mathcal{S} \subseteq \{k+1,k+2,...,n\}$, we can divide the  sum $\sum_{l=1}^{k} x_l |r_{i,l}|$ into $O(\log(2k))$ pieces depending on the size of $x_l$ (on dyadic ranges), and apply Correlation Estimate 1 (similarly as in the proof of Proposition 5.1) to deduce that
$$ \sum_{i \in \mathcal{S}} \sum_{l=1}^{k} x_l |r_{i,l}| \ll \log(2k) ||x|| \sqrt{\#\mathcal{S}} \frac{\log^{2}(2k\#\mathcal{S})}{\log q} \ll \sqrt{(\#\mathcal{S}) k \log n} \frac{\log^{3}(2k\#\mathcal{S})}{\log q} . $$
But if $\mathcal{S} := \{k+1 \leq i \leq n : \sum_{l=1}^{k} x_l |r_{i,l}| \geq \frac{x_k}{\log q} \}$ then the left hand side must also be $\geq \#\mathcal{S} \frac{x_k}{\log q} \gg \#\mathcal{S} \frac{1}{\log q}$, so we deduce that 
$\#\mathcal{S} \ll k (\log n)\log^{6}\big(2k\log n\big)\ll k (\log n)(\log\log q)^6$. Substituting back this size estimate for $\mathcal{S}$ implies that
$$ \sum_{\substack{i=k+1, \\ \sum_{l=1}^{k} x_l |r_{i,l}| \geq \frac{x_k}{\log q}}}^{n} \sum_{l=1}^{k} x_l |r_{i,l}| \ll  \frac{k  (\log n)(\log\log q)^6}{\log q}. $$
Collecting the above estimates shows that
$$ \prod_{i=k+1}^{n} \Phi\left(w_i \right) = \Phi\left((1+O(1/\log q))x_k \right)^{n-k} \exp\left\{ O\Bigg(e^{-\frac{A^2}{2}\Big(1+O\big(\sqrt{k/\log q}\big)\Big)} \frac{k  (\log n)(\log\log q)^6}{\log q} \Bigg) \right\}, $$
and clearly we have the same estimate for $\prod_{i=k+1}^{n} \Phi\left((1+O(1/\log q))w_i \right)$, which appears in the error term of Lemma \ref{lemfirstkPNR}. 
 

At this point we choose $A$ such that 
\begin{equation}\label{ChoiceA}
e^{0.51 A^{2}} = \frac{n}{k \log n} .
\end{equation}
Note that since we assume that $2 \leq k \leq cn/\log n$, this choice of $A$ will satisfy $1 \leq A \leq 2\sqrt{\log n}$ provided $c$ is fixed small enough. Notice also that if $n \geq \log^2 q$, then $k\log n \ll \log q \leq \sqrt{n}$ and so $A \gg \sqrt{\log n}$. Substituting our choice of $A$ back into \eqref{eq7.1} yields
\begin{equation}\label{eq7.6}
\p\left(\max_{k+1 \leq i \leq n} Z_{i} \leq A \right) \ll  \exp\left\{- \Theta\left(\frac{e^{0.01A^2 + O(A)}}{A} k\log n\right) \right\} + n^{-2k} \ll n^{-2k} ,
\end{equation}
provided $c$ was chosen small so $A$ is large enough. With this choice of $A$ we also have
$$ e^{-A^2 \Big(1+O\big(\sqrt{k/\log q}\big)\Big)}\frac{n(\log n)^4}{\log q} = e^{-A^2 \Big(0.49+O\big(\sqrt{k/\log q}\big)\Big)}\frac{k(\log n)^5}{\log q} \ll \frac{k(\log k)^4 \log n}{\log q} , $$
and similarly (distinguishing cases according as $k \geq \sqrt{n} \geq \log^{0.05}q$ or not) that
$$ e^{-\frac{A^2}{2}\Big(1+O\big(\sqrt{k/\log q}\big)\Big)} \frac{k  (\log n)(\log\log q)^6}{\log q} \ll \frac{k  (\log n) (\log k)^6}{\log q} \ll 1 . $$
Substituting into Lemma \ref{lemfirstkPNR}, and using our previous computations of $\prod_{i=k+1}^{n} \Phi\left(w_i \right)$ and our assumption that $k \ll (\log q)/(\log\log q)^{10}$, yields that
\begin{equation}\label{approxprobas}
\p(\max_{k+1 \leq i \leq n} Z_{i} \leq x_k | Z_{1}=x_1 , ..., Z_{k} = x_k)=  \Phi\big((1+O(\frac{1}{\log q}))x_k \big)^{n-k} \left(1+O\left(\frac{k \log n \log^{6}k}{\log q}\right) \right) + O(n^{-2k}) .
\end{equation}

Now, in view of the estimates \eqref{asympdensityk} and \eqref{approxprobas}, the main term on the right hand side of \eqref{ConditionalProbasEstimates} equals
\begin{eqnarray}
&& \int_{\substack{x_1>\cdots>x_k>A, \\ ||x|| \leq 3\sqrt{k\log n}}} \frac{1}{(2\pi)^{k/2}} e^{-||x||^{2}/2} \Phi\left((1+O(1/\log q))x_k \right)^{n-k} dx_1 ... dx_k \nonumber \\
& = & \int_{x_1>\cdots>x_k>A} \frac{1}{(2\pi)^{k/2}} e^{-||x||^{2}/2} \Phi\left((1+O(1/\log q))x_k \right)^{n-k} dx_1 ... dx_k + O(n^{-2k}) . \nonumber
\end{eqnarray}
Thus Theorem \ref{OrderFirstk1} will certainly follow if we show that
\begin{equation}\label{MainIntProb} 
\int_{x_1>\cdots>x_k>A} \frac{1}{(2\pi)^{k/2}} e^{-||x||^{2}/2} \Phi\left((1+O(1/\log q))x_k \right)^{n-k} dx_1 ... dx_k = \left(1+O\left(\frac{k\log n}{\log q}\right)\right)\frac{(n-k)!}{n!}.
\end{equation}
We shall only prove an upper bound, since an exactly similar argument would give a matching lower bound. Indeed, for a certain absolute constant $C > 0$ the integral is
\begin{eqnarray}
& \leq & \int_{x_1>\cdots>x_k>A} \frac{1}{(2\pi)^{k/2}} e^{-||x||^{2}/2} \Phi\left((1+ C/\log q)x_k \right)^{n-k} dx_1 \cdots dx_k \nonumber \\
& = & \int_{x_k > A} \frac{e^{-x_{k}^{2}/2}}{\sqrt{2\pi}} \Phi\left((1+C/\log q)x_k \right)^{n-k} \frac{1}{(k-1)!} \left(\int_{x > x_k} \frac{e^{-x^{2}/2}}{\sqrt{2\pi}} dx \right)^{k-1} dx_k, \nonumber
\end{eqnarray}
by symmetry of the integration variables $x_1, \dots, x_{k-1}$.
Making a substitution shows this is
\begin{eqnarray}
& = & \left(1+O\left(\frac{1}{\log q}\right)\right) \int_{x_k > (1+\frac{C}{\log q}) A} \frac{e^{-(1+O(1/\log q))x_k^{2}/2}}{\sqrt{2\pi}} \Phi\left( x_k \right)^{n-k}  \nonumber \\
&&  \quad \quad \quad \quad \quad \quad\quad \quad \quad \quad \times \frac{1}{(k-1)!} \left(\int_{x > (1+C/\log q)^{-1} x_k} \frac{e^{-x^{2}/2}}{\sqrt{2\pi}} dx \right)^{k-1} dx_k \nonumber \\
& = & \left(1+O\left(\frac{1}{\log q}\right)\right) \int_{x_k > (1+\frac{C}{\log q})A} \frac{e^{-x_k^{2}/2}}{\sqrt{2\pi}} \Phi\left( x_k \right)^{n-k} \cdot \frac{e^{O(k x_k^{2}/\log q)}}{(k-1)!} \left(\int_{x > x_k} \frac{e^{-x^{2}/2}}{\sqrt{2\pi}} dx \right)^{k-1} dx_k , \nonumber
\end{eqnarray}
and here the term $e^{O(k x_k^{2}/\log q)}$ is $e^{O(k \log n/\log q)} = 1+O(\frac{k \log n}{\log q})$ provided $x_k \leq 3\sqrt{\log n}$, say. Moreover (and as seen before), the contribution to the integral from the complementary range $x_k > 3\sqrt{\log n}$ is $\ll n^{-2k}$, since in this case we have $||x||\geq 3\sqrt{k\log n}$. Hence, our integral equals
\begin{align*}
&\left(1+O\left(\frac{k \log n}{\log q}\right)\right)\int_{x_k > (1+\frac{C}{\log q})A} \frac{e^{-x_k^{2}/2}}{\sqrt{2\pi}} \Phi\left( x_k \right)^{n-k} \cdot \frac{1}{(k-1)!} \left(\int_{x > x_k} \frac{e^{-x^{2}/2}}{\sqrt{2\pi}} dx \right)^{k-1} dx_k + O(n^{-2k})\\
&=\left(1+O\left(\frac{k \log n}{\log q}\right)\right)\int_{x_1>\cdots>x_k>(1+\frac{C}{\log q})A} \frac{1}{(2\pi)^{k/2}} e^{-||x||^{2}/2} \Phi\left(x_k \right)^{n-k} dx_1\cdots  dx_k + O(n^{-2k})\\
&= \left(1+O\left(\frac{k \log n}{\log q}\right)\right)\cdot \p\left(\tilde{Z}_{1} > \tilde{Z}_{2} > ... > \tilde{Z}_{k} > \max_{k+1 \leq i \leq n} \tilde{Z}_{i}, \; \textrm{and} \; \tilde{Z}_{k} > (1+\frac{C}{\log q})A\right) + O(n^{-2k})
\end{align*}
where the $\tilde{Z}_{i}$ are independent standard normal random variables. Furthermore, the same argument leading to \eqref{eq7.6}  shows that 
$$ \p\left( \max_{k+1 \leq i \leq n} \tilde{Z}_{i}< (1+\frac{C}{\log q})A\right) \ll n^{-2k}. $$
Finally, the asymptotic \eqref{MainIntProb}  follows from combining the above estimates, and using that the probability $\p(\tilde{Z}_{1} > \tilde{Z}_{2} > ... > \tilde{Z}_{k} > \max_{k+1 \leq i \leq n} \tilde{Z}_{i})$ equals  $(n-k)!/n!$ by symmetry of the random variables $\tilde{Z}_{1},  \tilde{Z}_{2}, \dots, \tilde{Z}_{n}$.

\end{proof}

\section{Irregularities in the densities: Proof of Theorem \ref{OrderFirstk2}}\label{secdeviation}
As in the previous section, given distinct reduced residues $a_1, \dots, a_n$ mod $q$ we let $Z=(Z_{j})_{1 \leq j \leq n}$ be a multivariate normal random vector whose components have mean zero, variance one, and correlations 
$r_{i,j}=\E Z_{i}Z_{j} := \frac{B_{q}(a_i,a_j)}{\var(q)}.$
Also, we let $2 \leq k\leq n$ be a fixed positive integer and let $f(x_1, \dots, x_k)$ denote the density function of $Z_1, \dots, Z_k$. We denote by  $\mathcal{C}=(r_{i,j})_{1\leq i,j\leq k}$ the covariance matrix of $Z_1, \dots, Z_k$ (which is certainly invertible provided $q$ is large enough in terms of $k$), and let $\widetilde{r}_{i,j}$ denote the entries of the inverse matrix $\mathcal{C}^{-1}$.

The key ingredient in the proof of Theorem \ref{OrderFirstk2} is the following result.

\begin{lem}\label{NonStandardDensity}
Let $k\geq 2$ be fixed, and let $q$ be large enough in terms of $k$. There exist distinct reduced residues $a_1, \dots, a_k$ modulo $q$ such that 
\begin{align*}
&f(x_1, x_2, \dots, x_k)\\
&= \left(1+O_k\left(\frac{1}{(\log q)^2}\right)\right)\frac{1}{(2\pi)^{k/2}}\exp\left(-\frac{||x||^2}{2}-x_1x_2\frac{\log 2+o(1)}{\log q} +O_k\left(\frac{||x||^2}{(\log q)^2}\right)\right) ,
\end{align*}
where the $o(1)$ term tends to zero as $q \rightarrow \infty$.
\end{lem}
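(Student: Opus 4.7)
The strategy is to choose the residues $a_1,\dots,a_k$ so that the covariance matrix $\mathcal{C}$ is essentially the $k\times k$ identity, except for a single substantial off-diagonal entry at positions $(1,2)$ and $(2,1)$, coming from the exceptional correlation $B_q(1,-1) = -(\log 2)\varphi(q) + O((\log q)^2)$ supplied by \eqref{LargeCov}. I would take $a_1=1$ and $a_2=q-1$, so that by \eqref{LargeCov} and \eqref{AsympVariance},
\[
r_{1,2} = -\frac{\log 2 + o(1)}{\log q}.
\]
For $3 \leq j \leq k$ I would set $a_j = 2 p_j$ where $p_3 < p_4 < \cdots < p_k$ are distinct odd primes not dividing $q$ (possible since $k$ is fixed and $q$ is large in terms of $k$). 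Then for any pair $(i,j)$ with $\{i,j\}\neq\{1,2\}$, the absolute values $|a_i|$ and $|a_j|$ are distinct, and the quotient $|a_j|/|a_i|$ (after arranging $|a_i|<|a_j|$) is either an integer of the form $2p_j$ (two distinct prime factors) or a non-integer rational $p_j/p_i$, hence never a prime power. So \eqref{SmallCov} and \eqref{AsympVariance} yield $|r_{i,j}| \ll_k (\log q)/\varphi(q)$ for all such pairs.

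Next I would analyse $\mathcal{C}$. Since all off-diagonal entries are $\ll 1/\log q$, Lemma \ref{matrixlem} applies with $\epsilon\asymp 1/\log q$, and because $\sum_{i\neq j}|r_{i,j}| = 2|r_{1,2}| + O_k(1/\log q) = O(1/\log q)$, it gives
\[
\det(\mathcal{C}) = 1 + O_k\Big(\tfrac{1}{(\log q)^2}\Big),\qquad \widetilde{r}_{i,i} = 1 + O_k\Big(\tfrac{1}{(\log q)^2}\Big),
\]
and $|\widetilde{r}_{i,j}|\ll_k (\log q)/\varphi(q)$ for the off-diagonal pairs with $\{i,j\}\neq\{1,2\}$ (the $|r_{i,j}|$ term in the lemma's bound dominates). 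For the one remaining entry $\widetilde{r}_{1,2}$ I need a precise asymptotic rather than an absolute-value bound, so I would write $\mathcal{C} = I + E$ (with $E_{1,2}=E_{2,1}=r_{1,2}$ and all other entries of $E$ of size $O_k(\log q/\varphi(q))$) and use the Neumann series $\mathcal{C}^{-1} = I - E + E^2 - \cdots$. The linear term contributes $-r_{1,2}$ to the $(1,2)$ entry, while $E^j$ for $j\geq 2$ contributes only the dominating path $1\to 2\to 1 \to \cdots \to 2$ of odd length (plus strictly smaller terms), hence $O(r_{1,2}^j) = O(1/(\log q)^j)$ at position $(1,2)$. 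Summing,
\[
\widetilde{r}_{1,2} = -r_{1,2} + O_k\Big(\tfrac{1}{(\log q)^3}\Big) = \frac{\log 2 + o(1)}{\log q}.
\]

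The remainder is bookkeeping in the Gaussian density formula. Writing
\[
x^{T}\mathcal{C}^{-1}x = \|x\|^2 + 2\widetilde{r}_{1,2}\,x_1 x_2 + \sum_{i}(\widetilde{r}_{i,i}-1)x_i^2 + 2\sum_{\substack{i<j\\ \{i,j\}\neq\{1,2\}}}\widetilde{r}_{i,j}\,x_i x_j,
\]
the diagonal correction is $O_k(\|x\|^2/(\log q)^2)$ and, using $|x_ix_j|\leq \|x\|^2$, the non-$(1,2)$ off-diagonal sum contributes $O_k(k^2 (\log q)/\varphi(q) \cdot \|x\|^2) = O_k(\|x\|^2/(\log q)^2)$. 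The $(1,2)$ term equals $2 x_1 x_2 (\log 2 + o(1))/\log q$ with an error absorbed in the same $O_k(\|x\|^2/(\log q)^2)$. Substituting into
\[
f(x) = (2\pi)^{-k/2}(\det\mathcal{C})^{-1/2}\exp\Big({-}\tfrac12 x^{T}\mathcal{C}^{-1}x\Big),
\]
and absorbing the determinant correction into the $(1+O_k(1/(\log q)^2))$ prefactor, yields exactly the claimed formula. The only slightly delicate point is the asymptotic identification of $\widetilde{r}_{1,2}$, since Lemma \ref{matrixlem} supplies only an absolute-value bound there; this is precisely why the brief Neumann series calculation above is required.
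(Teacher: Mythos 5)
Your approach mirrors the paper's closely: you take $a_1=1$, $a_2=-1$ to make $r_{1,2}=-(\log 2 + o(1))/\log q$ via \eqref{LargeCov}, choose the remaining residues so that \eqref{SmallCov} makes all other correlations tiny, invert $\mathcal{C}$, and assemble the density. Two points are worth flagging.

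First, a genuine (if minor) gap in your choice of residues: for $3\le j\le k$ you set $a_j = 2p_j$, which is a reduced residue mod $q$ only when $q$ is odd. When $q$ is even this construction fails outright. The paper avoids this by letting $p_1 < p_2$ be the two smallest primes coprime to $q$ and taking $a_j = (p_1 p_2)^j$, which also ensures the quotients $a_j/a_i$ and $a_j/a_1$, $a_j/a_2$ are never prime powers. Your construction would work if you replaced $2$ with the smallest prime coprime to $q$, but as written it needs this repair.

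Second, where your argument genuinely differs: you obtain the asymptotic $\widetilde r_{1,2} = (\log 2 + o(1))/\log q$ from a Neumann series $\mathcal{C}^{-1} = I - E + E^2 - \cdots$, whereas the paper uses the cofactor formula $\widetilde r_{2,1} = -\det(\mathcal{A})/\det(\mathcal{C})$ and estimates the cofactor determinant by the same permutation-expansion argument as in Lemma \ref{matrixlem}. Both are valid; the Neumann series is perhaps more transparent and gives a slightly cleaner picture of why only the alternating $1\to 2\to 1\to\cdots\to 2$ paths matter, though it requires a short verification that the tail of the series (paths touching a vertex $\neq 1,2$) is negligible.

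One incorrect intermediate claim, harmless to your conclusion: you write that for $\{i,j\}\neq\{1,2\}$ the bound from Lemma \ref{matrixlem} is dominated by the $|r_{i,j}|$ term, giving $\widetilde r_{i,j} \ll_k (\log q)/\varphi(q)$. In fact the $\epsilon^2 \sum_{l\neq m}|r_{l,m}|$ term dominates: with $\epsilon \asymp 1/\log q$ and $\sum_{l\neq m}|r_{l,m}| \asymp 1/\log q$ (driven by $|r_{1,2}|$), this contributes $\asymp 1/(\log q)^3$, which is far larger than $(\log q)/\varphi(q)$. The final bound $\widetilde r_{i,j} \ll_k 1/(\log q)^2$ that you actually need still holds, so the density computation goes through, but the reasoning offered for that step is wrong.
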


\begin{proof}
Let $p_1<p_2$ be the smallest prime numbers such that $(p_1p_2, q)=1$. Then one has
$p_1<p_2\leq 2\log q$ in view of the fact that $\prod_{p\leq z} p= e^{z(1+o(1))}$, which follows
from the prime number theorem. Mimicking a construction used in Theorem 2 of Lamzouri~\cite{La2}, we let $a_1=1$, $a_2=-1$ and $a_j= (p_1p_2)^j$ for $3\leq j\leq k$. Then by Lemma \ref{CovarianceEntries} and equation \eqref{AsympVariance}, it follows that 
\begin{equation}\label{LARGECor}
r_{1, 2}=r_{2,1}=-\frac{(\log 2)+o(1)}{\log q},
\end{equation}
and
\begin{equation}\label{SMALLCor}
r_{i,j}\ll \frac{(2\log q)^{2k+1}}{\varphi(q)},
\end{equation}
for all $i\neq j$ such that $\{i,j\}\neq \{1,2\}$. Now, by \eqref{DefDensity} we have 
$$f(x_1, \dots, x_k)=  \left(1+O_k\left(\frac{1}{(\log q)^2}\right)\right)\frac{1}{(2\pi)^{k/2}}\exp\left(-\frac12 \mathbf{x}^T \mathcal{C}^{-1} \mathbf{x}\right).$$
Recall that $\widetilde{r}_{i,i}=1+O_k(1/(\log q)^2)$ by \eqref{estimateR}. Furthermore, by \eqref{estimateR} and \eqref{SMALLCor} we have 
$$\widetilde{r}_{i,j}\ll_k \frac{1}{(\log q)^2},$$
for all $i\neq j$ such that $\{i, j\}\neq \{1, 2\}.$ This implies
\begin{equation}\label{BiasedDensity}
\begin{aligned}
&f(x_1, \dots, x_k)\\
&=  \left(1+O_k\left(\frac{1}{(\log q)^2}\right)\right)\frac{1}{(2\pi)^{k/2}}\exp\left(-\frac{||x||^2}{2}-x_1x_2 \frac{ \widetilde{r}_{2,1}+ \widetilde{r}_{1,2}}{2}+O_k\left(\frac{||x||^2}{(\log q)^2}\right)\right).
\end{aligned}
\end{equation}

Now, let $\mathcal{A}$ be the matrix obtained from $\mathcal{C}$ by removing the first row and the second column. Then arguing as in the proof of Lemma \ref{matrixlem}, we obtain
\begin{eqnarray}
\widetilde{r}_{2,1}= -\frac{\det(\mathcal{A})}{\det(\mathcal{C})} & = &- \left(1+O_k\left(\frac{1}{(\log q)^2}\right)\right)\det(\mathcal{A}) \nonumber \\
& = & - \left(1+O_k\left(\frac{1}{(\log q)^2}\right)\right) \left(r_{2,1}+ O_k\left(\frac{(2\log q)^{2k+1}}{\varphi(q)}\right) \right) . \nonumber
\end{eqnarray}
Thus by \eqref{LARGECor} we obtain
$ \widetilde{r}_{2,1}=\frac{\log 2+o(1)}{\log q},$ and
a similar estimate holds for $\widetilde{r}_{1,2}$. Inserting these estimates in \eqref{BiasedDensity} completes the proof. 
\end{proof}

\begin{proof}[Proof of Theorem \ref{OrderFirstk2}]
By the same argument that was used in the Introduction to deduce Theorem \ref{BigBias} from Theorem \ref{OrderFirstk2}, it will suffice to prove Theorem \ref{OrderFirstk2} in the case $k=2$, since this implies the result for larger $k$. (Using Lemma \ref{NonStandardDensity} one could in fact prove the result directly for any fixed $k$, but this would be more complicated and require an unwanted additional assumption of the shape $n < \varphi(q)^{c/k}$ at the end.)

Let $a_1 = 1$ and $a_2 = -1$ be as in Lemma \ref{NonStandardDensity}, and let $a_{3}, \dots, a_n$ be distinct reduced residues modulo $q$ that are different from $a_1, a_2$. As in the proof of Theorem \ref{OrderFirstk1}, Normal Approximation Result 1 implies that 
$$
 \big |\delta_2(q; a_1,\dots,a_n)- \p(Z_{1} > Z_{2} > \max_{3 \leq i \leq n} Z_{i})\big| \ll \frac{n^{3}}{\varphi(q)^{1/8}} .
$$

Next, we have
\begin{eqnarray*}
& &\p(Z_{1} > Z_{2} > \max_{3 \leq i \leq n} Z_{i})  \\
& = & \p(Z_{1} > Z_{2} > \max_{3 \leq i \leq n} Z_{i}, \; \textrm{and} \; Z_{2} > \sqrt{1.99\log n}) + O(\p(\max_{3 \leq i \leq n} Z_{i} \leq \sqrt{1.99\log n})), 
\end{eqnarray*}
and using Normal Comparison Result 2 we obtain
\begin{equation}\label{boundtailproba}
 \p(\max_{3 \leq i \leq n} Z_{i} \leq \sqrt{1.99\log n}) \ll e^{-n^{c_1}} + n^{-c_2\log q} \ll n^{-4},
 \end{equation}
for some positive constants $c_1, c_2$. Moreover, similarly to \eqref{ConditionalProbasEstimates} we derive
\begin{equation}\label{CutFinitek}
\begin{aligned}
& \p(Z_{1} > Z_{2} > \max_{3 \leq i \leq n} Z_{i}, \; \textrm{and} \; Z_{2} > \sqrt{1.99\log n})\\
= & \int_{\substack{x_1>x_2>\sqrt{1.99\log n}\\ ||x||<3\sqrt{2\log n}}}f(x_1, x_2) \cdot \p\Big(\max_{3\leq i \leq n} Z_i <x_2 \big| Z_1=x_1, Z_2=x_2\Big)dx_1 dx_2 + O\left(n^{-4}\right).\\
\end{aligned}
\end{equation}

Next, it follows from Lemma \ref{lemfirstkPNR} that for all $x= (x_1,x_2)$ such that $x_1>x_2>\sqrt{1.99\log n}$ and $||x||<3\sqrt{2\log n}$ we have
\begin{equation}\label{approxcondi3}
\begin{aligned}
&\p(\max_{3 \leq i \leq n} Z_{i} \leq x_2 | Z_{1}=x_1 , Z_{2} = x_2)
= \prod_{i=3}^{n} \Phi\left(w_i \right)\\
& \quad \quad \quad + O\left(\frac{1}{n^{9/10}\log q} \left(\prod_{i=3}^{n} \Phi\left((1+O(1/\log q))w_i \right) + n^{-4} \right) \right), 
\end{aligned}
\end{equation}
where $w_i= (1+O(1/\log^2q)) x_2+ O(||x||\sum_{s=1}^2 |r_{i, s}|)$. Then, similarly to \eqref{eq7.4} one gets 
\begin{align*}
 \prod_{i=3}^{n} \Phi\left(w_i \right)
 &= \prod_{i=3}^{n} \Phi\left((1+O(1/\log^2q)) x_2\right)
 \left(1 + O\left(||x|| \cdot e^{-\big(1/2+O(\sqrt{1/\log q})\big)x_2^2}\sum_{i=3}^n \sum_{s=1}^2 |r_{i,s}| \right) \right)\\
 &=  \Phi\left((1+O(1/\log^2q)) x_2\right)^{n-2} \left(1+O\left(\frac{1}{n^{2/5}\log q}\right)\right),
 \end{align*}
 by Correlation Estimate 1 and the fact that $x_2 > \sqrt{1.99\log n}$. Moreover, in the same range for the $x_i$ we deduce from Lemma \ref{NonStandardDensity} that
\begin{align*}
 f(x_1, x_2) &= \left(1+O\left(\frac{\log n}{(\log q)^2}\right)\right)\frac{1}{(2\pi)^{2/2}}\exp\left(-\frac{||x||^2}{2}-x_1x_2\frac{\log 2+o(1)}{\log q}\right)\\
 &\leq \left(1-\frac{c\log n}{\log q}\right) \frac{1}{2\pi}e^{-||x||^2/2}
\end{align*}
for some positive constant $c>0$, provided $q$ is large enough. 

Combining the above estimates, and using our assumption that $n \geq \varphi(q)^{\epsilon}$, we deduce that the main term in \eqref{CutFinitek} equals
\begin{align*}
 &\int_{\substack{x_1> x_2>\sqrt{1.99\log n}\\ ||x||<3\sqrt{2\log n}}}f(x_1, x_2) \cdot  \prod_{i=3}^{n} \Phi\left(w_i \right)dx_1 dx_2\\
 & \leq \left(1-c_{\epsilon}\right) \int_{\substack{x_1>x_2>\sqrt{1.99\log n}\\ ||x||<3\sqrt{2\log n}}}\frac{1}{2\pi}e^{-||x||^2/2} \Phi\left((1+O(1/\log^2q)) x_2\right)^{n-2}dx_1 dx_2,
\end{align*}
for some positive constant $c_{\epsilon}$.
Furthermore, using an exactly similar argument as in the proof of \eqref{MainIntProb}, together with \eqref{boundtailproba}, we derive
\begin{align*}
 &\int_{\substack{x_1>x_2>\sqrt{1.99\log n}\\ ||x||<3\sqrt{2\log n}}}\frac{1}{2\pi}e^{-||x||^2/2} \Phi\left((1+O(1/\log^2q)) x_2\right)^{n-2}dx_1 dx_2 \\
 &= \left(1+O\left(\frac{\log n}{\log^2 q}\right)\right) \frac{(n-2)!}{n!}= \left(1+O\left(\frac{1}{\log q}\right)\right) \frac{(n-2)!}{n!}.
\end{align*}
Finally, the total contribution of the various error terms (notably the one from Normal Approximation Result 1 and the one from \eqref{approxcondi3}) is 
$$ \ll \frac{n^3}{\varphi(q)^{1/8}} + n^{-4} + \frac{1}{n^{0.9}\log q} \frac{(n-2)!}{n!}. $$
Recalling our assumption that $\varphi(q)^{\epsilon} \leq n < \varphi(q)^{1/41}$, we see the error is negligible compared with $\frac{(n-2)!}{n!}$, which completes the proof. 

\end{proof}

\appendix

\section{Sketch proof of Lemma \ref{harmanal}}\label{analysisappendix}
In this appendix we sketch the proof of Lemma \ref{harmanal}, the harmonic analysis lemma that we used to prove Correlation Estimate 1. Lemma \ref{harmanal} is inspired by a result in work of Bourgain~\cite{bourgain} (in a substantially different context), but differs in that it is concerned with two sets of points $\theta_{r}$ and $\phi_{s}$ rather than one, and it involves a weight $\Lambda(q)/q$ rather than $1/q$. It turns out that the former adaptation is easy, and the latter one simplifies and strengthens the argument (since all work with divisor functions becomes much easier). For the sake of completeness, we provide a fairly full sketch proof of Lemma \ref{harmanal} here.

Throughout we let $\textbf{1}$ denote the indicator function, and let $|| \cdot ||$ denote distance mod 1. Recall that we are given two sets $\theta_{1}, ..., \theta_{R}$ and $\phi_{1}, ..., \phi_{S}$ of $1/x$-spaced real numbers. Corresponding to these, let $I_{\theta}, I_{\phi} : \R/\Z \rightarrow [0,10]$ be bounded variation continuous functions on the real numbers mod 1, that satisfy
$$ I_{\theta}(t) \geq \sum_{1 \leq r \leq R} \textbf{1}_{|| t - \theta_{r}|| \leq 1/x} \;\;\; \text{and} \;\;\; I_{\phi}(t) \geq \sum_{1 \leq s \leq S} \textbf{1}_{|| t - (-\phi_{s})|| \leq 1/x} , $$
(note the negative signs attached to the $\phi_{s}$ here), and also
$$ \int_{0}^{1} I_{\theta}(t) dt \leq \frac{1000R}{x}, \;\;\; \int_{0}^{1} I_{\phi}(t) dt \leq \frac{1000S}{x}, \;\;\; \text{supp}(\hat{I_{\theta}}), \; \text{supp}(\hat{I_{\phi}}) \subseteq [-x,x] , $$
where $\hat{I_{\theta}}(k) := \int_{0}^{1} I_{\theta}(t) e^{-2\pi i tk} dt$ and $\hat{I_{\phi}}(k)$ (for $k \in \Z$) denote the Fourier transforms of $I_{\theta}, I_{\phi}$. It is a standard fact that one can construct such functions $I$ (e.g. as a sum of Beurling--Selberg type smooth functions that approximate $\textbf{1}_{||t - \theta_{r}|| \leq 1/x}$ and $\textbf{1}_{||t - (-\phi_{s})|| \leq 1/x}$), and we see immediately that the convolution
$$ (I_{\theta} \ast I_{\phi})(t) := \int_{0}^{1} I_{\theta}(u) I_{\phi}(t-u) du \geq \sum_{\substack{1 \leq r \leq R, \\ 1 \leq s \leq S}} \int_{0}^{1} \textbf{1}_{||u-\theta_{r}|| \leq \frac{1}{x}} \textbf{1}_{||t-u+\phi_{s}|| \leq \frac{1}{x}} du \geq \frac{1}{x} \sum_{\substack{1 \leq r \leq R, \\ 1 \leq s \leq S}} \textbf{1}_{||t - (\theta_{r} - \phi_{s})|| \leq \frac{1}{x}} . $$

Consequently, in Lemma \ref{harmanal} we have
$$ \sum_{\substack{1 \leq r \leq R, \\ 1 \leq s \leq S}} G(\theta_{r}-\phi_{s}) = \sum_{q \leq Q} \frac{\Lambda(q)}{q} \sum_{a=0}^{q-1} \sum_{\substack{1 \leq r \leq R, \\ 1 \leq s \leq S}} \textbf{1}_{||(\theta_{r} - \phi_{s}) - a/q|| \leq \frac{1}{x}} \leq x \sum_{q \leq Q} \frac{\Lambda(q)}{q} \sum_{a=0}^{q-1} (I_{\theta} \ast I_{\phi})(a/q) , $$
and on writing $(I_{\theta} \ast I_{\phi})(a/q)$ in terms of its Fourier coefficients we find
$$ \sum_{a=0}^{q-1} (I_{\theta} \ast I_{\phi})(a/q) = \sum_{a=0}^{q-1} \sum_{k=-\infty}^{\infty} \widehat{(I_{\theta} \ast I_{\phi})}(k) e^{2\pi i ka/q} = q \sum_{\substack{k=-\infty, \\ q \mid k}}^{\infty} \widehat{(I_{\theta} \ast I_{\phi})}(k) . $$
Using also the fact that $\widehat{(I_{\theta} \ast I_{\phi})}(k) = \widehat{I_{\theta}}(k) \widehat{I_{\phi}}(k)$, we conclude overall that
$$ \sum_{\substack{1 \leq r \leq R, \\ 1 \leq s \leq S}} G(\theta_{r}-\phi_{s})  \leq x \sum_{q \leq Q} \Lambda(q) \sum_{\substack{k=-\infty, \\ q \mid k}}^{\infty} \widehat{I_{\theta}}(k) \widehat{I_{\phi}}(k) \leq x \sum_{k=-\infty}^{\infty} \sum_{\substack{q \leq Q, \\ q \mid k}} \Lambda(q) |\widehat{I_{\theta}}(k)| |\widehat{I_{\phi}}(k)| . $$

To finish, define $\mathcal{B} := \{-x \leq k \leq x : k \neq 0, \; \sum_{\substack{q \leq Q, \\ q \mid k}} \Lambda(q) \geq \log^{2}(2QRS)\}$, and note that since $\hat{I_{\theta}}, \hat{I_{\phi}}$ are supposed to vanish outside the interval $[-x,x]$, and since we always have $\sum_{\substack{q \leq Q, \\ q \mid k}} \Lambda(q) \leq \sum_{q \leq Q} \Lambda(q) \ll Q$ and also $|\widehat{I_{\theta}}(k)| \leq \int_{0}^{1} I_{\theta}(t) dt \leq \frac{1000R}{x}$ and $|\widehat{I_{\phi}}(k)| \leq \frac{1000S}{x}$, the right hand side above is
\begin{eqnarray}
& \ll & \log^{2}(2QRS) x \sum_{-x \leq k \leq x} |\widehat{I_{\theta}}(k)| |\widehat{I_{\phi}}(k)| + Qx |\widehat{I_{\theta}}(0)| |\widehat{I_{\phi}}(0)| + Q x \sum_{k \in \mathcal{B}} |\widehat{I_{\theta}}(k)| |\widehat{I_{\phi}}(k)| \nonumber \\
& \ll & \log^{2}(2QRS) x \sqrt{ \sum_{k} |\widehat{I_{\theta}}(k)|^2 } \sqrt{ \sum_{k} |\widehat{I_{\phi}}(k)|^2 } + \frac{RSQ}{x} + \frac{RSQ}{x} \#\mathcal{B} . \nonumber
\end{eqnarray}
The second term above is acceptable for Lemma \ref{harmanal}. By Parseval's Identity we have $$\sqrt{ \sum_{k} |\widehat{I_{\theta}}(k)|^2 } \sqrt{ \sum_{k} |\widehat{I_{\phi}}(k)|^2 } \ll \sqrt{\int_{0}^{1} I_{\theta}(t)^2 dt} \sqrt{\int_{0}^{1} I_{\phi}(t)^2 dt} \ll \frac{\sqrt{RS}}{x}$$, and so the first term above is also acceptable. Now since we always have $\sum_{\substack{q \leq Q, \\ q \mid k}} \Lambda(q) \leq \sum_{q \mid k} \Lambda(q) = \log |k|$ provided $k \neq 0$, there can be no elements of $\mathcal{B}$ with modulus less than $\exp\{\log^{2}(2QRS)\}$. Moreover, if $\exp\{\log^{2}(2QRS)\} \leq k \leq x$ and $\sum_{\substack{q \leq Q, \\ q \mid k}} \Lambda(q) \geq \log^{2}(2QRS)$ then we must be able to write $k = nm$, where $n$ is {\em $Q$ smooth} (i.e. all the prime factors of $n$ are at most $Q$) and also $n \geq \exp\{\log^{2}(2QRS)\}$. Therefore we have
$$ \#\mathcal{B} \leq 2 \sum_{\substack{\exp\{\log^{2}(2QRS)\} \leq n \leq x, \\ n \; \text{is} \; Q \; \text{smooth}}} \frac{x}{n} , $$
and standard upper bounds for the counting function of smooth numbers (see e.g. Theorem 7.6 of Montgomery and Vaughan~\cite{mv}) imply the right hand side is $\ll x/(QRS)^{10}$. It follows that $\frac{RSQ}{x} \#\mathcal{B} \ll 1/(QRS)^{9} \leq 1$, which is certainly an acceptable contribution for Lemma \ref{harmanal}.
\qed

\section{Sketch proof of Lemma \ref{reinrollnormal}}\label{normapproxappend}
In this appendix we very briefly indicate how to deduce Lemma \ref{reinrollnormal} from Theorem 2.1 of Reinert and R\"{o}llin~\cite{rr}.

Indeed, the exchangeable pair construction and calculations used to deduce Central Limit Theorem 1, in an appendix of the preprint~\cite{harpertypicalmax}, transfer directly to this situation and imply that
$$ |\E h(W) - \E h(Z)| \ll |h|_2 \sum_{a,b \in \mathcal{A}} \sqrt{\sum_{i=1}^{m} |c_{i}(a)|^{2}|c_{i}(b)|^{2} \E|V_{i}|^{4}} + |h|_3 \sum_{i=1}^{m} \E|V_{i}|^{3} \left(\sum_{a \in \mathcal{A}} |c_{i}(a)| \right)^{3} . $$
In Lemma \ref{reinrollnormal} we assume the uniform fourth moment bound $\E|V_{i}|^4 \leq K^{4}/m^{2}$, and so the first term is
$$ \ll \frac{|h|_2 K^{2}}{m} \sum_{a,b \in \mathcal{A}} \sqrt{\sum_{i=1}^{m} |c_{i}(a)|^{2}|c_{i}(b)|^{2}} , $$
which is acceptable for Lemma \ref{reinrollnormal}. We also note that
$$ \E|V_{i}|^{3} \leq \frac{K^{3}}{m^{3/2}} + \E|V_{i}|^{3} \textbf{1}_{|V_i| > K/\sqrt{m}} \leq \frac{K^{3}}{m^{3/2}} + \frac{\sqrt{m}}{K} \E|V_{i}|^{4} \leq 2\frac{K^{3}}{m^{3/2}}, $$
and therefore the second term above is
$$ \ll \frac{|h|_3 K^{3}}{m^{3/2}} \sum_{i=1}^{m} \left(\sum_{a \in \mathcal{A}} |c_{i}(a)| \right)^{3} , $$
as required for Lemma \ref{reinrollnormal}.
\qed

\end{document}